\renewcommand{\epsilon}{\varepsilon} 
\renewcommand{\phi}{\varphi} 
\def\ge{\geqslant}
\def\le{\leqslant}
\def\ol{\overline}
\def\kratno{\lower.5ex\hbox{$\,\vdots\,$}}
\def\eps{\epsilon}
\def\q#1.{\smallbreak\noindent\hskip15pt{\bf#1.}\enspace\ignorespaces} 
\def\dotline{\smallskip\hbox to \hsize{\dotfill}\medskip}
\def\norm[#1]{\left\| #1 \right\|}
\newcommand{\R}{\mathbb{R}}
\newcommand{\T}{\mathbb{T}}
\newcommand{\D}{\mathbb{D}}
\newcommand{\Cm}{\mathbb{C}}
\newcommand{\ilim}{\int\limits}
\newcommand{\slim}{\sum\limits}
\renewcommand{\Im}{\mathop{\mathrm{Im}}\nolimits}
\renewcommand{\Re}{\mathop{\mathrm{Re}}\nolimits}
\newcommand{\loc}{\mathop{\mathrm{loc}}\nolimits}
\theoremstyle{plain}
\newtheorem{thm}{Theorem} 
\newtheorem{knownthm}{Theorem} 
\newtheorem{corol}{Corollary}
\newtheorem*{lm}{Lemma}
\newtheorem{lemma}{Lemma}
\newtheorem{knownlemma}{Lemma} 
\newtheorem*{st}{Statement}
\newtheorem{conj}{Conjecture}
\newtheorem*{thrm}{Theorem}
\theoremstyle{definition}
\newtheorem*{defn}{Definition}
\theoremstyle{remark}
\newtheorem*{rem}{Remark}
\begin{document}
\newgeometry{top=20mm,bottom=20mm,left=35mm,right=20mm}
\renewcommand{\theknownthm}{\Alph{knownthm}}
\renewcommand{\theconj}{\Roman{conj}}
\renewcommand{\theknownlemma}{\Alph{knownlemma}}


\title{Mate-Nevai-Totik theorem for Krein systems}

\date{}
\author{Pavel Gubkin}
\address{
\begin{flushleft}
Pavel Gubkin: pasha\_gubkin\_v@mail.ru\\\vspace{0.1cm}
Leonhard Euler International Mathematical Institute\\
14th Line 29B, Vasilyevsky Island, St. Petersburg, 199178, Russia
\\\vspace{0.1cm}
St. Petersburg State University \\
Universitetskaya nab. 7-9, St. Petersburg, 199034, Russia
\end{flushleft}
}
\thanks{The work is supported by Ministry of Science and Higher Education of the Russian Federation, agreement № 075–15–2019–1619.}

\subjclass[2010]{34L40, 42C05}
\keywords{Krein system, Dirac operator, orthogonal polynomials, Szeg\H{o} class}
\maketitle

\begin{abstract}
    We prove the Ces\`aro boundedness of eigenfunctions of the Dirac operator on the half-line with a square-summable potential. The proof is based on the theory of Krein systems and, in particular, on the continuous version of a theorem by A. Mate, P. Nevai and V. Totik from 1991. 
\end{abstract}
\section{Introduction}
\subsection{Main result}
Consider  a symmetric zero trace $2\times 2$ potential $Q = \left(\begin{smallmatrix}-q&p\\p&q\end{smallmatrix}\right)$ on $\R_+ = [0,+\infty)$ with real-valued entries. We say that $Q\in L^p$ if both functions $p,q\in L^p$. Consider the differential equation 
\begin{align}\label{differential_equation}
    Jf'(t) + Qf(t) &= \lambda f(t) ,\quad t\in\R_+,\quad f(0) = 
    \begin{pmatrix}
    1\\
    0
\end{pmatrix},
\end{align}
where $J = \left(\begin{smallmatrix}0&1\\-1&0\end{smallmatrix}\right)$ is a square root of the identity matrix. This equation can be read as eigenfunction equation for Dirac operator $\mathcal{D}_Q = J\frac{d}{dr} + Q$ on the half-line $\R_+$.

Motivated by scattering theory, M. Christ and A. Kiselev studied \cite{christ2002}, \cite{christ20012}, \cite{christ20011}  the one-dimensional Shr\"{o}dinger operator using a method of multilinear expansion; their results can as well be applied to Dirac-type operators. Among other things, they proved the existence of wave operators for the Dirac operator in the case when $Q\in L^p$, $1\le p < 2$, which implies that for such Q the absolutely continuous spectrum of $D_Q$ is the entire real line $\R$. Deift and Killip extended \cite{deift1999} the spectrum result for $Q\in L^2$. Using methods of the theory of Krein systems, Denisov showed \cite{denisov2004} the existence of wave operators in the case when $Q\in L^2$; recently Bessonov extended \cite{bessonov2020} this result for Dirac operators whose spectral measure belongs to the Szeg\H{o} class on the real line. Some related results and approaches can be found in \cite{muscalu2003}, \cite{muscalu20032}, \cite{simon1982}. 

M. Christ and A. Kiselev proved in Section $6.3$ in \cite{christ2002} that the solution of \eqref{differential_equation} is bounded for almost every $\lambda\in\R$ in the case when $Q\in L^p$, $1\le p < 2$. Related question was studied by C. Muscalu, T. Tao, and C. Thiele in \cite{muscalu2003}; authors  posed the following conjecture. 

\begin{conj}\label{muscalu_conj}
Assume that $Q\in L^2$. Then for almost every $\lambda \in\R$ the solution $f$ of the differential equation \eqref{differential_equation} is bounded on $\R_+$.
\end{conj}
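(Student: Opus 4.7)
The plan is to exploit the correspondence between the Dirac equation \eqref{differential_equation} and a Krein system. Setting $P(r,\lambda) = f_1(r,\lambda)+if_2(r,\lambda)$ where $f=(f_1,f_2)^\top$, the components of \eqref{differential_equation} combine to give (after a standard rescaling) a Krein system
\begin{align*}
    \tfrac{d}{dr}P(r,\lambda) = i\lambda P(r,\lambda)-\overline{A(r)}\,P_*(r,\lambda),\qquad \tfrac{d}{dr}P_*(r,\lambda)=-A(r)\,P(r,\lambda),
\end{align*}
with coefficient $A\in L^2(\R_+)$ built from $p,q$, and with $|P(r,\lambda)|=|f(r,\lambda)|$. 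So it suffices to prove $\sup_{r\ge 0}|P(r,\lambda)|<\infty$ for almost every $\lambda\in\R$. By the Deift--Killip theorem, the spectral measure $\mu$ of this Krein system is purely absolutely continuous on $\R$ and its density $w$ satisfies the Szeg\H{o} condition $\int\log w(x)/(1+x^2)\,dx>-\infty$.

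The continuous Mate--Nevai--Totik theorem established in the body of this paper yields the Ces\`aro convergence
\begin{align*}
    \frac{1}{R}\int_0^R |P(r,\lambda)|^2\,dr\;\longrightarrow\;\frac{1}{w(\lambda)}\quad\text{as }R\to\infty,
\end{align*}
for almost every $\lambda\in\R$. This is only an averaged statement, so to reach Conjecture \ref{muscalu_conj} one must rule out sporadic spikes of $|P(r,\lambda)|$ between good scales of $r$. I would therefore pass to Pr\"ufer coordinates $P(r,\lambda)=R(r,\lambda)e^{i\varphi(r,\lambda)}$, in which the amplitude satisfies a first order ODE whose right-hand side is a bilinear expression in $A$ and $e^{-2i\varphi}$. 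The question reduces to showing that the oscillatory integral $\int_0^r A(s)\,e^{-2i\varphi(s,\lambda)}\,ds$ is bounded in $r$ for a.e.~$\lambda$.

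My strategy for this bound is to use the MNT-type Ces\`aro asymptotics together with the Krein transfer matrix estimates to linearise the phase, $\varphi(s,\lambda)=\lambda s+\psi(s,\lambda)$ with $\psi$ Ces\`aro-controlled, and then split the oscillatory integral into a Fourier part $\int A(s)e^{-2i\lambda s}\,ds$ and a remainder that can be absorbed. On dyadic intervals $[2^n,2^{n+1}]$ I would then apply a Menshov--Rademacher / Carleson--Hunt style maximal inequality to the Fourier part, viewed as a function of $\lambda$, and a Borel--Cantelli argument across $n$ to pin down the pointwise bound outside a null set of $\lambda$.

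The main obstacle --- precisely what has kept Conjecture \ref{muscalu_conj} open since \cite{muscalu2003} --- is the final pointwise estimate. The Menshov--Rademacher inequality would require the strictly stronger condition $\int|A(s)|^2(\log s)^2\,ds<\infty$, while a direct invocation of Carleson's theorem on a.e.~convergence of Fourier integrals is blocked by the fact that the phase $\varphi$ is not a free parameter but is itself determined by $A$ through the Krein system. A complete proof would therefore require either a new Carleson--Hunt type maximal estimate tailored to oscillatory integrals with $L^2$ amplitudes and nonlinear $A$-dependent phases, or genuine pointwise Szeg\H{o} asymptotics of the form $P(r,\lambda)=\Pi(\lambda)e^{i\lambda r+\psi(r,\lambda)}+o(1)$ with $\psi$ bounded, which is at present only available under stronger $L^1$-type hypotheses on $A$.
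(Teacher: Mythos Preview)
The statement you are attempting is Conjecture~\ref{muscalu_conj}, and the paper does \emph{not} prove it: the conjecture is presented as an open problem, and the paper establishes only the weaker, averaged version (Theorem~\ref{cesaro_boundness_for_dirac}), namely $L^2$-Ces\`aro boundedness of $f$. Your proposal reaches that averaged result correctly --- the reduction of \eqref{differential_equation} to a Krein system with $L^2$ coefficient, the Szeg\H{o}-class conclusion for the spectral measure (the paper obtains this from Theorem~11.1 of \cite{Denisov2009}; note that Deift--Killip does not give pure absolute continuity, only that the a.c.\ part fills $\R$, but this is immaterial here), and the invocation of the continuous Mate--Nevai--Totik theorem (Theorem~\ref{MNT_main_theorem}) are exactly the steps carried out in Section~\ref{proof of theorem 1}.

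Beyond that point your argument is not a proof, and you say so yourself. The attempt to upgrade Ces\`aro boundedness to pointwise boundedness via Pr\"ufer variables and a Carleson / Menshov--Rademacher maximal inequality runs into precisely the obstruction you identify: the phase $\varphi(s,\lambda)$ depends nonlinearly on the coefficient $A$ through the Krein system itself, so the oscillatory integral $\int_0^r A(s)e^{-2i\varphi(s,\lambda)}\,ds$ is not a linear Fourier integral in $\lambda$, and no available maximal theorem controls it under a bare $L^2$ hypothesis on $A$. This is the well-known ``nonlinear Carleson'' problem of \cite{muscalu2003}, \cite{muscalu20032}, and it remains open. In summary, your proposal correctly recovers what the paper actually proves (Theorem~\ref{cesaro_boundness_for_dirac}) and then gives an honest account of why the full Conjecture~\ref{muscalu_conj} is out of reach; there is no proof in the paper to compare the remainder against.
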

In present paper we prove an ``averaged'' version of Conjecture \ref{muscalu_conj}.
\begin{thm}\label{cesaro_boundness_for_dirac}
Assume that $Q\in L^2$. Then for almost every $\lambda \in\R$ the solution $f$ of the differential  equation \eqref{differential_equation} is bounded on $\R_+$ in $L^2$-Ces\`aro sense, i.e.,
\begin{align*}
    \sup_{r > 0}\left(\frac{1}{r}\int_0^r |f(t)|^2\, dt \right) < \infty.
\end{align*}
\end{thm}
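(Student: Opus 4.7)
My plan is to transfer the problem to the setting of Krein systems, which are the continuous analog of orthogonal polynomials on the unit circle. There is a standard correspondence that, to each symmetric zero-trace $2\times 2$ potential $Q \in L^2(\R_+)$, associates a Krein system
\[
    \frac{d}{dr}\begin{pmatrix} P(r, \lambda) \\ P_*(r, \lambda) \end{pmatrix} = \begin{pmatrix} i\lambda & -\overline{a(r)} \\ -a(r) & 0 \end{pmatrix} \begin{pmatrix} P(r, \lambda) \\ P_*(r, \lambda) \end{pmatrix}, \qquad P(0,\lambda)=P_*(0,\lambda)=1,
\]
whose coefficient $a \in L^2(\R_+)$ is built explicitly from $p$ and $q$. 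Under this correspondence the solution $f$ of \eqref{differential_equation} can be written as a fixed (unitary) linear combination of $P(\cdot,\lambda)$ and $P_*(\cdot,\lambda)$. A short direct computation with the system shows that for real $\lambda$ the quantity $|P(r,\lambda)|^2-|P_*(r,\lambda)|^2$ is independent of $r$ and hence identically zero by the initial conditions, so $|P(r,\lambda)|=|P_*(r,\lambda)|$ for all $r\ge 0$. Thus, to prove Theorem \ref{cesaro_boundness_for_dirac}, it suffices to establish
\[
    \sup_{r>0} \frac{1}{r}\int_0^r |P(t,\lambda)|^2\, dt < \infty \qquad \text{for a.e. } \lambda\in\R.
\]

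The main analytic input, and the reason for the title, is a continuous version of the Mate--Nevai--Totik theorem for Krein systems: if $a\in L^2(\R_+)$, then for almost every $\lambda\in\R$,
\[
    \lim_{r\to\infty}\frac{1}{r}\int_0^r |P(t,\lambda)|^2\, dt = \frac{1}{w(\lambda)},
\]
where $w$ is the density of the absolutely continuous part of the spectral measure of the Krein system. By the continuous Szeg\H{o} theorem for Krein systems, the assumption $a\in L^2$ places the spectral measure in the Szeg\H{o} class, so $w>0$ a.e.\ and the right-hand side above is a.e.\ finite. Combined with the reduction of the previous paragraph, this yields Theorem \ref{cesaro_boundness_for_dirac} at once.

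The bulk of the work therefore lies in proving this continuous Mate--Nevai--Totik theorem, which I view as the principal obstacle. I would follow the overall strategy of the original discrete proof, the key ingredients being: the continuous outer (Szeg\H{o}) function on the upper half-plane associated to $a$; a Christoffel--Darboux-type identity for Krein systems that rewrites the Ces\`aro mean $r^{-1}\int_0^r |P(t,\lambda)|^2\,dt$ in terms of boundary data; and an approximation argument passing from a dense, more tractable subclass (e.g., compactly supported coefficients $a$) to general $a\in L^2$ via a weak-type estimate and a dominated-convergence step. The essential technical obstruction, relative to the unit-circle setting of Mate--Nevai--Totik, is the unboundedness of the underlying spectrum: all of the Szeg\H{o}-class tools must be transported from the circle to the real line, and one has to control the Ces\`aro averages uniformly as $|\lambda|\to\infty$ in order for the approximation argument to close.
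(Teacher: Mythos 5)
Your proposal is correct and follows essentially the same route as the paper: transform the Dirac system into a Krein system with coefficient $a\in L^2$, observe via the Christoffel--Darboux identity that $|P(r,\lambda)|=|P_*(r,\lambda)|$ for real $\lambda$ so that $|f|^2$ is dominated (pointwise, up to the built-in rescaling $r\mapsto 2r$ and a phase) by $|P|^2$, invoke the $L^2\Rightarrow$ Szeg\H{o}-class result, and then apply the continuous Mate--Nevai--Totik theorem to get finiteness of the Ces\`aro averages a.e. One small remark: the linear combination taking $(P,P_*)$ to $(\phi,\psi)$ carries an $r$-dependent phase $e^{-i\lambda r}$ and an argument rescaling, so it is not a \emph{fixed} unitary map as stated, but this is irrelevant since only moduli are used; your closing sketch of how you would prove the MNT theorem itself (dense subclass plus weak-type estimate) also differs from the paper's argument (Fej\'er kernel and Lebesgue points for the upper bound, Paley--Wiener extremal functions and Hardy-space estimates for the lower bound), but that is a separate statement.
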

Proof of this theorem is based on the theory of Krein systems which plays a role of a bridge between the theory of orthogonal polynomials on the unit circle and spectral theory of second order differential self-adjoint operators. 
Krein systems  were first introduced by M. G. Krein in  \cite{Krein} and later developed by different authors (see, e.g.,  \cite{rybalko1966}, \cite{sakhnovich2000}, \cite{Denisov2002}, \cite{Teplyaev2005}, \cite{bessonov20201}). Detailed account of the theory can be found in \cite{Denisov2009}. 

\medskip Let us start with a description of basic objects and results in the theory of orthogonal polynomials and in the theory of the Krein systems.
\subsection{Orthogonal polynomials on the unit circle }
Orthogonal polynomials on the unit circle appear in many areas of mathematics. An account of the theory can be found in  books \cite{Szego} by G. Szeg\H{o} and \cite{Simon} by B. Simon. Let $\mu$ be a probability measure on the interval $[-\pi, \pi]$ such that the support of $\mu$ is not a finite set. Then there exists a family of orthonormal polynomials $\{\phi_n\}_{n\ge 0 }$ in $L^2( [\pi, \pi],\mu)$. More precisely, let the family $\phi_n$ be  defined by 
\begin{align}\label{OPUC_defenition}
    \deg \phi_n = n,\quad \langle \phi_n, \phi_m \rangle_{L^2(\mu)} = \int_{-\pi}^{\pi}\phi_n(e^{it})\ol{\phi_m(e^{it})}\,d\mu(t) = \delta_{n,m},
\end{align}
where $\delta_{n,m}$ is the Kronecker symbol. Define a family of reversed orthogonal  polynomials by 
\begin{align*}
    \phi_n^*(z) = z^n \ol{\phi_n(1/\ol{z})},\quad z\in\Cm.
\end{align*}
It is easy to see that $\phi_n^*$ is a polynomial of degree not greater that $n$.
We say that the measure $\mu$ belongs to the Szeg\H{o} class on the unit circle if \begin{align}\label{Szego_condition_discrete}
    \int_{-\pi}^{\pi} \log \mu'(\theta)\,d\theta > -\infty,
\end{align}
where $\mu'$ is the density of the absolutely continuous part of $\mu$ with respect to the Lebesgue measure on $[-\pi, \pi]$. The following theorem is a result of work of different authors, we refer to it as the Szeg\H{o} theorem. This statement is a combination of Theorem 2.4.1 and Theorem 2.7.15 from \cite{Simon}. 
\begin{knownthm}[Szeg\H{o} theorem]\label{Szego_theorem}
    Let $\mu$ be a nontrivial measure on $[-\pi, \pi]$ and let $\phi_n$ be the associated orthogonal polynomials. Then the following conditions are equivalent: 
    \begin{enumerate}
        \item[$(a)$] the measure $\mu$ belongs to the Szeg\H{o} class,
        \item[$(b)$] for some $z_0$ such that $|z_0| < 1$ we have $\slim_{n = 0}^{\infty}|\phi_n(z_0)|^2 <\infty$,
        \item[$(c)$] there exists an analytic function $\Pi$ in the unit disc $\mathbb{D} = \{z\in \Cm\colon |z| < 1\}$ such that
        \begin{align*}
            \Pi(z) = \lim_{n\to\infty} \phi_n^*(z),\quad z\in\D,
        \end{align*}
        and the convergence is uniform on compact subsets in $\D$.
    \end{enumerate}
\end{knownthm}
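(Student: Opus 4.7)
My approach follows the classical Verblunsky--Geronimus program centered on the Szegő recurrences. First I derive, by applying Gram--Schmidt to shifted polynomial sequences, the recurrences
\begin{align*}
\phi_{n+1}(z) &= \rho_n^{-1}\bigl(z\phi_n(z) - \bar\alpha_n\phi_n^*(z)\bigr),\\
\phi_{n+1}^*(z) &= \rho_n^{-1}\bigl(\phi_n^*(z) - \alpha_n z\phi_n(z)\bigr),
\end{align*}
where $\alpha_n\in\D$ are the Verblunsky parameters of $\mu$ and $\rho_n = \sqrt{1-|\alpha_n|^2}$. Evaluating the second recurrence at $z=0$ yields $\phi_n^*(0) = \prod_{j<n}\rho_j^{-1}$, which is the leading coefficient of $\phi_n$. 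The strategy is to show that the three conditions $(a)$, $(b)$, $(c)$ are all equivalent to the single summability $\sum_n|\alpha_n|^2 < \infty$.

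\textbf{Transfer-matrix reductions.} Rewriting the recurrence as a matrix product
\begin{equation*}
\begin{pmatrix}\phi_{n+1}(z)\\ \phi_{n+1}^*(z)\end{pmatrix} = T_n(z)\begin{pmatrix}\phi_n(z)\\ \phi_n^*(z)\end{pmatrix}, \qquad T_n(z) = \rho_n^{-1}\begin{pmatrix} z & -\bar\alpha_n \\ -\alpha_n z & 1\end{pmatrix},
\end{equation*}
I would observe that on a disk $\{|z|\le r\}$ with $r<1$, the telescoping bound $\|T_n(z) - \operatorname{diag}(z,1)\|\le C_r(|\alpha_n|+|\alpha_n|^2)$ together with $\sum|\alpha_n|^2 <\infty$ (and Cauchy--Schwarz applied to the linear term) forces uniform convergence of $T_{n-1}(z)\cdots T_0(z)$, hence of $\phi_n^*(z)$, as well as square-summability of $\phi_n(z_0)$. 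This proves $\sum|\alpha_n|^2<\infty \Rightarrow (b),(c)$. In the reverse direction, $(c)\Rightarrow \sum|\alpha_n|^2<\infty$ is immediate from $\phi_n^*(0)=\prod_{j<n}\rho_j^{-1}$, while $(b)\Rightarrow \sum|\alpha_n|^2<\infty$ follows from the well-known dichotomy that either polynomials are dense in $L^2(\mu)$ (whence the reproducing kernel $K_\infty(z_0,z_0)=\sum_n|\phi_n(z_0)|^2$ must be infinite, since point evaluation at $z_0$ could otherwise be extended as a bounded functional) or $\mu$ lies in the Szegő class.

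\textbf{Main obstacle.} The heart of the argument is the equivalence $(a)\Leftrightarrow \sum_n|\alpha_n|^2<\infty$, which amounts to the Szegő product identity
\begin{equation*}
\exp\left(\frac{1}{2\pi}\int_{-\pi}^{\pi}\log\mu'(\theta)\,d\theta\right) = \prod_{n\ge 0}(1-|\alpha_n|^2).
\end{equation*}
I would prove this in two steps. First, from the Szegő recursion one deduces the extremal formula $\prod_{j<n}(1-|\alpha_j|^2) = \|\Phi_n\|_{L^2(\mu)}^2 = \inf\bigl\{\|P\|_{L^2(\mu)}^2 : P \text{ monic of degree }n\bigr\}$, where $\Phi_n = \phi_n/\kappa_n$ denotes the monic orthogonal polynomial. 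Second, I identify $\inf_n\|\Phi_n\|_{L^2(\mu)}^2$ with the Szegő geometric mean: Jensen's inequality applied to $|P|^2\mu'$ yields the lower bound $\ge \exp\bigl(\tfrac{1}{2\pi}\int\log\mu'\,d\theta\bigr)$, while the matching upper bound comes from approximating the reciprocal of the outer Szegő function $D$ by polynomials in $L^2(\T)$. This complex-analytic input, relying on outer functions and Jensen's formula, is where the real depth of Theorem A resides; once it is in place, all the equivalences follow from the transfer-matrix reductions above.
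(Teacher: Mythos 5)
The paper does not prove this statement: it is stated as a known theorem and cited directly to Simon's book (Theorems 2.4.1 and 2.7.15), so there is no internal proof to compare your sketch against. Judged on its own, your outline does follow the standard Verblunsky--Geronimus route, which is essentially the route in the cited reference, and most of the reductions you list are sound: the Szeg\H{o} recurrences are correct, the identity $\phi_n^*(0)=\prod_{j<n}\rho_j^{-1}$ gives $(c)\Rightarrow\sum|\alpha_n|^2<\infty$ (using that $\phi_n^*(0)\ge 1$ is increasing, so a finite limit is automatically nonzero), and the Jensen lower bound $\|\Phi_n\|^2_{L^2(\mu)}\ge\exp\bigl(\tfrac{1}{2\pi}\int\log\mu'\bigr)$ for monic $\Phi_n$ is a correct and standard step toward $(a)\Leftrightarrow\sum|\alpha_n|^2<\infty$.

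The step that is genuinely underjustified is the transfer-matrix claim that $\sum|\alpha_n|^2<\infty$ ``forces uniform convergence of $T_{n-1}(z)\cdots T_0(z)$, hence of $\phi_n^*$, as well as square-summability of $\phi_n(z_0)$.'' A telescoping bound $\sum_n\|T_n-\mathrm{diag}(z,1)\|<\infty$ would require $\sum|\alpha_n|<\infty$, which you do not have; and the diagonal part $\mathrm{diag}(z,1)$ is not the identity, so the usual infinite-product criterion does not apply even for absolutely summable perturbations. The phrase ``Cauchy--Schwarz applied to the linear term'' gestures at the right idea, but to carry it out you must pair $|\alpha_k|$ with a square-summable partner. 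One legitimate partner is the geometric factor $|z|^j$ produced by the contracting block, but exploiting that requires a full Levinson/Harris--Lutz-type expansion of the matrix product, not a one-line telescoping estimate. The other legitimate partner is $|\phi_k(z)|$, since Cauchy--Schwarz gives $\sum\rho_k^{-1}|\alpha_k||z||\phi_k(z)|\le C\bigl(\sum|\alpha_k|^2\bigr)^{1/2}\bigl(\sum|\phi_k(z)|^2\bigr)^{1/2}$; but then condition $(b)$ is a prerequisite for $(c)$, not (as your sketch presents it) a byproduct of the same estimate. The clean path is to first prove $(a)\Rightarrow(b)$ directly from the extremal problem: $d\mu\ge|D|^2\,d\theta/2\pi$, so for $P$ of degree $<n$ with $P(z_0)=1$ one has $\int|P|^2 d\mu\ge(1-|z_0|^2)|D(z_0)|^2$ by the $H^2$ reproducing kernel, whence $K_n(z_0,z_0)$ is bounded; then the Christoffel--Darboux identity $|\phi_n^*(z)|^2=(1-|z|^2)K_n(z,z)+|\phi_n(z)|^2$ shows $|\phi_n^*(z)|$ is bounded, and the recursion $\phi_{n+1}^*-\phi_n^*=(\rho_n^{-1}-1)\phi_n^*-\rho_n^{-1}\alpha_nz\phi_n$ together with Cauchy--Schwarz and $\sum(\rho_n^{-1}-1)<\infty$ gives $(c)$. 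You should also be more explicit in the $(b)\Rightarrow$Szeg\H{o} direction: the contradiction is not just that evaluation at $z_0$ ``could be extended''; one must apply the hypothetical bounded functional to $e^{-ik\theta}$, observe the values would have to be $z_0^{-k}$ (by $L^2(\mu)$-density of polynomials and the isometry of multiplication by $e^{i\theta}$), and note these are unbounded while the functional is bounded.
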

\noindent If the equivalent conditions of the Szeg\H{o} theorem hold, then the series in part $(b)$ converges uniformly on compact subsets of $\D$. Moreover,  the function $\Pi$ is an outer function in the unit disc (see Theorem 2.3.5 \cite{Simon}) and $|\Pi(e^{it})|^{-2} = \mu'(t)$ almost everywhere on $[-\pi, \pi]$, i.e,
\begin{align*}
    \Pi(z) &= \exp\left[-\frac{1}{4\pi}\int_{-\pi}^{\pi}\frac{e^{i\theta} + z}{e^{i\theta} - z}\log \mu'(\theta)\,d\theta\right],\quad z\in\D.
\end{align*}
The function $D = \Pi^{-1}$ is called the Szeg\H{o} function and often is used instead of $\Pi$.
\subsection{Krein systems}
Krein systems  were first considered by M. G. Krein \cite{Krein} in 1954. Solutions of the Krein systems have properties similar to properties of orthogonal polynomials on the unit circle. Through such similarities it becomes possible to apply methods and ideas from the theory of orthogonal polynomials on the unit circle to spectral problems for self-adjoint differential operators. Detailed account of this approach can be found in the paper \cite{Denisov2009} by S. Denisov.
\begin{defn}
Let $a $ be a complex-valued function on the half-line $\R_+$. The Krein system with the coefficient $a$ is the following system of differential equations:
\begin{align}\label{diff_system}
\begin{cases}
   \frac{\partial}{\partial r}P(r,\lambda) = i\lambda P(r,\lambda) - \ol{a(r)}P_*(r,\lambda),&\quad \,\,P(0,\lambda) = 1,\\
   \frac{\partial}{\partial r}P_*(r,\lambda) = - a(r) P(r, \lambda),&\quad P_*(0,\lambda) = 1.
\end{cases}
\end{align}
In present paper we consider the case when $a\in L^1_{\loc}(\R_+)$, that is, $a\in L^1[ 0,x]$ for every $x\ge  0$. Here the complex number $\lambda$ is a spectral parameter; solutions $P$ and  $P_*$ are called continuous analogs of orthogonal polynomials on the unit circle.  
\end{defn}
Solutions $P(r,\cdot)$ and $P_*(r,\cdot)$ are entire functions. The parameter $r\ge 0$ plays the same role as the index $n\ge 0$ in \eqref{OPUC_defenition}: $P(r,\cdot)$ has the exponential type $r$ and $P_*(r,\cdot)$ has the exponential type not greater than $r$, see Chapter 3 in \cite{Denisov2009}.
For any function $a\in L^1_{\loc}(\R_+)$ there exists a Borel measure $\sigma$ on the real line (see \cite{Denisov2009}) such that
\begin{align}\label{first_condition_Szego_class}
    \int_{\R}\frac{d\sigma(x)}{1 + x^2} < \infty,
\end{align}
and the map
\begin{align*}
    \mathcal{U}_{\sigma}:f\mapsto \int_0^{\infty}f(r)P(r,\lambda)\,dr,
\end{align*}
initially defined on simple measurable functions with compact support can be continued to an isometry from $L^2(\R_+)$ to $L^2(\R, \sigma)$. This measure is called the spectral measure of system \eqref{diff_system}.
We say that the measure $\sigma$ belongs to the Szeg\H{o} class on the real line if it satisfies condition \eqref{first_condition_Szego_class} and 
\begin{align}
    \int_{\R}\frac{|\log \sigma'(x)|}{1 + x^2}\, dx < \infty, \label{Szego_main_condition}
\end{align}
where $\sigma'$ is the density of the absolutely continuous part of $\sigma$ with respect to the Lebesgue measure on $\R$. A key result in the theory of the Krein systems in an analog of the Szeg\H{o} theorem. It was originally formulated by Krein in \cite{Krein} with minor inaccuracies subsequently corrected by A. Teplyaev in \cite{Teplyaev2005}. For a complete proof see Chapter 8 in \cite{Denisov2009}.
\begin{knownthm}[Krein theorem]\label{Krein_theorem}
    Let a measure $\sigma$ be the spectral measure of some Krein system \eqref{diff_system} and let $P, P_*$ be the solutions of that system. Then the following conditions are equivalent:
    \begin{enumerate}
        \item[$(a)$] the measure  $\sigma$ belongs to Szeg\H{o} class,
        \item[$(b)$] for a point $\lambda_0\in \Cm_+ = \{z\in\Cm\mid \Im(z) > 0\}$ we have
        \begin{align*}
            \int_0^{\infty}|P(r,\lambda_0)|^2 \, dr <\infty,
        \end{align*}
        \item[$(c)$] there exists analytic in  $\Cm_+$ function $\Pi$ and a sequence  $r_n\to+\infty$ such that
        \begin{align*}
            \Pi(\lambda) = \lim_{n\to\infty}P_*(r_n,\lambda),
        \end{align*}
        and the convergence is uniform on compact subsets in $\Cm_+$.
    \end{enumerate}
\end{knownthm}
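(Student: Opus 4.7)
My plan is to mimic the proof of the discrete Szegő theorem (Theorem \ref{Szego_theorem}), with the Krein system \eqref{diff_system} replacing the Szegő recurrences. The decisive preparatory step is the identity
\begin{align*}
|P_*(r,\lambda)|^2 - |P(r,\lambda)|^2 = 2\Im(\lambda)\int_0^r|P(s,\lambda)|^2\,ds,\quad \lambda\in\Cm_+,
\end{align*}
which I obtain by differentiating the left side in $r$, substituting from \eqref{diff_system}, observing that the two terms involving $a(r)$ are complex conjugates of each other and cancel, and using $|P(0,\lambda)|^2=|P_*(0,\lambda)|^2=1$. Since $\Im(\lambda_0)>0$, condition (b) is exactly equivalent to boundedness of $r\mapsto |P_*(r,\lambda_0)|$.

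With that identity in hand, the equivalence $(b)\Leftrightarrow(c)$ is reasonably short. For $(c)\Rightarrow(b)$, uniform convergence of $P_*(r_n,\cdot)$ on a neighborhood of $\lambda_0$ yields $\sup_n|P_*(r_n,\lambda_0)|<\infty$, whence $\int_0^{r_n}|P(s,\lambda_0)|^2\,ds$ is uniformly bounded by the identity; letting $r_n\to\infty$ gives (b). For $(b)\Rightarrow(c)$, the identity gives $\sup_r|P_*(r,\lambda_0)|<\infty$, which I would upgrade to local uniform boundedness in $\lambda$ via a Grönwall estimate on the transfer matrix of \eqref{diff_system} in the $\lambda$-variable (the perturbation in $\lambda$ only contributes an exponential factor, bounded on compacts in $\Cm_+$). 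Montel's theorem then extracts a subsequence $r_n\to\infty$ along which $P_*(r_n,\cdot)$ converges uniformly on compacts to an analytic function $\Pi$.

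The remaining equivalence $(a)\Leftrightarrow(b)$ is the deeper step and must pass through the spectral measure itself. Using the isometry $\mathcal{U}_\sigma\colon L^2(\R_+)\to L^2(\R,\sigma)$ one derives a continuous analog of Szegő's sum rule relating a Poisson-type average $\int_\R\log\sigma'(x)\,d\mu_{\lambda_0}(x)$ (with $d\mu_{\lambda_0}$ the Poisson measure associated to $\lambda_0\in\Cm_+$) to the quantity $\log|\Pi(\lambda_0)|$, which in turn is the limit of $\log|P_*(r_n,\lambda_0)|$. Finiteness of \eqref{Szego_main_condition} is then equivalent to finiteness of this spectral integral, hence to (b) by Step~1. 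In the converse direction, one constructs the appropriate outer function $D$ in $\Cm_+$ from $\sigma'$ via the half-plane Poisson kernel and identifies $\Pi=D^{-1}$ through an extremal/normality argument.

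The main obstacle is this last equivalence: whereas $(b)\Leftrightarrow(c)$ follows almost immediately from the ODE-level identity above, linking it to the Szegő condition on $\sigma$ requires the full continuous sum rule, whose proof depends on the fine structure of the spectral map $\mathcal{U}_\sigma$ rather than on \eqref{diff_system} alone.
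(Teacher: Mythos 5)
This theorem is stated in the paper as a \texttt{knownthm} and is \emph{not} proved there: the text explicitly attributes it to Krein, with corrections by Teplyaev and a full proof in Chapter~8 of Denisov's survey \cite{Denisov2009}. So there is no in-paper proof to compare against; I will assess your sketch on its own merits.

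Your derivation of the Christoffel--Darboux identity \eqref{CD_module} and the direction $(c)\Rightarrow(b)$ are correct. But there are two genuine gaps.

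First, the claim that ``(b) is exactly equivalent to boundedness of $r\mapsto |P_*(r,\lambda_0)|$'' is wrong. The identity reads
\begin{align*}
|P_*(r,\lambda_0)|^2 = |P(r,\lambda_0)|^2 + 2\Im(\lambda_0)\int_0^r|P(s,\lambda_0)|^2\,ds,
\end{align*}
so (b) only controls the second term. Since $P(\cdot,\lambda_0)\in L^2(\R_+)$ implies $\liminf_{r\to\infty}|P(r,\lambda_0)|=0$ but not $\sup_r|P(r,\lambda_0)|<\infty$, all one may conclude from (b) is $\liminf_{r\to\infty}|P_*(r,\lambda_0)|<\infty$, i.e.\ boundedness \emph{along a subsequence}. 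This is precisely why statement (c) only asserts convergence along a sequence $r_n\to\infty$, and why Teplyaev's result upgrading this to full convergence requires the additional hypothesis $a\in L^2$ (see the discussion preceding Theorem~\ref{Teplyaev_average_convergence}). Your ``upgrade'' therefore starts from a false premise.

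Second, and more fundamentally, the Gr\"onwall argument for $(b)\Rightarrow(c)$ does not work. Varying the spectral parameter in \eqref{diff_system} produces a Duhamel term whose Gr\"onwall bound is of the form $\exp\bigl(C|\lambda-\lambda_0|\,r\bigr)$; this is \emph{not} bounded as $r\to\infty$ for fixed $\lambda\neq\lambda_0$, so it gives no uniform-in-$r$ control on compacts in $\Cm_+$. In fact, the implication ``(b) at one $\lambda_0$ implies local uniform bounds near $\lambda_0$'' cannot be obtained at the ODE level alone; the standard route is $(b)\Rightarrow(a)\Rightarrow$ (b) holds at \emph{every} $\lambda\in\Cm_+$ $\Rightarrow(c)$, exactly as in the discrete case (Theorems~2.4.1 and 2.7.15 of \cite{Simon}). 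In other words, the ``deep step'' $(a)\Leftrightarrow(b)$ that you defer is a prerequisite for $(b)\Rightarrow(c)$ and cannot be bypassed as you propose. Your sketch of $(a)\Leftrightarrow(b)$ gestures at the right objects (the isometry $\mathcal{U}_\sigma$, the Poisson integral of $\log\sigma'$, the outer function $\Pi$), but with no concrete argument there remains no complete proof of any implication out of (b).
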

If the equivalent conditions of the Krein theorem hold then the integral in part $(b)$ converges uniformly on compact subsets. Moreover, $\Pi$ is an outer function in the upper half-plane and       \begin{align}\label{pi_modulus_on_the_boundary}
         |\Pi(x)|^2 = |2\pi\sigma'(x)|^{-1}
\end{align}
almost everywhere on $\R$ (see Lemma 8.6 in \cite{Denisov2009}), i.e.,
\begin{align}\label{Pi_definition}
    \Pi(\lambda) = \frac{1}{\sqrt{2\pi}}\exp\left[\frac{1}{2\pi i}\ilim_{-\infty}^{\infty}\left(\frac{1}{s - \lambda} - \frac{s}{s^2 + 1}\right)\log \sigma'(s)\, ds\right],\quad\lambda\in\Cm_+.
\end{align}
In addition, the function $[(\lambda + i)\Pi(\lambda)]^{-1}$ belongs to the Hardy space $H^2$.
\subsection{Mate-Nevai-Totik theorem}
A. Mate, P. Nevai, V. Totik in 1991 proved \cite{Mate_Nevai_Totik} asymptotics of so-called Christoffel functions. Their result in the language of orthogonal polynomials can be considered as a refinement of part (c) of the Szeg\H{o} theorem for points on the unit circle $\T$. Let us formulate their result. Let $\mu$ be a probability Borel measure on $[-\pi, \pi]$. For $z\in\Cm$, define an $n$-th Christoffel function $w_n$ as follows:
\begin{align*}
    w_n(\mu, z) = \min\left\{\frac{1}{2\pi|P(z)|^2}\int_{-\pi}^{\pi}|P(e^{i\theta})|^2\,d\mu(\theta)\text{ } \bigg| \text{ } P\text{ -- polynomial, } \deg P < n, P(z)\neq 0 \right\}.
\end{align*}
\begin{knownthm}[A. Mate, P. Nevai, V. Totik Theorem 1, \cite{Mate_Nevai_Totik}]\label{MNT_main}
Assume that the measure $\mu$ belongs to the Szeg\H{o} class on the unit circle. Then we have
\begin{align*}
    \lim_{n\to\infty} n w_n(\mu, e^{it}) = \mu'(t)
\end{align*}
for almost every $t\in[-\pi,\pi]$.
\end{knownthm}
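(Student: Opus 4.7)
The plan is to reduce the theorem to a Cesaro a.e. convergence statement for the orthonormal polynomials on the unit circle. Expanding an arbitrary polynomial $P$ of degree less than $n$ in the orthonormal basis $\phi_0,\ldots,\phi_{n-1}$ and applying Cauchy-Schwarz, one identifies the extremal polynomial in the definition of $w_n(\mu,z)$ with the reproducing kernel $K_n(\cdot,z)=\sum_{k=0}^{n-1}\ol{\phi_k(z)}\phi_k$, giving
\begin{align*}
    w_n(\mu,z)=\bigl(2\pi K_n(z,z)\bigr)^{-1}=\Bigl(2\pi\sum_{k=0}^{n-1}|\phi_k(z)|^2\Bigr)^{-1}.
\end{align*}
On the unit circle the defining identity $\phi_k^*(e^{it})=e^{ikt}\ol{\phi_k(e^{it})}$ forces $|\phi_k^*(e^{it})|=|\phi_k(e^{it})|$, so the conclusion of the theorem is equivalent to the Cesaro a.e. convergence
\begin{align*}
    \frac{1}{n}\sum_{k=0}^{n-1}|\phi_k^*(e^{it})|^2\xrightarrow[n\to\infty]{}\frac{1}{2\pi\mu'(t)}\quad\text{for a.e. } t\in[-\pi,\pi].
\end{align*}

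Theorem \ref{Szego_theorem} supplies the key input: $\phi_n^*\to\Pi$ uniformly on compact subsets of $\D$, where $\Pi$ is outer with $|\Pi(e^{it})|^2=1/\mu'(t)$ almost everywhere; the target limit is thus $|\Pi(e^{it})|^2/(2\pi)$. I would split the proof into a lower and an upper bound. For the \emph{lower} bound $\liminf \ge |\Pi|^2/(2\pi)$, I would exploit that $\phi_n^*$ has no zeros in $\ol{\D}$ (a classical feature of reversed orthonormal polynomials), so $\log|\phi_n^*|^2$ on $\T$ majorizes its Poisson integral. Combining this sub-mean-value inequality with the interior convergence $\phi_n^*\to\Pi$, a diagonal radial approximation together with Fatou's lemma applied to Cesaro means yields the lower estimate for a.e. $t$.

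For the \emph{upper} bound $\limsup \le |\Pi|^2/(2\pi)$, which is the genuine content of the Mate-Nevai-Totik argument, the plan is first to establish an $L^2$ statement: starting from the normalization $\int_\T|\phi_n^*|^2\,d\mu=1$ and the interior convergence $\phi_n^*/\Pi\to 1$ on compact subsets of $\D$, one derives that the ratio $\phi_n^*/\Pi$ converges to $1$ in $L^2(\T,\mu'\,dt)$. A Hardy-space maximal inequality applied to the outer function $1/\Pi$ (finite a.e. at points where $\mu'>0$) is then used to upgrade this $L^2$ convergence to a pointwise a.e. Cesaro upper bound on $\frac{1}{n}\sum|\phi_k^*|^2$. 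Combining the two one-sided estimates gives the desired a.e. limit, and hence the theorem.

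The main obstacle is precisely the upper bound on the boundary: uniform convergence on compact subsets of $\D$ does not translate directly into any pointwise control of $\phi_n^*$ on $\T$, and individual values $|\phi_n^*(e^{it})|^2$ need not converge. Cesaro averaging is crucial because it smooths these boundary fluctuations into a quantity that can be compared, via reproducing-kernel identities and maximal-function estimates, to $|\Pi(e^{it})|^2$; producing this comparison uniformly in $n$ with the help of the outer structure of $\Pi$ and the orthonormality of $\{\phi_k\}$ is where the bulk of the technical work lies.
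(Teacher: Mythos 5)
The paper does not prove Theorem~\ref{MNT_main}; it is cited from \cite{Mate_Nevai_Totik} as a known result. Its continuous analog, Theorem~\ref{MNT_main_theorem}, \emph{is} proved in Section~\ref{proof of MNT theorem}, and since that proof is modeled on the original argument it is the natural comparison. Your reduction via $2\pi w_n(\mu,z)=\bigl(\sum_{k<n}|\phi_k(z)|^2\bigr)^{-1}$ is exactly \eqref{w_n_representation}, and the split into a $\liminf$ and a $\limsup$ estimate mirrors the paper's decomposition into \eqref{lower_boundary_ineq}--\eqref{upper_boundary_ineq}. Beyond that the two routes diverge, and your lower bound has a genuine gap.

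The sub-mean-value property of $|\phi_k^*|^2$ bounds \emph{interior} values by Poisson averages of \emph{boundary} values: $|\phi_k^*(re^{it})|^2\le P_r\bigl[|\phi_k^*|^2\bigr](t)$. Averaging over $k<n$ and letting $n\to\infty$ yields only $|\Pi(re^{it})|^2\le\liminf_n P_r[G_n](t)$ for the Ces\`aro mean $G_n=\frac1n\sum_{k<n}|\phi_k^*|^2$. This controls Poisson averages of $G_n$ but says nothing pointwise about $G_n(t)$ itself, and Fatou's lemma ($\int\liminf\le\liminf\int$) points the wrong way to extract the needed boundary lower bound. The paper, like the original proof, establishes this direction with a test-function argument instead: insert a localized Fej\'er-type competitor into the variational definition of $w_n$ and invoke a.e.\ convergence of the Fej\'er approximation of $\mu$; this is precisely Lemma~\ref{easy_part_of_theorem} in the continuous setting, and it requires no Szeg\H{o} hypothesis at all. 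For the hard direction your one-line sketch ($\phi_n^*/\Pi\to1$ in $L^2$, then a maximal inequality) names the right circle of ideas but omits what actually closes the argument: the paper's analog (Sections~\ref{pw_lemma}--\ref{lower_bound}) must fix a Lebesgue point of the Szeg\H{o} function at which a non-tangential limit also exists, work directly with the extremal minimizer $f_r$ rather than with $\phi_n^*/\Pi$, prove the Hadamard-factorization growth bound $|f_r(t)|\le|f_r(t+i\gamma)|e^{\gamma r}$ (Lemma~\ref{lemma_with_gamma}), a Poisson-type representation for $f_rD$ (Lemma~\ref{possibility_of_poisson}), and the local pointwise estimate $|f_r(t)|\le 25|f_r(0)|$ on a window of width $\sim 1/r$ (Lemma~\ref{lemma 8}), and only then obtain the sharp constant through a careful splitting of a Cauchy-type integral. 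An unspecified $L^2$-to-pointwise upgrade via a maximal inequality will not produce that constant on its own.
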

\noindent The function $w_n$ admits the representation
\begin{align}\label{w_n_representation}
    2\pi w_n(\mu, z) = \left(\slim_{k = 0}^{n - 1}|\phi_k( z)|^2\right)^{-1},\quad z\in\Cm,
\end{align}
where $\phi_n$ are the orthonormal polynomials on the unit circle  associated with $\mu$, see Theorem 11.3.1 in \cite{Szego}. This representation allows us to  formulate Theorem  \ref{MNT_main} using only the orthogonal polynomials.
\begin{knownthm}\label{MNT_convergence_for_OPUC}
    Let $\mu$ be a nontrivial probability Borel measure on $\T$ and $\phi_n$ be the orthogonal polynomials associated with $\mu$. If $\mu$ belongs to the Szeg\H{o} class, then for almost every $t\in[-\pi,\pi]$
    \begin{align*}
        \frac{1}{n}\slim_{k = 0}^{n - 1}|\phi_k(e^{it})|^2\to \frac{1}{ 2\pi \mu'(t)},\quad n\to\infty.
    \end{align*}
\end{knownthm}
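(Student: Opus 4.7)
The statement is essentially an algebraic reformulation of Theorem \ref{MNT_main} combined with the identity (\ref{w_n_representation}) for Christoffel functions, so my plan is very short and in fact almost a one-liner.

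First I would use the Szeg\H{o} condition (\ref{Szego_condition_discrete}), namely $\int_{-\pi}^{\pi}\log \mu'(\theta)\,d\theta > -\infty$, to note that $\mu'(t) > 0$ for almost every $t\in[-\pi,\pi]$: indeed, if $\mu'$ vanished on a set of positive Lebesgue measure, the logarithmic integral would diverge to $-\infty$. This guarantees that the quantity $\frac{1}{2\pi\mu'(t)}$ in the conclusion is a well-defined finite positive number on a set of full measure.

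Second, I would invoke Theorem \ref{MNT_main} to get, for almost every $t\in[-\pi,\pi]$,
\begin{align*}
    \lim_{n\to\infty} n\,w_n(\mu,e^{it}) = \mu'(t),
\end{align*}
and then rewrite the left-hand side using the standard representation (\ref{w_n_representation}) at the point $z = e^{it}$:
\begin{align*}
    n\,w_n(\mu,e^{it}) = \frac{n}{2\pi \sum_{k=0}^{n-1}|\phi_k(e^{it})|^2}.
\end{align*}

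Finally, on the full-measure subset of $[-\pi,\pi]$ where both the Mate--Nevai--Totik limit holds and $\mu'(t) > 0$, I would take reciprocals in the limit relation above to conclude
\begin{align*}
    \frac{1}{n}\sum_{k=0}^{n-1}|\phi_k(e^{it})|^2 \,\longrightarrow\, \frac{1}{2\pi\mu'(t)}, \quad n\to\infty.
\end{align*}
There is no real obstacle: all the substantive content is imported from Theorem \ref{MNT_main} and from the classical Christoffel function identity (\ref{w_n_representation}) (Theorem 11.3.1 in \cite{Szego}). The only thing to be slightly careful about is the positivity of $\mu'$ almost everywhere, but this is automatic from the Szeg\H{o} hypothesis.
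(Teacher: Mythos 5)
Your proposal is correct and follows essentially the same route as the paper: the paper also derives Theorem~\ref{MNT_convergence_for_OPUC} directly from Theorem~\ref{MNT_main} via the Christoffel function identity~\eqref{w_n_representation}, treating it as a reformulation. The extra remark that the Szeg\H{o} condition~\eqref{Szego_condition_discrete} forces $\mu'(t)>0$ a.e.\ (so that taking reciprocals is legitimate) is left implicit in the paper but is a sound and welcome clarification.
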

The core result of the present paper is the following theorem. It can be seen as a refinement of part (c) of the Krein theorem for points on the real line.
\begin{thm}\label{MNT_main_theorem}
Consider a Krein system with a coefficient in $L^1_{\loc}(\R_+)$ such that its spectral measure $\sigma$ is in the Szeg\H{o} class on the real line. Then for almost every $x\in \R$ there exists the limit
\begin{align*}
    \frac{1}{r}\int_0^r |P(s,x)|^2ds\to\frac{1}{2\pi \sigma'(x)},\quad r\to\infty.
\end{align*}
\end{thm}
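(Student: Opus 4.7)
The plan is to adapt the discrete Mate-Nevai-Totik strategy (Theorem~\ref{MNT_convergence_for_OPUC}) to the continuous setting by identifying $\int_0^r|P(s,x)|^2\,ds$ with the diagonal of a reproducing kernel, and then controlling it via a Christoffel-type variational formula together with the Szegő function $\Pi$ of Theorem~\ref{Krein_theorem}. I would introduce the Christoffel--Darboux kernel
\[
    K_r(\lambda,\mu) \;:=\; \int_0^r P(s,\lambda)\,\overline{P(s,\mu)}\,ds,
\]
which is the reproducing kernel of $\mathcal{H}_r:=\mathcal{U}_\sigma\bigl(\mathbf{1}_{[0,r]}L^2(\mathbb{R}_+)\bigr)\subset L^2(\sigma)$, a subspace of entire functions of exponential type at most $r$. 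A direct computation with \eqref{diff_system} gives $\tfrac{d}{ds}(|P|^2-|P_*|^2)\equiv 0$ when $\lambda$ is real, so $|P(s,x)|=|P_*(s,x)|$ on $\mathbb{R}$ and therefore $K_r(x,x)=\int_0^r|P(s,x)|^2\,ds$ for $x\in\mathbb{R}$. The reproducing property supplies the variational formula $1/K_r(x,x)=\inf\{\|f\|_{L^2(\sigma)}^2:f\in\mathcal{H}_r,\ f(x)=1\}$, reducing the theorem to a two-sided estimate for $K_r(x,x)/r$.

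For the upper bound $\limsup_r r^{-1}K_r(x,x)\le (2\pi\sigma'(x))^{-1}$, at every Lebesgue point $x$ of $\sigma'$ with $\sigma'(x)>0$ I would construct a trial function $f_r\in\mathcal{H}_r$ with $f_r(x)=1$ and $\|f_r\|_{L^2(\sigma)}^2\le (2\pi\sigma'(x)\,r)^{-1}(1+o(1))$. The candidate is a Fejér-type kernel of exponential type $r$, such as $F_r(\lambda)=\bigl(\tfrac{\sin(r(\lambda-x)/2)}{r(\lambda-x)/2}\bigr)^2$, multiplied by a slowly varying factor built from the Szegő function $\Pi$ so that $|f_r|^2\,d\sigma$ is locally comparable to Lebesgue measure near $x$. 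The identity $|\Pi(x)|^2=(2\pi\sigma'(x))^{-1}$ from \eqref{pi_modulus_on_the_boundary} combined with Lebesgue differentiation then delivers the required bound.

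For the matching lower bound, the plan is to first establish the integrated statement
\[
    \int_{\mathbb{R}}g(x)\,\frac{K_r(x,x)}{r}\,d\sigma(x)\;\longrightarrow\;\int_{\mathbb{R}}g(x)\,\frac{dx}{2\pi},\qquad r\to\infty,
\]
for bounded compactly supported $g\ge 0$. The ingredients are Parseval applied to $\mathcal{U}_\sigma$, the convergence $P_*(r_n,\lambda)\to\Pi(\lambda)$ uniformly on compacts of $\mathbb{C}_+$ (Theorem~\ref{Krein_theorem}), and the boundary identity $|\Pi|^2=(2\pi\sigma')^{-1}$. Coupling this integrated statement with the pointwise upper bound via a Fatou-style argument should then pin the liminf of $K_r(x,x)/r$ down to $(2\pi\sigma'(x))^{-1}$ at almost every $x$.

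The main obstacle is precisely this weak-convergence step: it is the continuous analogue of the classical OPUC fact that $\tfrac{1}{n}\sum_{k=0}^{n-1}|\phi_k(e^{it})|^2\,d\mu(t)$ converges weakly to $dt/(2\pi)$ on $\mathbb{T}$. Transferring the upper-half-plane asymptotics of $P_*$ down to boundary values on the real line requires quantitative rather than merely locally uniform control, and will rely on outer-function factorisation in the half-plane together with the property $[(\lambda+i)\Pi(\lambda)]^{-1}\in H^2$ noted after Theorem~\ref{Krein_theorem}; this is where the specific features of Krein theory, rather than the abstract reproducing-kernel framework alone, enter the argument.
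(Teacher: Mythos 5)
Your plan has the two directions of the inequality interchanged, and the mechanism you propose for the hard direction does not close.

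\textbf{Direction of the trial-function estimate.} Recall that $m_r(\sigma,x)=K_r(x,x)^{-1}$, so the variational formula bounds $K_r(x,x)$ \emph{from below}: any admissible trial function $f$ with $f(x)=1$ gives $1/K_r(x,x)\le\|f\|^2_{L^2(\sigma)}$, hence $K_r(x,x)\ge\|f\|^{-2}_{L^2(\sigma)}$. A Fej\'er-type function of exponential type $r$ normalised by $f(x)=1$ satisfies $\|f\|^2_{L^2(\sigma)}\approx 2\pi\sigma'(x)/r$ at Lebesgue points (note $\sigma'$ appears in the numerator, not the denominator as you wrote), which yields $\liminf_r r^{-1}K_r(x,x)\ge (2\pi\sigma'(x))^{-1}$. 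This is the \emph{easy} half, and it corresponds to the paper's Lemma~\ref{easy_part_of_theorem}. The estimate $\limsup_r r^{-1}K_r(x,x)\le (2\pi\sigma'(x))^{-1}$, equivalently $\liminf_r r\,m_r(\sigma,x)\ge 2\pi\sigma'(x)$, is the hard half, and no single trial function can prove it: you would need a \emph{lower} bound valid for every $f\in PW_r$ with $f(x)=1$.

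\textbf{The Fatou step does not recover pointwise a.e.\ convergence.} Suppose you have the easy direction $\liminf_r r^{-1}K_r(x,x)\ge (2\pi\sigma'(x))^{-1}$ a.e.\ and the integrated statement
\[
\int g(x)\,\frac{K_r(x,x)}{r}\,d\sigma(x)\longrightarrow\int g(x)\,\frac{dx}{2\pi}.
\]
Setting $v_r(x)=r^{-1}K_r(x,x)-(2\pi\sigma'(x))^{-1}$, Fatou's lemma applied to $v_r$ (with the easy-direction pointwise bound supplying $\liminf v_r\ge 0$ a.e.) gives $\int g\,\liminf_r v_r\,d\sigma\le\liminf_r\int g\,v_r\,d\sigma=0$, hence $\liminf_r v_r=0$ a.e., i.e.\ $\liminf_r r^{-1}K_r(x,x)=(2\pi\sigma'(x))^{-1}$ a.e.\ --- which you already knew. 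What you need is $\limsup_r v_r\le 0$ a.e., and Fatou gives you nothing in that direction without a uniform-in-$r$ integrable majorant of $r^{-1}K_r(x,x)$, which is not available. The obstruction is the standard ``typewriter'' phenomenon: a sequence can have $\liminf=0$ everywhere and $L^1$-norm tending to $0$ while $\limsup$ is bounded away from $0$ everywhere. Weak convergence plus a one-sided pointwise bound cannot produce two-sided a.e.\ convergence.

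\textbf{What is actually needed.} The paper proves $\liminf_r r\,m_r(\sigma,x)\ge 2\pi\sigma'(x)$ by a direct pointwise analysis of the \emph{extremal} minimiser $f_r$ from Lemma~\ref{denisov m_a minimizer}. The key ingredients are: $f_r$ has only real zeros (Christoffel--Darboux together with Corollary~\ref{CD_corollary_module}), hence, by Hadamard factorisation, $|f_r(t)|\le|f_r(t+i\gamma)|e^{\gamma r}$ (Lemma~\ref{lemma_with_gamma}); a Cauchy/Poisson representation of $f_rD$ on $\mathbb{C}_+$ (Lemma~\ref{possibility_of_poisson}), valid even though $f_rD$ is only in an $H^1$-adjacent class; and a splitting of the contour integral against a kernel of the form $(e^{-irx}-1)/(x-\delta i)$ into near and far zones, where the near zone is controlled by the Lebesgue-point property of $D$ and the bound $|f_r(t)|\lesssim|f_r(0)|$ for $|t|\le b/r$ (Lemma~\ref{lemma 8}). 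These are precisely the ``quantitative rather than merely locally uniform'' tools you correctly anticipate are needed, but your outline does not supply them: the weak-convergence/Fatou route is a different (and in this instance insufficient) mechanism.
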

\subsection{Teplyaev's conjectures}
Notice that the convergence in part (c) of the Krein theorem holds only for some sequence of real numbers $r_n$ while there is no such constraint in the part (c) of the Szeg\H{o} theorem. A. Teplyaev proved \cite{Teplyaev2005}  that the convergence $P_*(r,\lambda)\to\Pi(\lambda)$  holds for $a\in L^2(\R_+)$, see Theorem 1 in \cite{Teplyaev2005} or Theorem 11.1 in \cite{Denisov2009}. Additionally, the author proved sharpness of this result in the sense that it cannot be extended to any $L^p$ for $p > 2$, see Theorem 3 in \cite{Teplyaev2005}.
Teplyaev conjectured two positive results concerning the convergence of $P_*$ to $\Pi$ for a Krein system with the real-valued coefficient, see Conjecture 6.5, Conjecture 6.6 in \cite{Teplyaev2005}. The first conjecture was proved by S. Denisov in \cite{Denisov2009}, we state it below:
\begin{knownthm}[Lemma 8.6 \cite{Denisov2009}]
    Assume that the conditions of the Krein theorem hold, the function $a$ is real-valued, and a sequence $t_n\to\infty $ is such that  $ P(t_n,\lambda_0)\to 0 $ for some $\lambda_0\in\Cm_+$. Then $ P_*(t_n,\lambda) \to \Pi(\lambda)$ uniformly on compact subsets in $\Cm_+$.
\end{knownthm}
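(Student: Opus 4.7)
The strategy is to extract a subsequential normal-family limit $g$ of $P_*(t_n, \cdot)$, show that $g = c\Pi$ for a unimodular constant $c$, and then use the real-valuedness of $a$ to force $c = 1$. Differentiating $|P_*|^2$ and $|P|^2$ using the Krein system yields the Wronskian-type identity
\begin{align*}
|P_*(r,\lambda)|^2 - |P(r,\lambda)|^2 = 2\,\Im\lambda\int_0^r |P(s,\lambda)|^2\, ds.
\end{align*}
Under the Szeg\H{o} hypothesis, part (b) of the Krein theorem ensures $\int_0^\infty|P(s,\lambda)|^2\, ds$ is finite and locally uniformly bounded on $\Cm_+$ (more precisely, it equals $|\Pi(\lambda)|^2/(2\,\Im\lambda)$). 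This, together with the inequality $|P(r,\lambda)|\leq |P_*(r,\lambda)|$ on $\Cm_+$ implied by the identity, shows that $\{P_*(r,\cdot)\}_{r>0}$ is a normal family on $\Cm_+$. Hence any subsequence of $\{t_n\}$ admits a further subsequence $\{t_{n_k}\}$ along which $P_*(t_{n_k},\cdot)\to g(\cdot)$ uniformly on compacts, with $g$ analytic in $\Cm_+$; it suffices to prove that every such $g$ equals $\Pi$.

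Passing to the limit in the identity gives $|g(\lambda)|^2 \geq |\Pi(\lambda)|^2$ on $\Cm_+$, with equality at $\lambda_0$ because $P(t_n,\lambda_0) \to 0$. The same identity shows that $P_*(r,\cdot)$ is zero-free on $\Cm_+$ for each $r$: a zero would force both sides to vanish and thus $P(\cdot,\lambda)\equiv 0$ on $[0,r]$, contradicting the initial condition $P(0,\lambda) = 1$. By Hurwitz's theorem combined with $g(\lambda_0)\neq 0$, the limit $g$ is zero-free on $\Cm_+$. Consequently $\Pi/g$ is analytic on $\Cm_+$, bounded by $1$ in modulus, and attains modulus $1$ at $\lambda_0$; the maximum modulus principle yields $g = c\,\Pi$ for some unimodular constant $c$.

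To force $c = 1$ I use real-valuedness of $a$. Complex conjugating the Krein system while simultaneously substituting $\lambda\mapsto -\bar\lambda$ preserves both the equations and the initial conditions, so uniqueness of solutions gives
\begin{align*}
\overline{P(r,\lambda)} = P(r,-\bar\lambda), \qquad \overline{P_*(r,\lambda)} = P_*(r,-\bar\lambda).
\end{align*}
Plugging this into $\mathcal{U}_\sigma$ for real $f \in L^2(\R_+)$ shows that $|\mathcal{U}_\sigma f|$ is even, and uniqueness of the spectral measure then forces $d\sigma(-x) = d\sigma(x)$; evenness of $\log\sigma'$ inserted into \eqref{Pi_definition} yields $\Pi(iy) > 0$ for every $y > 0$. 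For the same imaginary arguments $\lambda = iy$ the above symmetry makes each $P_*(r,iy)$ real. Since $P_*(0,iy) = 1 > 0$, $P_*(r,iy)$ is continuous in $r$, and $P_*$ has no zeros in $\Cm_+$, a connectedness argument gives $P_*(r,iy) > 0$ for every $r \geq 0$ and $y > 0$. In the limit, $g(iy) \geq 0$; combined with $g(iy) = c\,\Pi(iy)$ and $\Pi(iy) > 0$, this forces $c = 1$. Hence every subsequential limit equals $\Pi$, so $P_*(t_n,\cdot)\to\Pi$ uniformly on compacts of $\Cm_+$.

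The main obstacle is this last step -- singling out the correct unimodular constant. Without real-valuedness of $a$ the constant $c$ is a genuine ambiguity, and ruling out $c = -1$ requires the chain ``real $a \Rightarrow$ symmetric $\sigma \Rightarrow \Pi(iy) > 0$'' paired with positivity of $P_*(r,iy)$ along the imaginary axis. Establishing the symmetry $d\sigma(-x) = d\sigma(x)$ cleanly via uniqueness of the spectral measure is the most delicate ingredient of the argument.
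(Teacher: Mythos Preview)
The paper does not prove this statement: it is quoted without proof as Lemma~8.6 of \cite{Denisov2009}. Your argument does, however, mirror the paper's own proof of the companion result Theorem~\ref{Teplyaev_average_convergence} (Ces\`aro convergence of $P_*$), which uses the same Montel/maximum-modulus scheme and pins down the unimodular constant via positivity at $\lambda=i$.

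There is one gap, in the normal-family step. From the Christoffel--Darboux identity you only bound the \emph{difference} $|P_*(r,\lambda)|^2-|P(r,\lambda)|^2\le|\Pi(\lambda)|^2$, and the inequality $|P|\le|P_*|$ adds nothing new; neither controls $|P_*|$ itself. Under the Szeg\H{o} hypothesis one knows $P(\cdot,\lambda)\in L^2(\R_+)$, but an $L^2$ function can have arbitrarily tall spikes, so you cannot conclude that $\{P_*(r,\cdot)\}_{r>0}$ is locally uniformly bounded on $\Cm_+$. The clean repair is to restrict to the given sequence $\{t_n\}$ and invoke Lemma~\ref{lemma8.5} (stated in the paper), which already yields $|P_*(t_n,\lambda)|\to|\Pi(\lambda)|$ uniformly on compacts; boundedness, normality, and in fact the equality $|g|=|\Pi|$ follow immediately, after which your Hurwitz/maximum-modulus argument is correct as written.

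One simplification worth noting: the detour through evenness of $\sigma$ is unnecessary and leans on uniqueness of the spectral measure, which the paper never asserts. In Corollary~\ref{equality_in_i} the paper takes a shorter route: substitute $\lambda=i$ into \eqref{Pi_definition} and observe that $\frac{1}{s-i}-\frac{s}{s^2+1}=\frac{i}{s^2+1}$ is purely imaginary, so the exponent is real and $\Pi(i)>0$ for \emph{any} Szeg\H{o}-class $\sigma$, symmetric or not. Combined with your (correct) observation that real $a$ makes each $P_*(r,i)$ real and hence positive, this already forces $c=1$ without ever touching the spectral measure.
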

The second conjecture concerns a convergence of $P_*$ in average (i.e. in the Ces\`aro sense).  We prove it in the present paper.
\begin{thm}\label{Teplyaev_average_convergence}
    Assume $a$ is a real-valued function and conditions of Krein theorem hold. Then $\Pi(\lambda)$ is the limit in average of $P_*(r,\lambda)$, that is,
    \begin{align*}
        \Pi(\lambda) = \lim_{r\to\infty}\frac{1}{r}\int_0^r P_*(\rho, \lambda)\, d\rho,
    \end{align*}
    and the convergence is uniform on compact subsets in  $\Cm_+$.
\end{thm}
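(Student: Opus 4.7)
The plan is to combine a simple identity for $|P_*|^2$ with Denisov's lemma via a density-one argument on $\R_+$.

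First, I would differentiate $|P_*(r, \lambda)|^2 - |P(r, \lambda)|^2$ in $r$, using the Krein system and the reality of $a$ (so that $\overline{a(r)} = a(r)$ and the cross term $-2\Re(aP\overline{P_*})$ appearing in the derivatives of both $|P|^2$ and $|P_*|^2$ cancels out of the difference) to obtain the identity
\begin{align*}
|P_*(r, \lambda)|^2 - |P(r, \lambda)|^2 = 2\Im(\lambda) \int_0^r |P(s, \lambda)|^2\, ds, \qquad \lambda \in \Cm,\ r \ge 0.
\end{align*}
Since part $(b)$ of the Krein theorem supplies $\int_0^\infty |P(s, \lambda)|^2\, ds < \infty$, locally bounded in $\lambda \in \Cm_+$, it follows that $|P_*(r, \lambda)|$ is locally uniformly bounded on $\Cm_+$, uniformly in $r \ge 0$. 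Consequently the Ces\`aro averages $G_r(\lambda) := \tfrac{1}{r}\int_0^r P_*(\rho, \lambda)\,d\rho$ form a normal family on $\Cm_+$.

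Next, I would upgrade Denisov's lemma (quoted in the excerpt) to the following effective statement: for every compact $K \subset \Cm_+$ and every $\eps > 0$, there exist $\delta > 0$ and $R > 0$ such that $r \ge R$ and $|P(r, \lambda_0)| \le \delta$ imply $\sup_{\lambda \in K} |P_*(r, \lambda) - \Pi(\lambda)| \le \eps$. This is a routine compactness reduction: its failure would yield a sequence $r_n \to \infty$ with $P(r_n, \lambda_0) \to 0$ yet $\sup_K|P_*(r_n, \cdot) - \Pi(\cdot)| \ge \eps$, directly contradicting Denisov's lemma.

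Finally, since $\int_0^\infty |P(s, \lambda_0)|^2\, ds < \infty$, the exceptional set $S_\delta := \{s \ge 0 : |P(s, \lambda_0)| > \delta\}$ has Lebesgue measure at most $\delta^{-2}\int_0^\infty |P(s, \lambda_0)|^2\, ds$, in particular finite. Splitting $[0, r] = ([0, R] \cup S_\delta) \sqcup ([R, r]\setminus S_\delta)$ and bounding the integrand uniformly by some constant $C_K < \infty$ on the first piece (by the first step and the fact that $\Pi$ is bounded on $K$) and by $\eps$ on the second (by the effective Denisov estimate of the second step), one obtains
\begin{align*}
\sup_{\lambda \in K} |G_r(\lambda) - \Pi(\lambda)| \le \frac{C_K(R + |S_\delta|)}{r} + \eps.
\end{align*}
Letting $r \to \infty$ and then $\eps \to 0$ delivers the claimed locally uniform convergence. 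The main obstacle is the quantitative reformulation of Denisov's lemma: the compactness argument above is enough for the theorem as stated, but a direct bound of the shape $|P_*(r, \lambda) - \Pi(\lambda)| \le C_K\cdot \Phi\bigl(|P(r, \lambda_0)|, \int_r^\infty |P(s, \lambda_0)|^2\, ds\bigr)$, extracted by inspecting the proof of Lemma~8.6 in \cite{Denisov2009}, would be more transparent and might enable further quantitative refinements of the Krein theorem.
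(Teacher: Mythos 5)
Your route is genuinely different from the paper's. The paper proves the (weaker, modulus-only) fact that $\frac{1}{r}\int_0^r |P_*(\rho,\lambda)|\,d\rho \to |\Pi(\lambda)|$ via the Christoffel--Darboux identity, then exploits the reality of $a$ only at the single point $\lambda=i$ (where $P_*(r,i)$ and $\Pi(i)$ are both positive reals, so modulus convergence upgrades to actual convergence), and finishes with Montel plus the maximum principle applied to partial limits of $F_r/\Pi$. You instead invoke the stronger Lemma~8.6 of Denisov (Teplyaev's first conjecture), upgrade it to an effective $\eps$--$\delta$ statement by compactness, and run a density-one argument on $\R_+$ using the finiteness of $|\{s: |P(s,\lambda_0)|>\delta\}|$. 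Your scheme is arguably more direct and avoids complex-analysis machinery, at the cost of leaning on the harder input (Lemma~8.6 rather than the modulus Lemma~8.5).

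There is, however, a genuine gap in the step ``it follows that $|P_*(r,\lambda)|$ is locally uniformly bounded on $\Cm_+$, uniformly in $r\ge 0$.'' The Christoffel--Darboux identity gives
\begin{align*}
|P_*(r,\lambda)|^2 = |P(r,\lambda)|^2 + 2\Im(\lambda)\int_0^r |P(s,\lambda)|^2\,ds,
\end{align*}
and the integral term is bounded by $|\Pi(\lambda)|^2$, but $\int_0^\infty |P(s,\lambda)|^2\,ds<\infty$ does \emph{not} bound the pointwise term $|P(r,\lambda)|^2$ uniformly in $r$ (a continuous $L^2$ function need not be bounded, and here $a$ is only assumed $L^1_{\loc}$ with $\sigma$ in the Szeg\H{o} class, so you cannot run the Gronwall-type estimate that would work for $a\in L^1$ or $a\in L^2$). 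Note how the paper carefully avoids any pointwise bound on $|P_*|$, working instead with the Ces\`aro average $\frac{1}{r}\int_0^r |P_*|^2 \le \bigl(1+\frac{1}{2r\Im\lambda}\bigr)|\Pi(\lambda)|^2$. You use the unjustified pointwise bound in two places: to claim normality of $\{G_r\}$ (which you actually never use afterwards --- it is a red herring in your write-up), and, more importantly, to bound the integrand by a constant $C_K$ on the bad set $[0,R]\cup S_\delta$. The second use is the real issue, since $S_\delta$ has finite measure but is unbounded, so continuity alone does not bound $|P_*(s,\lambda)|$ there. The fix is to replace the sup-bound by a Cauchy--Schwarz bound: the contribution of the bad set to $\frac{1}{r}\int_0^r|P_*(s,\lambda)-\Pi(\lambda)|\,ds$ is at most
\begin{align*}
\frac{|\Pi(\lambda)|(R+|S_\delta|)}{r} + \frac{1}{r}\Bigl(\int_0^r|P_*(s,\lambda)|^2\,ds\Bigr)^{1/2}(R+|S_\delta|)^{1/2},
\end{align*}
and the second summand is $O(r^{-1/2})$ by the averaged bound on $|P_*|^2$. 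With that repair your argument closes; as currently written, it does not.
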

Notice that the limit in average of $P_*(r,\lambda)$ does not exist in general. In other words, there exists a function $a\in L^1_{\loc}$ such that the convergence in Theorem \ref{Teplyaev_average_convergence} does not hold, see discussion in the end of Section 6 in \cite{Teplyaev2005} .  

\subsection{Open problems}
It is unknown to us whether the convergence in Theorem \ref{MNT_convergence_for_OPUC} or in Theorem \ref{MNT_main_theorem} can be improved. We formulate two related conjectures.
\begin{conj}
Assume that the measure $\mu$ on the unit circle belongs to the Szeg\H{o} class on the unit circle. Let $\phi_n$ be the orthogonal polynomials associated with $\mu$. Then for almost every $t\in [-\pi,\pi]$ we have
\begin{align*}
    \frac{1}{n}\sum_{k = 0}^{n - 1}\phi_k^*(e^{it}) \to \frac{1}{\sqrt{ 2\pi \mu'(t)}},\quad n\to\infty.
\end{align*}
\end{conj}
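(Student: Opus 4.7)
The plan is to model the proof on Theorem \ref{Teplyaev_average_convergence}, the continuous analogue proved in the main body of this paper. The bridge is the Szeg\H{o} recursion
\[
\phi_{n+1}^*(z)=\frac{\phi_n^*(z)-\alpha_n z\,\phi_n(z)}{\sqrt{1-|\alpha_n|^2}},
\]
which is the exact discrete counterpart of the second equation of the Krein system \eqref{diff_system}: the Verblunsky parameters $\alpha_n$ play the role of the Krein coefficient $a(r)$, and the Szeg\H{o} condition $\sum_n|\alpha_n|^2<\infty$ is the discrete analogue of $a\in L^2(\R_+)$. For $z$ in the open disc $\D$, Theorem \ref{Szego_theorem} gives $\phi_n^*(z)\to\Pi(z)$ uniformly on compacts, so the Ces\`aro averages converge to $\Pi(z)$ as well; the whole difficulty is to pass to the boundary $z=e^{it}$ pointwise a.e.

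On the unit circle, Cauchy--Schwarz combined with Theorem \ref{MNT_convergence_for_OPUC} and the identity $|\phi_k^*(e^{it})|=|\phi_k(e^{it})|$ yields
\[
\limsup_{n\to\infty}\Bigl|\tfrac{1}{n}\textstyle\sum_{k=0}^{n-1}\phi_k^*(e^{it})\Bigr|^2\le \lim_{n\to\infty}\tfrac{1}{n}\textstyle\sum_{k=0}^{n-1}|\phi_k^*(e^{it})|^2=\tfrac{1}{2\pi\mu'(t)}
\]
for a.e.\ $t$, recovering the correct upper bound on the modulus. Equality in the limit requires an asymptotic alignment of the phases of $\phi_k^*(e^{it})$, which is the real content of the conjecture. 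To extract this alignment, I would iterate the recursion and decompose $\phi_n^*(e^{it})$ into a \emph{Szeg\H{o} main term} (built from $\Pi$ and the normalizing product $\prod_{k<n}(1-|\alpha_k|^2)^{-1/2}$) plus a remainder $E_n(t)$; the task then reduces to showing that the Ces\`aro average of $E_n$ tends to $0$ almost everywhere on $\T$. One would first prove $L^2(\T,d\theta)$-convergence of these averages using $\sum|\alpha_k|^2<\infty$ together with the $L^2$-stability of the recursion, then upgrade to pointwise a.e.\ via a maximal-function estimate --- a strategy close in spirit to the passage from $L^2$ to a.e.\ convergence in the proof of Theorem \ref{Teplyaev_average_convergence}.

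The main obstacle is precisely this final upgrade from $L^2$ to pointwise a.e.\ convergence on the boundary. Theorem \ref{Teplyaev_average_convergence} is stated only on compact subsets of $\Cm_+$, not on $\R$, precisely because boundary a.e.\ convergence is genuinely delicate. A robust proof will likely require either a quantitative decay of $E_n(t)$ stronger than what $\sum|\alpha_n|^2<\infty$ gives directly, or a Tauberian-type argument relating the Ces\`aro averages to radial limits of the analytic extension, combined with the Mate-Nevai-Totik control on $\tfrac{1}{n}\sum|\phi_k(e^{it})|^2$ to localize the argument in $t$ to the set of Lebesgue points of $\mu'$.
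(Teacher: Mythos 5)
This statement is not proved in the paper. It is stated as an open problem: the paragraph preceding it reads ``It is unknown to us whether the convergence in Theorem~\ref{MNT_convergence_for_OPUC} or in Theorem~\ref{MNT_main_theorem} can be improved. We formulate two related conjectures.'' So there is no paper proof to compare against, and your text should be judged as an independent attempt.

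As an attempt it is a strategy sketch, not a proof, and the genuine obstruction you identify --- passing from $\frac{1}{n}\sum_{k<n}\phi_k^*(z)\to\Pi(z)$ inside $\D$ to pointwise a.e.\ convergence on $\T$ --- is left open. Two concrete problems. First, you propose to imitate ``the passage from $L^2$ to a.e.\ convergence in the proof of Theorem~\ref{Teplyaev_average_convergence},'' but that proof contains no such passage: it works entirely in $\Cm_+$ by using the pointwise bound \eqref{ineq_for_F_r_lemma} to get local boundedness, invoking Montel to extract a limit $\tilde F$, pinning $\tilde F(i)=\Pi(i)$ via Corollary~\ref{equality_in_i} (which crucially uses that $a$ is real-valued so $P_*(r,i)>0$), and then applying the maximum-modulus principle to force $\tilde F=\Pi$. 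None of this touches the boundary, and the maximum-modulus step has no analogue on $\T$. Second, your Cauchy--Schwarz step only yields the one-sided bound $\limsup_n\bigl|\frac{1}{n}\sum_{k<n}\phi_k^*(e^{it})\bigr|\le(2\pi\mu'(t))^{-1/2}$; the conjecture asserts a two-sided limit, i.e.\ that the phases of $\phi_k^*(e^{it})$ align asymptotically. You name this issue but supply no mechanism for it: the decomposition into a ``Szeg\H{o} main term'' plus a remainder $E_n(t)$, the claimed $L^2(\T)$-convergence of the averaged remainders, and in particular the maximal-function estimate needed for the a.e.\ upgrade are all asserted, not constructed. That final estimate is precisely the unsolved content of the conjecture.
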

This may be connected with works \cite{aptekarev2015}, \cite{denisov2017}, \cite{denisov2018}.
\begin{conj}
Consider Krein system \eqref{diff_system}. Let $P, P_*$ be its solitions and let $\sigma$ be its spectral measure. Assume that $\sigma$ is in the Szeg\H{o} class on the real line. Then there exist a function $\xi\colon \R_+\to\Cm$ such that the convergence 
\begin{align*}
    \frac{1}{r}\int_0^r P(s,x)\xi(s)\, ds \to \frac{1}{\sqrt{2\pi \sigma'(x)}},\quad r\to\infty
\end{align*}
holds for almost every $x\in\R$. Additionally, the function $\xi$ can be chosen real-valued if the coefficient of the Krein system is real-valued.
\end{conj}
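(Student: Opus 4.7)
The first observation is that the target value $(2\pi\sigma'(x))^{-1/2}$ equals $|\Pi(x)|$ by \eqref{pi_modulus_on_the_boundary}, and the main result of the paper, Theorem~\ref{MNT_main_theorem}, gives $\frac{1}{r}\int_0^r|P(s,x)|^2\,ds\to|\Pi(x)|^2$ for a.e. $x$. Cauchy-Schwarz then yields
\[
\left|\frac{1}{r}\int_0^r P(s,x)\xi(s)\,ds\right|^2\le\left(\frac{1}{r}\int_0^r|P(s,x)|^2\,ds\right)\left(\frac{1}{r}\int_0^r|\xi(s)|^2\,ds\right),
\]
forcing any valid $\xi$ to satisfy $\limsup_{r\to\infty}\frac{1}{r}\int_0^r|\xi|^2\,ds\ge 1$ and Cauchy-Schwarz to be asymptotically saturated for a.e. $x$. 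This pins down the role of $\xi$: a function of modulus one acting as a ``universal phase'' whose argument aligns with $\overline{P(s,x)}/|P(s,x)|$ on average, simultaneously for almost every $x$.

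To construct such a $\xi$, the plan is to work via the spectral isometry $\mathcal{U}_\sigma\colon L^2(\R_+)\to L^2(\R,\sigma)$ and to engineer $\mathcal{U}_\sigma\xi=|\Pi|$ in a Ces\`aro/summability sense. The obstruction $|\Pi|\notin L^2(\R,\sigma)$ (which follows from $|\Pi|^2\sigma'\equiv(2\pi)^{-1}$ on the a.c.\ spectrum) forces a limiting procedure: define $\xi_N(s) = \int_{-N}^{N}|\Pi(x)|\overline{P(s,x)}\,d\sigma(x)$, so that Parseval gives $\mathcal{U}_\sigma\xi_N(y)=|\Pi(y)|\chi_{[-N,N]}(y)$ a.e. Reorganizing via the Christoffel-Darboux kernel $K_r(y,x)=\int_0^r\overline{P(s,x)}P(s,y)\,ds$ of the Krein system, which admits a closed form in $P,P_*$, one obtains, at least formally,
\[
\frac{1}{r}\int_0^r P(s,y)\xi(s)\,ds=\frac{1}{r}\int_\R|\Pi(x)|K_r(y,x)\,d\sigma(x),
\]
and the problem reduces to showing that $K_r(y,\cdot)/r$ acts as an approximate identity against $|\Pi|\,d\sigma$ centered at $y$ for a.e. $y$. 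The diagonal $K_r(y,y)/r\to(2\pi\sigma'(y))^{-1}$ is precisely Theorem~\ref{MNT_main_theorem}; the off-diagonal decay should be extracted from the CD identity combined with Theorem~\ref{Teplyaev_average_convergence} and the boundary identity $P(s,x)=e^{isx}\overline{P_*(s,x)}$ valid for $x\in\R$. Reality of $a$ forces $P(s,-\lambda)=\overline{P(s,\bar\lambda)}$, so taking the even part of $\xi$ in $x$ yields a real-valued representative, handling the last clause of the statement.

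The principal obstacle is transferring a weak or $L^2(\R,\sigma)$-averaged approximate-identity estimate to a pointwise a.e. statement in $y$. This is a maximal-function assertion intimately tied to Conjecture~\ref{muscalu_conj} on the pointwise boundedness of $P(\cdot,x)$, and appears to require fine quantitative control of the phase of $P(s,x)$ as $s\to\infty$ for almost every spectral parameter. The strong-asymptotics techniques developed in \cite{aptekarev2015,denisov2017,denisov2018} provide the most promising starting point, but a conclusive argument in this continuous setting seems beyond currently available methods.
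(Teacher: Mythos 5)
This statement is not a theorem in the paper: it is Conjecture~II in the ``Open problems'' subsection, presented without proof and only with the remark that it ``may be related to the results of \cite{bessonov2020}.'' There is therefore no argument of the paper's to compare yours against, and your own closing sentence --- that a conclusive proof ``seems beyond currently available methods'' --- is the correct assessment of the state of the art, which the paper's author shares.

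That said, your outline of the obstacles is accurate and worth a brief appraisal. The necessary-condition analysis via Cauchy--Schwarz is correct: since $\frac1r\int_0^r|P(s,x)|^2\,ds\to|\Pi(x)|^2$ a.e.\ by Theorem~\ref{MNT_main_theorem} and the target value is $|\Pi(x)|$, any admissible $\xi$ must have $\limsup_r\frac1r\int_0^r|\xi|^2\,ds\ge1$, and near-saturation of Cauchy--Schwarz is the relevant heuristic (not a rigorous forcing, since the conjecture does not require the Ces\`aro mean of $|\xi|^2$ to equal $1$). The identities you invoke are right: $P(r,x)=e^{irx}\overline{P_*(r,x)}$ for real $x$ follows from uniqueness for \eqref{diff_system}, and $P(r,-\lambda)=\overline{P(r,\bar\lambda)}$ when $a$ is real is the continuous analog of the real-coefficient symmetry for OPUC. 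Reformulating the claim as $\frac1r K_r(y,\cdot)$ being an approximate identity against $|\Pi|\,d\sigma$, with the diagonal controlled by Theorem~\ref{MNT_main_theorem}, is the natural framework. The genuine gap is exactly where you place it: upgrading an $L^2(\sigma)$ or Ces\`aro-averaged approximate-identity estimate to a pointwise a.e.\ statement in the spectral parameter requires a maximal inequality for the Christoffel--Darboux kernel that neither Theorem~\ref{MNT_main_theorem} nor Theorem~\ref{Teplyaev_average_convergence} provides, and that is tangled with Conjecture~\ref{muscalu_conj}. One technical point you should also flag in your own sketch: your family $\xi_N$ depends on the truncation parameter $N$, and passing from $\xi_N$ to a single function $\xi$ valid for a.e.\ $x$ requires a diagonal/exhaustion argument that is not automatic since $|\Pi|\notin L^2(\sigma)$; this adds another layer to the missing maximal estimate rather than circumventing it.
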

The latter conjecture may be related to the results of \cite{bessonov2020}.
\subsection{Structure of the paper}
In Section \ref{proof of MNT theorem} we provide some preliminaries on Krein systems and prove Theorem \ref{MNT_main_theorem}. Proof of Teplyaev conjecture, Theorem \ref{Teplyaev_average_convergence} is given in Section \ref{proof of Teplyaev conjecture}. In the next section we use Theorem \ref{MNT_main_theorem} to prove Theorem \ref{cesaro_boundness_for_dirac}. 

\section{Dirac equation. Proof of Theorem \ref{cesaro_boundness_for_dirac}}\label{proof of theorem 1}
Let us show how to derive Theorem \ref{cesaro_boundness_for_dirac} from Theorem \ref{MNT_main_theorem} using an appropriate transformation of Dirac differential equation \eqref{differential_equation} to Krein system \eqref{diff_system}. To do that, we need some notation. Consider system \eqref{diff_system} and let the functions $\phi, \psi, p,q$ be as follows:
\begin{align}
\label{phi_transform}
    \phi(r, \lambda) &= \frac{\exp(-i\lambda r)}{2}\left[P(2r,\lambda) + P_*(2r,\lambda)\right], \quad\phi(0,\lambda) = 1,
    \\ 
    \label{psi_transform}
    \psi(r, \lambda) &= \frac{\exp(-i\lambda r)}{2i}\left[P(2r,\lambda) - P_*(2r,\lambda)\right],\quad\psi(0,\lambda) = 0,
    \\ \nonumber
    p(r) &= -2\Re a(2r),\qquad q(r)  = 2\Im a(2r).
\end{align}
Direct calculations give that functions $\phi$ and $\psi$ are the solutions of the differential equation
\begin{align}\label{transformed_equation}
   \lambda
   \begin{pmatrix}
       \phi\\
       \psi
   \end{pmatrix} 
   =
   J
   \begin{pmatrix}
       \phi\\
       \psi
   \end{pmatrix} '
   + 
   Q
   \begin{pmatrix}
       \phi\\
       \psi
   \end{pmatrix} ,
   \, \phi(0,\lambda) = 1,\,\psi(0,\lambda) = 0,
   \\
   \nonumber
   J = 
    \begin{pmatrix}
        0& 1\\
        -1& 0
    \end{pmatrix},
    \quad
    Q =
    \begin{pmatrix}
        -q& p\\
        p& q  
    \end{pmatrix}.
\end{align}
Now we proceed to the proof of Theorem \ref{cesaro_boundness_for_dirac}. Construct a Krein system  with the coefficient $a(r) = -\frac{1}{2}p(r/2) + \frac{i}{2}q(r/2)$, where $p,q$ are the potentials in  equation \eqref{differential_equation}. Uniqueness theorem and \eqref{transformed_equation} give that functions $\phi$ and $\psi$ defined by \eqref{phi_transform} and \eqref{psi_transform} respectively are the unique solutions of equation \eqref{differential_equation}. Since potentials $p,q$ are in $L^2(\R_+)$, coefficient $a$ is also in $L^2(\R_+)$. Spectral measure of  the Krein system with square-summable coefficient belongs to the Szeg\H{o} class (see Theorem 11.1 in \cite{Denisov2009}). Hence, Theorem \ref{MNT_main_theorem} can be applied and the convergence
\begin{align*}
    \frac{1}{r}\int_0^r |P(s,x)|^2\, ds\to(2\pi \sigma'(x))^{-1},\quad \text{ as }r\to\infty,
\end{align*}
holds for almost every $x\in\R$. Consequently, for almost every $x\in\R$
\begin{align*}
    \sup_{r > 0}\frac{1}{r}\int_0^r |P(s,x)|^2\, ds < \infty.
\end{align*}
Finally,
\begin{align*}
    \sup_{r > 0}\frac{1}{r}\int_0^r |\phi(s,x)|^2ds &=  \sup_{r > 0}\frac{1}{r}\int_0^r \left|\frac{\exp(-ix s)}{2}\left[P(2s,x) + P_*(2s,x)\right]\right|^2ds  \\
    &= \sup_{r > 0}\frac{1}{r}\int_0^r \left|\frac{P(2s,x) + P_*(2s,x)}{2}\right|^2ds 
    \\
    &\le \sup_{r > 0}\frac{1}{r}\int_0^r \frac{\left| P(2s,x)\right|^2 + \left|P_*(2s,x)\right|^2}{2}\,ds 
    \\
    & = \sup_{r > 0}\frac{1}{r}\int_0^r \left| P(2s,x)\right|^2 ds < \infty
\end{align*}
The equality on the last line follows from Corollary \ref{CD_corollary_module} to Christoffel-Darboux formula (see Section \ref{C-D_section} below). Same argument works for $\psi$.

\section{Asymptotic behaviour of Krein system solutions on the real line. Proof of Theorem \ref{MNT_main_theorem}}\label{proof of MNT theorem}
\subsection{Preliminaries}\label{C-D_section}
Christoffel-Darboux formula is one of the most important identities in the theory of orthogonal polynomials on the unit circle, for details see  Theorem 2.2.7 in \cite{Simon} or in Theorem 11.4.2 in \cite{Szego}. We will use its continuous analog.
\begin{lm}[Christoffel-Darboux formula] Let $P$ and $P_*$ be solutions of the Krein system \eqref{diff_system}.The equalities 
\begin{align}\label{CD_formula}
    P(r,\lambda)\ol{P(r,\mu)} - P_*(r,\lambda)\ol{P_*(r,\mu)} = i(\lambda - \ol{\mu})\int_0^r P(s,\lambda)\ol{P(s,\mu)}\,ds,
    \\
    \label{CD_module}
    \left|P_*(r,\lambda)\right|^2 = \left|P(r,\lambda)\right|^2  + 2 \Im(\lambda)\int_0^r \left|P(s,\lambda)\right|^2ds
\end{align}
hold for any two complex numbers $\lambda,\mu$.
\end{lm}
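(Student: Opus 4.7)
The plan is to prove both identities simultaneously by differentiating the left-hand side of \eqref{CD_formula} in $r$ and matching it with the integrand. Since $P(r,\lambda)$, $P_*(r,\lambda)$, $\overline{P(r,\mu)}$, $\overline{P_*(r,\mu)}$ are absolutely continuous in $r$ (as $a\in L^1_{\loc}$), it suffices to compute the derivative almost everywhere and integrate from $0$ to $r$, using the initial conditions $P(0,\cdot)=P_*(0,\cdot)=1$ so that the boundary term vanishes.

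First I would write down the equations satisfied by the conjugates. From \eqref{diff_system} one obtains
\begin{align*}
    \partial_r \overline{P(r,\mu)} &= -i\overline{\mu}\,\overline{P(r,\mu)} - a(r)\,\overline{P_*(r,\mu)},\\
    \partial_r \overline{P_*(r,\mu)} &= -\overline{a(r)}\,\overline{P(r,\mu)}.
\end{align*}
Then I would apply the product rule to $P(r,\lambda)\overline{P(r,\mu)}$ and to $P_*(r,\lambda)\overline{P_*(r,\mu)}$. The key mechanism is that the off-diagonal cross-terms involving $a$ and $\overline{a}$ appear in both products with the same sign, so upon subtracting the two derivatives they cancel exactly, leaving only
\begin{align*}
    \partial_r\bigl[P(r,\lambda)\overline{P(r,\mu)} - P_*(r,\lambda)\overline{P_*(r,\mu)}\bigr] = i(\lambda - \overline{\mu})\,P(r,\lambda)\overline{P(r,\mu)}.
\end{align*}
Integrating this from $0$ to $r$ and using $P(0,\cdot)\overline{P(0,\cdot)} - P_*(0,\cdot)\overline{P_*(0,\cdot)} = 1 - 1 = 0$ yields \eqref{CD_formula}.

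Finally, \eqref{CD_module} is the specialization $\mu = \lambda$ of \eqref{CD_formula}: the prefactor becomes $i(\lambda - \overline{\lambda}) = -2\Im(\lambda)$, which after moving the $|P_*|^2$ term to the right gives the stated identity. There is no substantial obstacle here; the only point that deserves attention is justifying differentiation under the integral/pointwise manipulations when $a$ is merely locally integrable, which follows from absolute continuity of the solutions of \eqref{diff_system}.
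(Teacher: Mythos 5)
Your proof is correct and takes the same route as the paper: the paper's proof consists of the single remark that \eqref{CD_formula} follows by differentiating both sides and \eqref{CD_module} by substituting $\mu=\lambda$, which is exactly the computation you carry out in full. Your explicit display of the cancellation of the $a$ and $\overline{a}$ cross-terms, and the remark about absolute continuity for $a\in L^1_{\loc}$, merely fill in details the paper leaves to the reader.
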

\begin{proof}
Equation \eqref{CD_module} follows from \eqref{CD_formula} by substituting $\mu = \lambda$; \eqref{CD_formula} can be proved by calculation of derivatives in both sides of the equality.
\end{proof}
\begin{corol}\label{CD_corollary_module}
For any $\lambda\in\Cm$ the following is true:
\begin{enumerate}
    \item $|P_*(r,\lambda)| = |P(r,\lambda)|$ if $\lambda\in\R$ \label{CD_cor_1}
    \item $|P_*(r,\lambda)| > |P(r,\lambda)|$ if $\Im(\lambda) > 0$\label{CD_cor_2}
    \item $|P_*(r,\lambda)| < |P(r,\lambda)|$ if $\Im(\lambda) < 0$\label{CD_cor_3}
\end{enumerate}
\end{corol}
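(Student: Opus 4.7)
The plan is to deduce the corollary directly from the Christoffel--Darboux identity \eqref{CD_module}, which is already established in the lemma immediately preceding the corollary. That identity reads
\begin{align*}
    |P_*(r,\lambda)|^2 = |P(r,\lambda)|^2 + 2\Im(\lambda)\int_0^r |P(s,\lambda)|^2\, ds,
\end{align*}
so the sign of $|P_*(r,\lambda)|^2 - |P(r,\lambda)|^2$ is controlled entirely by the sign of $\Im(\lambda)$ times the nonnegative quantity $\int_0^r |P(s,\lambda)|^2\, ds$.

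For part \eqref{CD_cor_1}, $\lambda \in \R$ forces $\Im(\lambda)=0$, so the identity yields $|P_*(r,\lambda)|^2 = |P(r,\lambda)|^2$ immediately. For parts \eqref{CD_cor_2} and \eqref{CD_cor_3}, the sign of $2\Im(\lambda)$ is fixed, and the only thing to verify to obtain strict inequality is that the integral $\int_0^r |P(s,\lambda)|^2\, ds$ is strictly positive for every $r>0$. This follows from the initial condition $P(0,\lambda)=1$ in \eqref{diff_system}: since $s\mapsto P(s,\lambda)$ is continuous (it solves a linear ODE system with locally integrable coefficients), there is a neighborhood of $0$ on which $|P(s,\lambda)| \ge 1/2$, so the integral is positive for any $r>0$. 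Combining this with the sign of $\Im(\lambda)$ gives strict inequality in the appropriate direction.

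The proof is therefore a one-line substitution into \eqref{CD_module} followed by a standard continuity argument; I do not anticipate any real obstacle. The only minor subtlety worth flagging explicitly is that the corollary is implicitly understood to hold for $r>0$ (at $r=0$ one has $P(0,\lambda)=P_*(0,\lambda)=1$ for every $\lambda$, so the strict inequalities would fail there); this is harmless since the later application in the proof of Theorem \ref{cesaro_boundness_for_dirac} only needs the case $\lambda\in\R$, for which equality of moduli holds for all $r\ge 0$.
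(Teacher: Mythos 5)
Your proof is correct and follows exactly the approach the paper takes, which simply reads off the sign of $|P_*(r,\lambda)|^2 - |P(r,\lambda)|^2$ from the Christoffel--Darboux identity \eqref{CD_module}. You also supply the (elementary, but worth noting) justification of strict positivity of the integral via continuity and $P(0,\lambda)=1$, and flag the $r>0$ convention, neither of which the paper spells out.
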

\begin{proof}
All three statements easily follow from the second Christoffel-Darboux formula \eqref{CD_module}.
\end{proof}
\begin{corol}\label{no_zeros}
    $P_*(r, \cdot)$ has no zeros in closed half-plane $\ol{\Cm_+}=\{z\colon \Im z \ge 0\}$.
\end{corol}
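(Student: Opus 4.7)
The plan is to deduce the corollary directly from the second Christoffel–Darboux identity \eqref{CD_module}, which rewrites as
\begin{align*}
    |P_*(r,\lambda)|^2 = |P(r,\lambda)|^2 + 2\Im(\lambda)\int_0^r |P(s,\lambda)|^2\,ds,
\end{align*}
and every term on the right-hand side is non-negative when $\Im(\lambda)\ge 0$. So I would simply analyze what it means for the left-hand side to vanish in that regime and split into the two cases $\Im(\lambda)>0$ and $\Im(\lambda)=0$.

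For the strict upper half-plane case, if $P_*(r,\lambda)=0$ with $\Im(\lambda)>0$, then both non-negative summands on the right must vanish; in particular $\int_0^r |P(s,\lambda)|^2\,ds = 0$. Since $P(\cdot,\lambda)$ is continuous in $s$ (it solves a first-order ODE with integrable coefficient) and $P(0,\lambda)=1$, the integrand is strictly positive on some neighborhood of $0$, giving the required contradiction.

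For $\lambda\in\R$ the $\Im(\lambda)$ term drops out and \eqref{CD_module} collapses to $|P_*(r,\lambda)|=|P(r,\lambda)|$ (already recorded as part (1) of Corollary \ref{CD_corollary_module}), so $P_*(r,\lambda)=0$ forces $P(r,\lambda)=0$ as well. At this point I would invoke uniqueness of solutions for the Krein system viewed as a linear first-order ODE in $r$ with parameter $\lambda$: if the pair $(P(\cdot,\lambda),P_*(\cdot,\lambda))$ vanishes at a single point $r$, it must vanish identically, contradicting the normalization $P(0,\lambda)=P_*(0,\lambda)=1$.

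The only non-routine point is ensuring that the ODE uniqueness argument is applicable under the standing assumption $a\in L^1_{\loc}(\R_+)$; this is standard for Carathéodory systems but worth a one-line mention. Combining the two cases yields $P_*(r,\lambda)\neq 0$ for every $\lambda\in\overline{\Cm_+}$, which is exactly the claim.
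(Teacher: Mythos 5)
Your proof is correct and follows essentially the same route as the paper's: the upper half-plane case via the Christoffel--Darboux identity \eqref{CD_module} (the paper invokes part \ref{CD_cor_2} of Corollary \ref{CD_corollary_module}, which is the same strict inequality you derive by noting the integral term is positive since $P(0,\lambda)=1$), and the boundary case via $|P_*|=|P|$ on $\R$ together with uniqueness for the linear first-order system with non-zero initial data. Your aside on Carathéodory-type uniqueness for $a\in L^1_{\loc}$ is a reasonable technical remark but not a departure from the paper's argument.
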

\begin{proof}
From Part \ref{CD_cor_2} of Corollary \ref{CD_corollary_module} it follows that there are no roots in an open half-plane $\Cm_+$. Hence, to prove Corollary \ref{no_zeros} we need to show the absence of zeros on the real line. Assume the converse. Then there exists $x\in\R$ such that $P_*(r,x) = 0$. Part \ref{CD_cor_1} of Corollary \ref{CD_corollary_module} gives
\begin{align}\label{converse_of_the_boundary}
    P_*(r,x) = P(r,x) = 0.
\end{align}
On the other hand, the functions $P_*(\cdot,x), P(\cdot, x)$ are the solutions of linear differential system \eqref{diff_system} with non-zero initial values. Hence, \eqref{converse_of_the_boundary} cannot hold. This contradiction concludes the proof.
\end{proof}
\begin{knownlemma}
[S. Denisov Lemma 8.5, \cite{Denisov2009}]\label{lemma8.5}
If a measure $\sigma$ belongs to the Szeg\H{o} class and a sequence $r_n$ is such that  $P(r_n,\lambda_0)\to 0$ for some $\lambda_0\in \Cm_+$. Then the convergence $\left|P_*(r_n, \lambda)\right|\to\left|\Pi(\lambda)\right|$ holds uniformly in $\Cm_+$. 
\end{knownlemma}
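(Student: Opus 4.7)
The plan is to combine the second Christoffel--Darboux identity \eqref{CD_module} with a normal families argument applied to the harmonic functions $\log|P_*(r_n,\lambda)/\Pi(\lambda)|$, which are well-defined in $\Cm_+$ because $P_*(r,\cdot)$ has no zeros in $\ol{\Cm_+}$ by Corollary \ref{no_zeros} and $\Pi$ is outer. As a preliminary step I would establish the identity $|\Pi(\lambda)|^2 = 2\Im(\lambda)\int_0^\infty|P(s,\lambda)|^2\,ds$ for every $\lambda\in\Cm_+$. This follows from the fact that $|P(\cdot,\lambda)|^2\in L^1(\R_+)$ by Theorem \ref{Krein_theorem}(b): for each $\lambda$ one can pick an auxiliary sequence $s_k\to\infty$ with $P(s_k,\lambda)\to 0$, and \eqref{CD_module} yields $|P_*(s_k,\lambda)|^2\to 2\Im(\lambda)\int_0^\infty|P|^2\,ds$; one then matches this limit with $|\Pi(\lambda)|^2$ by extracting a Krein subsequence from $\{s_k\}$ via Theorem \ref{Krein_theorem}(c).

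Armed with this identity, I would rewrite \eqref{CD_module} at $\lambda=\lambda_0$ along the given sequence as $|P_*(r_n,\lambda_0)|^2 - |\Pi(\lambda_0)|^2 = |P(r_n,\lambda_0)|^2 - 2\Im(\lambda_0)\int_{r_n}^\infty|P(s,\lambda_0)|^2\,ds$. Both terms on the right tend to zero (the first by hypothesis, the second by integrability), so $|P_*(r_n,\lambda_0)|\to|\Pi(\lambda_0)|$. To promote this pointwise convergence to uniform convergence on compact subsets of $\Cm_+$, I would set $h_n(\lambda):=\log|P_*(r_n,\lambda)/\Pi(\lambda)|$; each $h_n$ is harmonic on $\Cm_+$, and the CD lower bound $|P_*(r_n,\lambda)|^2\ge 2\Im(\lambda)\int_0^{r_n}|P|^2\,ds$ combined with the preliminary identity gives $\liminf_n h_n\ge 0$ locally uniformly. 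Given a local upper bound on $h_n$, the family $\{h_n\}$ is normal; every subsequential limit $h$ is harmonic, nonnegative, and vanishes at $\lambda_0$, so the minimum principle forces $h\equiv 0$ and thus $h_n\to 0$ locally uniformly, which is exactly the claim.

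The main obstacle is the local upper bound on $h_n$, equivalently on $|P_*(r_n,\cdot)/\Pi|$. This should come from the Nevanlinna-class structure of both factors: $P_*(r,\cdot)$ is entire of exponential type at most $r$ with boundary modulus $|P_*(r,x)|=|P(r,x)|$ on $\R$ by part (1) of Corollary \ref{CD_corollary_module}, while $[(\lambda+i)\Pi(\lambda)]^{-1}\in H^2$ by Theorem \ref{Krein_theorem}. A Poisson representation of $\log|P_*(r,\lambda)|$ in terms of its boundary data $\log|P(r,x)|$, paired with \eqref{Pi_definition} for $\log|\Pi|$, should furnish the required bound once one controls how $|P(r,x)|^2 \sigma'(x)$ behaves on $\R$ as $r\to\infty$. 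This Nevanlinna-theoretic step, rather than the normal families or minimum-principle endgame, is where the real work lies.
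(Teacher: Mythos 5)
The paper imports this lemma from Denisov's survey and does not reprove it, so your sketch has to stand on its own. The architecture --- pass to the harmonic functions $h_n=\log|P_*(r_n,\cdot)/\Pi|$, use a lower bound from Christoffel--Darboux and the normalization at $\lambda_0$ to force identical vanishing --- is sound in outline, but the load-bearing preliminary step is where the argument begs the question. You propose to establish $|\Pi(\lambda)|^2 = 2\Im(\lambda)\int_0^\infty|P(s,\lambda)|^2\,ds$ \emph{before} Lemma~\ref{lemma8.5}; in this paper that identity is Corollary~\ref{cons_module_PI} and is \emph{derived from} Lemma~\ref{lemma8.5}. Your route to it (``pick $s_k$ with $P(s_k,\lambda)\to 0$, apply \eqref{CD_module}, then extract a Krein subsequence from $\{s_k\}$ via Theorem~\ref{Krein_theorem}(c)'') does not work as written: part~(c) only asserts the \emph{existence} of some sequence along which $P_*\to\Pi$; it gives you no license to declare that a subsequence of your auxiliary $\{s_k\}$ converges to $\Pi$ (or to $c\Pi$ with $|c|=1$). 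Identifying $\lim_k|P_*(s_k,\lambda)|$ with $|\Pi(\lambda)|$ from the hypothesis $P(s_k,\lambda)\to 0$ is precisely the pointwise content of Lemma~\ref{lemma8.5} itself, so you cannot assume it. You would need an independent uniqueness result for subsequential limits of $P_*(r,\cdot)$ --- a substantive piece of Krein theory that the paper does not supply and your sketch does not construct.

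On the endgame, the gap you flag (a local uniform \emph{upper} bound on $h_n$, to be wrested from Nevanlinna/Cartwright theory) is actually not needed, so part of your worry is misplaced. Once you grant the identity, \eqref{CD_module} gives $|P_*(r_n,\lambda)|^2 \ge 2\Im(\lambda)\int_0^{r_n}|P(s,\lambda)|^2\,ds$, and since (as the paper notes after Theorem~\ref{Krein_theorem}) the integral converges locally uniformly, you get $h_n\ge -\epsilon_n$ with $\epsilon_n\to 0$ locally uniformly, together with $h_n(\lambda_0)\to 0$. Harnack's inequality applied to the nonnegative harmonic functions $h_n+\epsilon_n$ then yields $h_n\to 0$ locally uniformly immediately --- no normal families, no minimum principle, no upper bound to chase. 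The genuine obstruction to your proposal is therefore the circularity in the preliminary identity, not the Nevanlinna-theoretic step you singled out.
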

\begin{corol}\label{cons_module_PI}
From equation \eqref{CD_module} and Lemma \ref{lemma8.5} we have
\begin{align*}
    |\Pi(\lambda)|^2 = 2 \Im(\lambda)\int_0^{\infty} \left|P(s,\lambda)\right|^2ds.
\end{align*}
\end{corol}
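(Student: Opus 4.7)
The plan is to choose, for each $\lambda\in\Cm_+$, a sequence $r_n\to\infty$ that simultaneously sends $P(r_n,\lambda)$ to zero (to kill the first term on the right of \eqref{CD_module}) and serves as the input to Lemma \ref{lemma8.5} with $\lambda_0=\lambda$ (to identify the left-hand side of \eqref{CD_module} in the limit). Fix $\lambda\in\Cm_+$. Since $\sigma$ lies in the Szeg\H{o} class, part $(b)$ of the Krein theorem together with the remark on uniform convergence over compact subsets of $\Cm_+$ yields $\int_0^{\infty}|P(s,\lambda)|^2\,ds<\infty$. Because $r\mapsto|P(r,\lambda)|^2$ is continuous and non-negative, this forces $\liminf_{r\to\infty}|P(r,\lambda)|=0$ (otherwise $|P(\cdot,\lambda)|^2$ would stay bounded below by a positive constant on a tail and the integral would diverge), so I can extract $r_n\to\infty$ with $P(r_n,\lambda)\to 0$.

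Next, I apply Lemma \ref{lemma8.5} taking the role of $\lambda_0$ to be this same $\lambda$. The lemma then delivers $|P_*(r_n,\mu)|\to|\Pi(\mu)|$ uniformly in $\mu\in\Cm_+$; specializing at $\mu=\lambda$ gives $|P_*(r_n,\lambda)|^2\to|\Pi(\lambda)|^2$.

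Finally, I pass to the limit $n\to\infty$ in the Christoffel--Darboux identity
\begin{align*}
    |P_*(r_n,\lambda)|^2 = |P(r_n,\lambda)|^2 + 2\Im(\lambda)\int_0^{r_n}|P(s,\lambda)|^2\,ds.
\end{align*}
The left side converges to $|\Pi(\lambda)|^2$, the first summand on the right vanishes by construction of $r_n$, and monotone convergence applied to the integral term produces $2\Im(\lambda)\int_0^{\infty}|P(s,\lambda)|^2\,ds$; rearranging then gives the identity claimed in Corollary \ref{cons_module_PI}. I do not anticipate a genuine obstacle here: the only conceptual point worth noting is that the sequence extracted from the integrability of $|P(\cdot,\lambda)|^2$ can simultaneously play the role required by Lemma \ref{lemma8.5}, which is achieved precisely by setting $\lambda_0=\lambda$.
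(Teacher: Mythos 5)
Your argument is correct and is precisely the one the paper leaves implicit: extract $r_n\to\infty$ with $P(r_n,\lambda)\to 0$ (guaranteed by part $(b)$ of the Krein theorem), invoke Lemma~\ref{lemma8.5} with $\lambda_0=\lambda$ to control the left side of \eqref{CD_module}, and pass to the limit. The only point you flesh out beyond what the paper states is the existence of the sequence $r_n$, which you justify correctly from the integrability of $|P(\cdot,\lambda)|^2$.
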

\begin{rem}
A similar relation in the theory of the orthogonal polynomials is given in Chapter 2.4 in \cite{Simon}.
\end{rem}
\subsection{An equivalent form of Theorem \ref{MNT_main_theorem}}
In this section we formulate Theorem \ref{MNT_main_theorem} in a form similar to original Mate-Nevai-Totik Theorem \ref{MNT_main}. By $PW_r$ denote the Paley-Wiener space with a spectrum in $[0,r]$, that is, a space of entire functions $f$ that can be represented as
\begin{align*}
    f(x) = \int_0^r\phi(s)e^{ixs}\,ds,\quad\phi\in L^2[0,r].
\end{align*}
Let a function $m_r$ be given by
\begin{align}\label{extremal_function}
    m_r(\sigma, z) &= \inf\left\{\frac{1}{|f(z)|^2}\int_{-\infty}^{\infty}|f(t)|^2\, d\sigma(t) \text{ }\bigg|\text{ }  f\in PW_r, f(z) \neq 0\right\},\quad z\in\Cm.
\end{align}
Similarly to \eqref{w_n_representation}, the function  $m_r$ can be calculated using the continuous analogs of the orthogonal polynomials. This is shown in the following lemma.
\begin{knownlemma}[S. Denisov, Lemma 8.2, \cite{Denisov2009}]\label{denisov m_a minimizer}
For any $z_0\in\Cm$ the following is true:
\begin{align*}
    m_r(\sigma,z_0) = K_r(z_0,z_0)^{-1}, \quad \text{ where } K_r(z',z) = \int_0^r \ol{P(s,z')}P(s,z) \, ds.
\end{align*}
Infimum in \eqref{extremal_function} is achieved by function
\begin{align*}
   f_r(z) &= \frac{K_r(z_0,z)}{K_r(z_0,z_0)},
\end{align*}
in particular, function $f_r$ itself belongs to  $PW_r$. 
\end{knownlemma}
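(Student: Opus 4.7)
The plan is to transfer the extremal problem defining $m_r(\sigma, z_0)$ from the Paley--Wiener space to the Hilbert space $L^2[0,r]$ via the isometry $\mathcal{U}_\sigma$, where it reduces to a one-line application of Cauchy--Schwarz. The key structural input is that the transform
\[
\mathcal{T}_r\colon L^2[0,r]\to PW_r,\qquad \mathcal{T}_r(\phi)(\lambda)=\int_0^r \phi(s)P(s,\lambda)\,ds,
\]
is a bijection. The inclusion $\mathcal{T}_r(L^2[0,r])\subset PW_r$ follows because $P(s,\cdot)$ has exponential type at most $s$ with spectrum in $[0,s]$; this is visible from the Volterra representation $P(s,\lambda)=e^{i\lambda s}+\int_0^s K(s,t)\,e^{i\lambda t}\,dt$, obtained by iterating \eqref{diff_system}. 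Surjectivity then reduces to inverting a triangular Volterra equation of the second kind in $L^2[0,r]$, which is uniquely solvable by the standard theory.

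Once the bijection is in place, the isometry $\mathcal{U}_\sigma$ gives $\|\mathcal{T}_r(\phi)\|_{L^2(\sigma)}=\|\phi\|_{L^2[0,r]}$. For every $f=\mathcal{T}_r(\phi)\in PW_r$ with $f(z_0)\neq 0$, Cauchy--Schwarz in $L^2[0,r]$ yields
\[
|f(z_0)|^2=\left|\int_0^r \phi(s)P(s,z_0)\,ds\right|^2 \le \|\phi\|^2_{L^2[0,r]}\, K_r(z_0,z_0)=\|f\|^2_{L^2(\sigma)}\, K_r(z_0,z_0),
\]
so $m_r(\sigma,z_0)\ge K_r(z_0,z_0)^{-1}$. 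Equality holds precisely when $\phi(s)=c\,\ol{P(s,z_0)}$, in which case $\mathcal{T}_r(\phi)(\lambda)=c\,K_r(z_0,\lambda)$. Normalizing so that the value at $z_0$ equals $1$ produces $f_r(z)=K_r(z_0,z)/K_r(z_0,z_0)$, which automatically lies in $PW_r$ as an element of $\mathcal{T}_r(L^2[0,r])$. This matches both assertions of the lemma.

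The main obstacle I expect is the surjectivity of $\mathcal{T}_r$: one must verify that every $f\in PW_r\cap L^2(\R,\sigma)$ can be written as $\mathcal{T}_r(\phi)$ for some $\phi\in L^2[0,r]$. For merely $a\in L^1_{\loc}$, the kernel $K(s,t)$ of the Volterra representation requires some care, but the associated integral operator is lower-triangular with an $L^2$-bounded Neumann series on each finite interval, so unique solvability should follow from the transformation and inversion results developed in Chapter~3 of \cite{Denisov2009}. After this identification, the Cauchy--Schwarz step and the identification of the extremizer are immediate.
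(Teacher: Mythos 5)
This lemma is stated in the paper as a known result (a \texttt{knownlemma} citing Denisov's survey, Lemma~8.2) and carries no proof, so there is nothing in the paper to compare against. Your reproducing-kernel argument --- transfer the extremal problem to $L^2[0,r]$ via the transform $\mathcal{T}_r$, which by the Volterra representation $P(s,\lambda)=e^{i\lambda s}+\int_0^s K(s,t)e^{i\lambda t}\,dt$ and the invertibility of $I+V$ on $L^2[0,r]$ is a bijection onto $PW_r$ and an isometry into $L^2(\sigma)$, then apply Cauchy--Schwarz to identify $K_r(z_0,z_0)^{-1}$ as the minimum with extremizer the normalized reproducing kernel --- is the standard and correct proof, and is the same mechanism as in the cited source; you correctly flag surjectivity of $\mathcal{T}_r$ as the only non-formal step and correctly defer it to the Volterra inversion theory in Chapter~3 of \cite{Denisov2009}.
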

\noindent Lemma gives
\begin{align*}
    m_r(\sigma, z) &= K_r(z,z)^{-1} = \left(\int_0^r |P(s,z)|^2 \,ds\right)^{-1},\\
     r m_r(\sigma,z) &= \left(\frac{1}{r}\int_0^r |P(s,z)|^2 \,ds\right)^{-1}.
\end{align*}
Substituting the latter equality into Theorem 
\ref{MNT_main_theorem} we get its equivalent form.
\begin{thm}[Equivalent form of Theorem \ref{MNT_main_theorem}]\label{equiv_theorem_polynomials_on_the boundary}
Consider a Krein system with a coefficient in $L^1_{\loc}(\R_+)$ such that its spectral measure $\sigma$ is in the Szeg\H{o} class on the real line. Then for almost all $z\in\R$:
\begin{align*}
\lim_{r\to\infty} r m_r(\sigma,z)= 2\pi \sigma'(z).
\end{align*}
\end{thm}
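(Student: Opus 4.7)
By Lemma \ref{denisov m_a minimizer} the statement is equivalent to Theorem \ref{MNT_main_theorem}, so I would work with the reformulation $\frac{1}{r}\int_0^r |P(s,x)|^2\,ds \to |\Pi(x)|^2 = (2\pi\sigma'(x))^{-1}$ a.e.\ on $\R$. The plan is to prove the matching inequalities $\limsup r m_r(\sigma, x) \le 2\pi \sigma'(x)$ and $\liminf r m_r(\sigma, x) \ge 2\pi \sigma'(x)$ separately. The overall strategy mirrors the original Mate-Nevai-Totik argument for OPUC (Theorem \ref{MNT_main}), with the role of the orthogonal polynomials $\phi_n, \phi_n^*$ taken by the Krein system solutions $P(s, \cdot), P_*(s, \cdot)$, and the role of the Szeg\H{o} function played by the outer function $\Pi$ from \eqref{Pi_definition}.

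For the upper bound I would substitute the explicit trial element $f_r(z) := \int_0^r e^{i(z-x)s}\,ds \in PW_r$ into the definition of $m_r$. A direct calculation gives $f_r(x) = r$ and $|f_r(t)|^2 = 4\sin^2(r(t-x)/2)/(t-x)^2$, whence
\[
r m_r(\sigma, x) \le \frac{r \int_\R |f_r|^2\,d\sigma}{|f_r(x)|^2} = 2\pi \int_\R F_r(t-x)\,d\sigma(t), \qquad F_r(u) := \frac{2\sin^2(ru/2)}{\pi r u^2},
\]
where $F_r \ge 0$ is a Fej\'er-type kernel on $\R$ with $\int_\R F_r = 1$ concentrated at scale $1/r$. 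Since $\{F_r\}$ is an approximate identity, the Lebesgue differentiation theorem yields $(F_r * \sigma)(x) \to \sigma'(x)$ at every Lebesgue point of $\sigma$, hence a.e.\ on $\R$.

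For the matching lower bound I would shift the spectral parameter off the real axis. Setting $\lambda_r := x + i/(2r)$ and applying the Christoffel-Darboux identity \eqref{CD_module} one has
\[
\frac{K_r(\lambda_r, \lambda_r)}{r} = |P_*(r, \lambda_r)|^2 - |P(r, \lambda_r)|^2.
\]
The proof then reduces to three claims: (a) $|P_*(r, \lambda_r)|^2 \to |\Pi(x)|^2$ a.e., (b) $|P(r, \lambda_r)|^2 \to 0$ a.e., and (c) $r^{-1}[K_r(\lambda_r, \lambda_r) - K_r(x, x)] \to 0$ a.e. Item (c) is a perturbative comparison at scale $|\lambda_r - x| = 1/(2r)$, while (b) can be approached through Corollary \ref{cons_module_PI}, which gives $\int_0^\infty |P(s, \lambda_r)|^2\,ds = r|\Pi(\lambda_r)|^2$, combined with a Ces\`aro-averaging argument. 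The main obstacle lies in (a): the Krein theorem only provides $P_*(r_n, \cdot) \to \Pi$ on compacts of $\Cm_+$ and only along subsequences, whereas one needs pointwise control as $r \to \infty$ with the imaginary part of the argument simultaneously shrinking like $1/r$. Resolving this joint limit requires combining Lemma \ref{lemma8.5} (subsequential uniform convergence of $|P_*|$ to $|\Pi|$ in $\overline{\Cm_+}$) with the Fatou boundary behavior of the outer function $\Pi$, together with a Lebesgue-differentiation-type extraction of a.e.\ convergence from merely Ces\`aro-averaged boundary control on $|P_*(r, \cdot)|^2$; this interchange of limits is the technical heart of the argument.
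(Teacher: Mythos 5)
Your upper bound follows the paper's proof almost verbatim: the test function $f_{r,z}(x)=R(r(x-z))$ with $R=\mathcal F(\mathbf 1_{[0,1]})$ produces the continuous Fej\'er kernel $\Phi_r$, and the approximate-identity property gives $\limsup r m_r(\sigma,z)\le 2\pi\sigma'(z)$ a.e.; that part is fine and is essentially identical to Lemma~\ref{easy_part_of_theorem}.

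Your proposed lower bound, however, has a concrete gap beyond the one you flag. Write $\lambda_r=x+i/(2r)$; then \eqref{CD_module} does give $K_r(\lambda_r,\lambda_r)/r=|P_*(r,\lambda_r)|^2-|P(r,\lambda_r)|^2$, and to pass from $K_r(\lambda_r,\lambda_r)$ to $K_r(x,x)$ you propose to prove (b) $|P(r,\lambda_r)|^2\to 0$ and (c) $r^{-1}\bigl[K_r(\lambda_r,\lambda_r)-K_r(x,x)\bigr]\to 0$ individually. Both are false even in the free case $a\equiv 0$: there $P(s,\lambda)=e^{i\lambda s}$, so $|P(r,\lambda_r)|^2=e^{-1}$ for every $r$, while $r^{-1}\bigl[K_r(\lambda_r,\lambda_r)-K_r(x,x)\bigr]=r^{-1}\bigl[r(1-e^{-1})-r\bigr]=-e^{-1}$ for every $r$. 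The two errors cancel exactly in that example, but that cancellation is precisely what your decomposition discards: you cannot treat (b) and (c) separately. On top of this, (a) — the joint limit $|P_*(r,\lambda_r)|\to|\Pi(x)|$ with $r\to\infty$ and $\Im\lambda_r\asymp 1/r\to 0$ simultaneously — is, as you yourself note, unestablished; Lemma~\ref{lemma8.5} gives only subsequential uniform convergence on compacts of $\Cm_+$, and upgrading that to a pointwise diagonal limit at scale $1/r$ is essentially as hard as the theorem. The paper sidesteps the joint-limit problem entirely: instead of evaluating $P_*$ near the boundary it works with the extremal function $f_r$ and the Szeg\H{o} function $D=(\sqrt{2\pi}\Pi)^{-1}$, proves a Cauchy-integral representation for $f_rD$ (Lemma~\ref{possibility_of_poisson}), bounds $f_r$ near $0$ via a Hadamard-factorization estimate for $PW_r$ functions with real zeros (Lemmas~\ref{lemma_with_gamma}, \ref{lemma_with_gamma_for_fa}, \ref{lemma 8}), and then compares the pairing $\int f_r H_r D$ against $2\pi i f_r(\delta i)D(\delta i)$ with $\delta=\eps/r$, using the Lebesgue-point property of $D$ to absorb the error at scale $b/r$. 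That Fourier-analytic route is what actually delivers the $\liminf$ inequality; your plan, as written, would not.
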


We prove Theorem \ref{equiv_theorem_polynomials_on_the boundary} by establishing two inequalities 
\begin{align}
    \label{lower_boundary_ineq}
    \limsup r m_r(\sigma,z)&\le 2\pi \sigma'(z),
    \\
    \label{upper_boundary_ineq}
    \liminf r m_r(\sigma,z)&\ge 2\pi \sigma'(z).
\end{align}
in Sections \ref{upper_bound} и \ref{lower_bound} respectively.

\subsection{Upper bound}\label{upper_bound}
\begin{lemma}\label{easy_part_of_theorem}
Assume that the measure $\sigma$ on the real line satisfies condition \eqref{first_condition_Szego_class}. Then the inequality
\begin{align*}
    \limsup r m_r(\sigma,z)\le 2\pi \sigma'(z) 
\end{align*}
holds for almost every $z\in \R$.
\begin{rem}
In the lemma we do not require $\sigma$ to belong to the Szeg\H{o} class. Hence, the inequality  in the lemma is stronger than \eqref{lower_boundary_ineq}.
\end{rem}
\begin{proof}
Consider functions 
\begin{align*}
    R(x) &= \frac{e^{ix} - 1}{ix} = \mathcal{F}\left(\mathbf{1}_{[0,1]}\right)(x),
    \\
    \Phi(x) &= |R(x)|^2.
\end{align*}
Notice that
\begin{align*}
    \mathcal{F}^{-1}\left(\Phi\right) = \mathcal{F}^{-1}\left(|R|^2\right) = \mathcal{F}^{-1}\left(R\right)*\mathcal{F}^{-1}\left(\ol{R}\right) =    \mathbf{1}_{[0,1]} *\mathbf{1}_{[-1,0]} =
    \begin{cases}
    0, &|x| > 1 \\
    1 - |x|, &|x|\le 1
    \end{cases},
\end{align*}
in other words, $\Phi$ is a continuous Fejer kernel. Approximation identity constructed with function $\Phi$ has powerful convergence properties. We believe the following result is folklore.
\begin{knownthm}[Continuous Fejer kernel property]\label{continuous_fejer_kernel}
The convergence
\begin{align*}
    \left(\Phi_r * \sigma \right)(z)\to 2\pi\sigma'(z), \quad r\to\infty,
\end{align*}
where $\Phi_r(x) = r\Phi(rx)$, holds for almost every $z\in \R$.
\end{knownthm}
Let us show how the lemma follows from Theorem \ref{continuous_fejer_kernel}. The function $f_{r,z}(x)= R(r(x - z))$ is in $PW_r$ and $f_{r,z}(z) = 1$. Hence
\begin{align*}
    r m_r(\sigma, z) \le r\int_{-\infty}^{\infty}|f_{r,z}(x)|^2\,d\sigma(x) = r\int_{-\infty}^{\infty}|R(r(x - z))|^2\,d\sigma(x) = (\Phi_r * \sigma)(z).
\end{align*}
It is only remains to pass to the limit superior on $r\to\infty$ and apply the property of the continuous Fejer kernel to conclude the proof.
\end{proof}
\end{lemma}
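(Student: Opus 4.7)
The plan is to exploit the variational characterization of $m_r(\sigma,z)$ and reduce the estimate to the classical pointwise a.e.\ convergence of a Fej\'er-type approximate identity convolved with the measure $\sigma$.

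First, to bound $r\,m_r(\sigma,z)$ from above I would plug a single well-chosen test function from $PW_r$ into the infimum definition of $m_r$. The most natural candidate is a rescaled and translated copy of the Fourier image of the characteristic function of $[0,1]$: set $f_{r,z}(x) = R(r(x-z))$ with $R(x) = (e^{ix}-1)/(ix)$. Then $f_{r,z}\in PW_r$ and $f_{r,z}(z)=1$, so the definition of $m_r$ immediately yields
\begin{align*}
r\,m_r(\sigma,z) \le r\int_{\R} |R(r(x-z))|^2\,d\sigma(x) = (\Phi_r * \sigma)(z),
\end{align*}
where $\Phi(x)=|R(x)|^2$ and $\Phi_r(x)=r\,\Phi(rx)$.

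Next I would identify $\Phi$ as the continuous Fej\'er kernel: its inverse Fourier transform is the triangular hat function $(1-|x|)_+$, which forces $\Phi\ge 0$, $\int_{\R}\Phi\,dx=2\pi$ (by Plancherel), and the decay $|\Phi(x)|\le 4/x^2$ at infinity. Hence $\Phi_r$ is a standard approximate identity with total mass $2\pi$ and with an integrable radially decreasing majorant. The Lebesgue differentiation theorem for such approximate identities (applied separately to the absolutely continuous and singular parts of $\sigma$) gives, for almost every $z\in\R$,
\begin{align*}
(\Phi_r * \sigma)(z)\longrightarrow 2\pi\,\sigma'(z)\quad\text{as } r\to\infty,
\end{align*}
and taking $\limsup$ in the inequality above completes the argument.

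The only point requiring real care is the a.e.\ convergence of $\Phi_r * \sigma$ itself in the presence of a non-trivial singular part of $\sigma$ and without the assumption that $\sigma$ is finite. One must verify that $(\Phi_r * \sigma_s)(z)\to 0$ for a.e.\ $z$, which is a classical consequence of a Vitali-type covering/maximal-function argument exploiting the $1/x^2$-decay of $\Phi$, and one must confirm that the tail behaviour of $\sigma$ does not spoil the convolution; the hypothesis \eqref{first_condition_Szego_class} is precisely what provides this tail control, because $\Phi_r(x-z)\lesssim 1/(r(x-z)^2)$ is integrable against $d\sigma(x)/(1+x^2)$. Once this folklore statement about the continuous Fej\'er kernel is granted, the lemma reduces to the one-line variational estimate displayed above.
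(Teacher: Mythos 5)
Your proposal is essentially the paper's own proof: you plug the rescaled Fej\'er test function $f_{r,z}(x)=R(r(x-z))$ into the variational definition of $m_r$, identify $\Phi=|R|^2$ as the continuous Fej\'er kernel via its triangular inverse Fourier transform, and invoke the a.e.\ convergence $(\Phi_r*\sigma)(z)\to 2\pi\sigma'(z)$. The paper cites that convergence as folklore without proof; your extra remarks on tail control from condition \eqref{first_condition_Szego_class} and the maximal-function argument for the singular part merely flesh out that cited fact rather than take a different route.
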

\subsection{Decay of functions from Paley-Wiener space}\label{pw_lemma}
In this section we prove result concerning functions of Paley-Wiener space; it is independent of the theory of Krein systems but will be useful to us. Let the elementary  Weierstrass factors $E_n$ be defined by formula
\begin{align*}
    E_n(z) =
    \begin{cases} 
    (1-z) & n=0, \\ (1-z)\exp \left( \frac{z^1}{1}+\frac{z^2}{2}+\cdots+\frac{z^n}{n} \right) & n > 0.
    \end{cases}
\end{align*}
\begin{thrm}[Hadamard factorization theorem]
Let $f$ be an entire function of finite order $\rho\ge 0$ and $\{a_n\}_{n = 0}^{\infty}$ be its non-zero roots with multiplicity. Then $f$ admits a factorization
\begin{align*}
    f(z) = e^{g(z)}z^m \prod_{k = 0}^{\infty}E_d\left(\frac{z}{a_k}\right), 
\end{align*}
where $g$ is a polynomial with degree not greater than $\rho$ and  $d=[\rho]$.
\end{thrm}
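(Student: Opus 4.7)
The plan is to separate $f$ into a canonical Weierstrass product built from the zeros and an exponential factor $e^g$, and then to force $g$ to be a polynomial of degree at most $[\rho]$. First I would control the distribution of zeros via Jensen's formula applied to $f(z)/z^m$: since $\log M_f(r) \leq C_\epsilon r^{\rho+\epsilon}$ for every $\epsilon > 0$ by the definition of order, Jensen gives $n(r)\log 2 \leq \log M_f(2r) \leq C'_\epsilon r^{\rho+\epsilon}$, where $n(r)$ counts the nonzero roots $a_k$ with $|a_k|\leq r$. An Abel summation then yields $\sum_k |a_k|^{-s} < \infty$ for every $s > \rho$; in particular $\sum_k |a_k|^{-(d+1)} < \infty$ with $d=[\rho]$, which is exactly the convergence condition for the canonical product $P(z)=\prod_k E_d(z/a_k)$. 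Using the standard elementary estimate $|E_d(w)|\leq \exp(C|w|^{d+1})$ for $|w|\leq 1/2$ and a routine bound in the far region, one checks that $P$ itself has order at most $\rho$.

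Having built $P$, I would form $h(z)=f(z)/(z^m P(z))$. Since the zero sets and multiplicities match, $h$ is entire and nowhere zero, so it possesses an entire logarithm $h = e^g$ for some entire function $g$. The task reduces to showing $\deg g \leq [\rho]$. For this one needs a two-sided growth bound on $P$. The upper bound $|P(z)| \leq \exp(C_\epsilon |z|^{\rho+\epsilon})$ follows from the Weierstrass estimates above; the nontrivial ingredient is a minimum modulus theorem: outside a small exceptional set of disks around the zeros $a_k$, one has $|P(z)| \geq \exp(-C_\epsilon |z|^{\rho+\epsilon})$. Together with $|f(z)|\leq \exp(C_\epsilon |z|^{\rho+\epsilon})$, these bounds give $\Re g(z) = \log|h(z)| \leq C'_\epsilon |z|^{\rho+\epsilon}$ on a sequence of circles $|z|=r_n\to\infty$ that dodges the excluded disks.

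To pass from a one-sided bound on $\Re g$ to a bound on $|g|$, I would invoke the Borel--Carathéodory inequality: if $g$ is entire and $\Re g(z) \leq A(r)$ on $|z|=r$, then $|g(z)| \leq 2A(r) + 3|g(0)|$ on $|z|=r/2$. Applying this along $r_n\to\infty$ gives $|g(z)|\leq C''_\epsilon |z|^{\rho+\epsilon}$ on arbitrarily large circles, and the Cauchy estimates for the Taylor coefficients of $g$ force every coefficient of $z^n$ with $n>\rho+\epsilon$ to vanish. Letting $\epsilon\downarrow 0$ identifies $g$ as a polynomial of degree at most $[\rho]\leq \rho$, completing the factorization. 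I expect the main obstacle to be the minimum modulus theorem for the canonical product: the argument requires a careful excision of disks of radii $|a_k|^{-d-1}$ around each zero and a quantitative handling of the tails of the products $\prod_{|a_k|<2r}$ and $\prod_{|a_k|\geq 2r}$, including the case when $\rho$ is an integer and the exponential polynomials in $E_d$ only just fail to dominate the tail.
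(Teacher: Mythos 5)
The paper does not prove this theorem: it is quoted verbatim as a classical background result (as is Lemma~\ref{lemma8.5}, the Szeg\H{o} theorem, etc.) and used only inside the proof of Lemma~\ref{lemma_with_gamma}. So there is no ``paper's proof'' to compare against; one can only check whether your sketch is sound.

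Your outline is correct and is one of the two standard routes to Hadamard's theorem. The chain you describe — Jensen's inequality to bound the counting function $n(r)$, Abel summation to get $\sum_k |a_k|^{-s}<\infty$ for $s>\rho$ (so in particular for $s=[\rho]+1$, since $[\rho]+1>\rho$ always), convergence of the canonical product $P=\prod_k E_{[\rho]}(z/a_k)$ of order $\le\rho$, the entire nowhere-vanishing quotient $h=f/(z^mP)=e^g$, a minimum-modulus lower bound for $P$ off a union of small disks of summable radii, Borel--Carathéodory to upgrade the bound on $\Re g$ to a bound on $|g|$ along a sequence of circles that dodge the excluded disks, and finally Cauchy estimates to kill all Taylor coefficients of $g$ of index $>\rho$ — gives exactly $\deg g\le[\rho]$. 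You correctly flag the minimum-modulus estimate as the only genuinely delicate point, and in particular the borderline case of integer $\rho$ where the convergence exponent may equal $\rho$. One small caveat: the exceptional radii you name, $|a_k|^{-d-1}$, should be chosen of the form $|a_k|^{-H}$ with $H$ large enough that $\sum_k|a_k|^{-H}<\infty$ \emph{and} that the log-minus contribution of the near zeros stays $O(r^{\rho+\epsilon})$; the precise exponent matters in the bookkeeping, though the plan is unaffected. For reference, a somewhat shorter route bypasses the minimum-modulus lemma altogether by differentiating: one shows directly from the Poisson--Jensen formula (or from Borel--Carath\'eodory applied to a branch of $\log f$ on suitable disks) that $\left(\dfrac{d}{dz}\right)^{d+1}\log f(z)=-d!\sum_k (a_k-z)^{-(d+1)}$ plus a term from the zero at the origin, with no surviving contribution from $g$, forcing $g^{(d+1)}\equiv 0$; this avoids excising disks but requires a little more care with branches. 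Either way, your argument is a correct proof of the statement as written.
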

\begin{lemma}\label{lemma_with_gamma}
Let $f$ be a function from $PW_r$. Suppose $f(0)\neq 0$ and every zero of $f$ is a real number. Then the inequality
\begin{align*}
    |f(t)|\le |f(t + \gamma i)|e^{\gamma r} 
\end{align*}
holds for every $t\in\R$ and $\gamma > 0$.
\begin{proof}
Denote roots of $f$ with multiplicity by $\{a_n\}_{n = 0}^{\infty}$. Functions from Paley-Wiener space are of order $1$. Hence, by the Hadamard factorization theorem, it follows that $f$ can be represented as
\begin{align}\label{hadamard_fact_for_PW}
    f(z) = e^{c_1z + c_2}\prod_{k = 0}^{\infty}\left(1-\frac{z}{a_k}\right)e^{\frac{z}{a_k}}.
\end{align}
If $f(t) = 0$ the inequality in lemma is trivial, otherwise we can write
\begin{align}\label{im(c_1)_ineq}
    \nonumber\left|\frac{f(t  +\gamma i)}{f(t)}\right| &= \left|e^{c_1\cdot \gamma i}\right|\prod_{k = 0}^{\infty}\left|\frac{1 - \frac{t + \gamma i}{a_k}}{1 - \frac{t}{a_k}}e^{\frac{\gamma i}{a_k}}\right| = \left|e^{c_1\cdot \gamma i}\right| \prod_{k = 0}^{\infty}\left|\frac{a_k - t -\gamma i}{a_k - t}\right| 
    \\
    &= \left|e^{c_1\cdot \gamma i}\right| \prod_{k = 0}^{\infty}\left|1 - \frac{\gamma i}{a_k - t}\right| = e^{-\Im(c_1)\gamma}\prod_{k = 0}^{\infty}\left|1 + \frac{\gamma^2}{(a_k - t)^2}\right|^{1/2} \ge e^{-\Im(c_1)\gamma}.
\end{align}
Choose a large real number $N$ and substitute $-iN$ for $z$ into \eqref{hadamard_fact_for_PW}.
\begin{align*}
    |f(-iN)| &= \left|e^{-c_1iN + c_2}\prod_{k = 0}^{\infty}\left(1-\frac{-iN}{a_k}\right)e^{\frac{-iN}{a_k}}\right| 
    \\
    &= e^{N \Im c_1 + \Re c_2 }\prod_{k = 0}^{\infty}\sqrt{1 + \frac{N^2}{a_k^2}}\ge e^{N \Im c_1  + \Re c_2}.
\end{align*}
Since $f$ belongs to $PW_r$, its exponential type is not greater than $r$. So 
\begin{align*}
    r \ge \limsup_{N\to\infty}\frac{\log |f(-iN)|}{N}\ge \limsup_{N\to\infty} \frac{N\Im c_1  +\Re c_2 }{N} = \Im c_1.
\end{align*}
The latter inequality together with inequality $\eqref{im(c_1)_ineq}$ completes the proof.
\end{proof}
\end{lemma}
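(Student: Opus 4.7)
The natural approach is to extract information from a Hadamard factorization of $f$, exploiting the crucial hypothesis that every zero is real. Since every element of $PW_r$ is of exponential type at most $r$, hence of order at most $1$, I would invoke the Hadamard factorization theorem to write
$$f(z) = e^{c_1 z + c_2}\prod_{k}\left(1-\frac{z}{a_k}\right)e^{z/a_k},\qquad a_k\in\R,$$
with the product converging because the zero-counting exponent in the Weierstrass factors matches the order.

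Next I would form the ratio $|f(t+i\gamma)/f(t)|$ and exploit the reality of the $a_k$. The correction exponentials $e^{z/a_k}$ contribute modulus $1$ under the purely imaginary shift $z\mapsto z+i\gamma$, while each Weierstrass factor contributes
$$\left|\frac{a_k - t - i\gamma}{a_k - t}\right| = \sqrt{1+\frac{\gamma^2}{(a_k-t)^2}}\;\ge\;1.$$
The only remaining piece is the prefactor $|e^{i c_1 \gamma}| = e^{-\gamma \Im c_1}$. Combining these gives $|f(t+i\gamma)/f(t)| \ge e^{-\gamma \Im c_1}$, which rearranges to $|f(t)| \le |f(t+i\gamma)|\,e^{\gamma \Im c_1}$. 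If $f(t)=0$ the desired inequality is trivial, so this chain of equalities is legitimate.

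The main obstacle, and the place where the hypothesis $f\in PW_r$ really enters, is to show $\Im c_1 \le r$. To extract this I would test the factorization along the negative imaginary axis: substituting $z=-iN$ and using again that the Weierstrass factors have modulus at least $1$, I get $|f(-iN)| \ge e^{N\Im c_1 + \Re c_2}$. Since the exponential type of $f$ is bounded by $r$, we have $\limsup_{N\to\infty} N^{-1}\log|f(-iN)| \le r$, which forces $\Im c_1 \le r$. Chaining this with the previous step yields the claimed bound $|f(t)|\le |f(t+i\gamma)|e^{\gamma r}$. The delicate points to verify en route are that the degree of the polynomial in the exponential prefactor is indeed $1$ and that the reality of the zeros is what makes the $e^{z/a_k}$ corrections disappear upon taking moduli; without the real-zeros hypothesis one cannot conclude that the product factors pointing into the upper half-plane all have modulus $\ge 1$.
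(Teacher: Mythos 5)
Your proposal is correct and follows essentially the same route as the paper: Hadamard factorization with real zeros, the observation that the purely imaginary shift makes each genus-one factor have modulus at least one while the $e^{z/a_k}$ corrections cancel, and extraction of the bound $\Im c_1\le r$ by sending $z=-iN$ down the imaginary axis and comparing with the exponential type. No gaps; the two arguments are line-for-line parallel.
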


\subsection{Lower bound}\label{lower_bound}
Throughout this section we suppose that the measure $\sigma$ belongs to the Szeg\H{o} class on the real line. Consider the Szeg\H{o} function $D$ defined by the equality
\begin{align}\label{szego_function_definition}
    D(z)  = \left(\sqrt{2\pi}\Pi(z)\right)^{-1}.
\end{align}
Now if we recall \eqref{pi_modulus_on_the_boundary}, we get that
\begin{align}\label{D module on the boundary}
    |D(z)|^2 &= \sigma'(z)
\end{align}
holds for almost all $z \in \R$. Taking this into account, we see that inequality \eqref{upper_boundary_ineq} is equivalent to
\begin{align}\label{infimum_inequality_with_d}
    \liminf rm_r(\sigma, z)\ge 2\pi|D(z)|^2.
\end{align}
The singular part of $\sigma$ only increases the left side in \eqref{infimum_inequality_with_d} so it is suffices to show \eqref{infimum_inequality_with_d} only for absolutely continuous measures $\sigma$. Hence, without loss of generality, we may assume that $\sigma$ is absolutely continuous with respect to the Lebesgue measure and $d\sigma(x) = |D(x)|^2dx.$

Next, let us introduce some constraints for the point $z$. First, assume that $z$ is a Lebesgue point of  $D$, i.e.,
\begin{align}\label{0_is_lebesgue_point}
    \frac{1}{h}\ilim_{|x - z| < h}|D(x) - D(z)|\,dx\to 0,\quad h\to 0.
\end{align}
Secondly, assume that $D$ has non-tangential boundary values in $z$.
Both of this constraints are satisfied on a set of a full Lebesgue measure; without loss of generality, we may assume that $z$ satisfies the constraints and $z= 0$.   Let us remember  that $f_r$ is a function for which a value of $m_r(\sigma,0)$ is achieved (see Lemma \ref{denisov m_a minimizer}). That means
\begin{align}\label{integral_f_r(x)D(x)}
    \ilim_{\R}\left|f_r(x)\right|^2 d\sigma(x) = \ilim_{\R}\left|f_r(x)D(x)\right|^2 dx = m_r(0)|f_r(0)|^2.
\end{align}
 Fix $\eps > 0$. To prove \eqref{infimum_inequality_with_d}, we need to show that for a large enough $r$
\begin{align}\label{most_wanted}
    rm_r(\sigma,0)>(2\pi- 100\eps)|D(0)|^2 .
\end{align}
In addition,  we may assume that
\begin{align}\label{trivial_condition}
    rm_r(\sigma,0)<2\pi|D(0)|^2,
\end{align}
otherwise there is nothing left to prove.
\subsubsection{Supporting lemmas}
\begin{lemma}\label{lemma_with_gamma_for_fa}
Let $f_r$ be a function defined in Lemma \ref{denisov m_a minimizer}.  For almost every $t\in\R$ and $\gamma > 0$ 
\begin{align*}
    |f_r(t)|\le |f_r(t + \gamma i)|e^{\gamma r}.
\end{align*}
\end{lemma}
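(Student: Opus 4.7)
The plan is to reduce this to the previous Lemma \ref{lemma_with_gamma} by first establishing that $f_r$ has no zeros in the open upper half-plane $\Cm_+$, so that all zeros $a_k = \alpha_k - i\mu_k$ of $f_r$ satisfy $\mu_k \geq 0$, and then carefully redoing the Hadamard-factorization bookkeeping of that lemma in the presence of complex zeros in $\overline{\Cm_-}$.

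First I verify that $f_r$ has no zeros in $\Cm_+$. From Lemma \ref{denisov m_a minimizer} applied with $z_0 = 0$ and the Christoffel--Darboux identity \eqref{CD_formula},
\begin{align*}
    f_r(z) = \frac{K_r(0, z)}{K_r(0, 0)} = \frac{P(r, z)\overline{P(r,0)} - P_*(r,z)\overline{P_*(r, 0)}}{i z K_r(0, 0)}.
\end{align*}
If $f_r(z_0) = 0$ for some $z_0 \in \Cm_+$, this ratio would give $|P(r, z_0)/P_*(r, z_0)| = |P_*(r, 0)/P(r, 0)|$. By Part \ref{CD_cor_2} of Corollary \ref{CD_corollary_module} the left-hand side is strictly less than $1$, while by Part \ref{CD_cor_1} the right-hand side equals $1$, a contradiction. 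Hence every zero of $f_r$ has non-positive imaginary part.

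Second, I repeat the Hadamard computation from the proof of Lemma \ref{lemma_with_gamma} without assuming the $a_k$ are real. A direct calculation with the Weierstrass factors $(1 - z/a_k) e^{z/a_k}$ shows, for $t \in \R$ with $f_r(t) \neq 0$,
\begin{align*}
    \log\left|\frac{f_r(t + i\gamma)}{f_r(t)}\right| = -\gamma\,\Im(c_1) + \sum_k\left[\tfrac{1}{2}\log\left(1 + \frac{\gamma^2 + 2\gamma\mu_k}{|a_k - t|^2}\right) - \frac{\gamma\mu_k}{|a_k|^2}\right].
\end{align*}
Since $\mu_k \geq 0$, each logarithmic summand is non-negative and can be dropped, producing the lower bound $\geq -\gamma\bigl(\Im(c_1) + \sum_k \mu_k/|a_k|^2\bigr)$.

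To close the argument one needs $\Im(c_1) + \sum_k \mu_k/|a_k|^2 \leq r$. This follows by evaluating the Hadamard factorization at $z = -iN$ exactly as in the last step of Lemma \ref{lemma_with_gamma}: the extra factors $|e^{-iN/a_k}| = e^{N\mu_k/|a_k|^2}$ (which were trivial for real $a_k$) combine with $e^{N\Im c_1 + \Re c_2}$ to yield an asymptotic lower bound
\begin{align*}
    \log|f_r(-iN)| \geq N\left(\Im(c_1) + \sum_k \frac{\mu_k}{|a_k|^2}\right) + o(N),
\end{align*}
which, compared against the exponential-type estimate $\limsup_N \log|f_r(-iN)|/N \leq r$ coming from $f_r \in PW_r$, gives the claim. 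Substituting back yields $|f_r(t)| \leq |f_r(t + i\gamma)|e^{r\gamma}$ on the full-measure set where $f_r(t) \neq 0$; the inequality is trivial at the (at most countable) real zeros of $f_r$. The main technical obstacle is the last step: one must control the Weierstrass factors $|1 + iN/a_k|$ uniformly in $N$ and verify that $\sum_k \mu_k/|a_k|^2$ converges, which amounts to a Phragm\'en--Lindel\"of-indicator computation for $f_r \in PW_r$; once this bookkeeping is in place, the rest is a direct extension of Lemma \ref{lemma_with_gamma}.
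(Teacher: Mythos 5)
Your first step is correct but stops halfway: the same Christoffel--Darboux argument that rules out zeros of $f_r$ in $\Cm_+$ rules them out in $\Cm_-$ as well. Indeed, if $f_r(z_0)=0$ with $z_0\in\Cm_-$, then $K_r(0,z_0)=0$, so by \eqref{CD_formula} we get $|P(r,z_0)P(r,0)|=|P_*(r,z_0)P_*(r,0)|$; dividing by the nonzero $|P(r,0)|=|P_*(r,0)|$ (Part~\ref{CD_cor_1}) gives $|P(r,z_0)|=|P_*(r,z_0)|$, which contradicts Part~\ref{CD_cor_3} exactly as Part~\ref{CD_cor_2} was contradicted in the upper half-plane. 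Thus $f_r$ has only real zeros, and Lemma~\ref{lemma_with_gamma} applies verbatim --- this is precisely what the paper's proof does. By missing the symmetric half of the argument you set yourself a strictly harder problem than necessary.

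Because you only excluded $\Cm_+$, you are forced to redo Lemma~\ref{lemma_with_gamma} allowing zeros $a_k=\alpha_k-i\mu_k$ with $\mu_k>0$, and this is where a genuine gap appears. Your pointwise computation of $\log|f_r(t+i\gamma)/f_r(t)|$ is fine, but the conclusion hinges on $\Im(c_1)+\sum_k\mu_k/|a_k|^2\le r$, for which you would need $\liminf_{N\to\infty}N^{-1}\sum_k\log|1+iN/a_k|\ge 0$. Unlike the real-zero case, the individual factors are no longer $\ge 1$: since $|1+iN/a_k|^2=1-2N\mu_k/|a_k|^2+N^2/|a_k|^2$, each term with $\mu_k>0$ is negative for $0<N<2\mu_k$, and controlling the infinite sum requires information about how the $\mu_k$ are distributed (a Lindel\"of/indicator argument you explicitly flag as ``the main technical obstacle'' but do not carry out). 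As written the proof is therefore incomplete, and the cleanest fix is not to push this bookkeeping through but to observe, as above, that there are no non-real zeros at all.
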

\begin{proof}
It is enough to show that $f_r$ satisfies all of the conditions of Lemma $\ref{lemma_with_gamma}$.
Recall that by definition 
\begin{align*}
    f_r(\lambda) &= \frac{K_r(0,\lambda)}{K_r(0,0)},\text{ where } K_r(\lambda',\lambda) = \int_0^r \ol{P(s,\lambda')}P(s,\lambda)\,ds.
\end{align*}
From Lemma \ref{denisov m_a minimizer} we already know that $f_r$ belongs to the Paley-Wiener space $PW_r$; obviously $f_r(0) = 1$. Hence, the only nontrivial question is the absence of zeros outside of the real line. Assume the converse. Then there exists $\lambda\notin \R$ such that $f_r(\lambda) = 0$. Therefore, 
\begin{align*}
    K_r(0, \lambda) =\int_0^r \ol{P(s,0)}P(s,\lambda) \,ds = 0
\end{align*}
and the Christoffel-Darboux formula \eqref{CD_formula} gives
\begin{align*}
    P(r,\lambda)\ol{P(r,0)} - P_*(r,\lambda)\ol{P_*(r,0)} = 0,\\
    \left|P(r,\lambda)P(r,0)\right| = |P_*(r,\lambda)P_*(r,0)|.
\end{align*}
The latter equality contradicts Corollary \ref{CD_corollary_module} of the Christoffel-Darboux formula.
\end{proof}
\begin{lemma}\label{function_tends_to_zero}
For every point $z\in\Cm_+$ the following inequality holds:
\begin{align*}
    |f_r(z)D(z)|\le K_r(0,0)^{-\frac{1}{2}}(4\pi \Im(z))^{-\frac{1}{2}},
\end{align*}
in particular, for any $c > 0$ the function $f_rD$ is bounded in the half-plane $\{z\colon \Im(z) > c\}$.
\end{lemma}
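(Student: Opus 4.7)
The bound to prove factors naturally as a product of an estimate on $|f_r(z)|$ and an estimate on $|D(z)|$, so my plan is to bound each factor separately and multiply.

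For the $f_r$ factor I would apply the Cauchy–Schwarz inequality to the reproducing kernel. By definition
\begin{align*}
|K_r(0,z)|^2 = \left|\int_0^r \ol{P(s,0)}P(s,z)\,ds\right|^2 \le K_r(0,0)\int_0^r |P(s,z)|^2\,ds,
\end{align*}
so, dividing by $K_r(0,0)^2$ and bounding the truncated integral by its limit as $r\to\infty$,
\begin{align*}
|f_r(z)|^2 \le \frac{1}{K_r(0,0)}\int_0^\infty |P(s,z)|^2\,ds.
\end{align*}

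For the $D$ factor, recall $D = (\sqrt{2\pi}\,\Pi)^{-1}$, so $|D(z)|^2 = (2\pi |\Pi(z)|^2)^{-1}$, and Corollary \ref{cons_module_PI} gives
\begin{align*}
|\Pi(z)|^2 = 2\Im(z)\int_0^\infty |P(s,z)|^2\,ds,
\end{align*}
so that
\begin{align*}
|D(z)|^2 = \frac{1}{4\pi\Im(z)\int_0^\infty |P(s,z)|^2\,ds}.
\end{align*}

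Multiplying the two estimates, the integral $\int_0^\infty |P(s,z)|^2\,ds$ cancels and we obtain
\begin{align*}
|f_r(z)D(z)|^2 \le \frac{1}{4\pi\Im(z)\,K_r(0,0)},
\end{align*}
which is precisely the stated inequality after taking a square root. The boundedness assertion in the half-plane $\{\Im z > c\}$ is then immediate: $K_r(0,0)$ is a strictly positive constant (since $P(0,0)=1$ makes $|P(s,0)|$ nonzero near $0$), and $\Im(z) > c$ bounds the remaining factor uniformly. No serious obstacle is anticipated; the only point to watch is that Corollary \ref{cons_module_PI} is available, which requires $\sigma$ to be in the Szeg\H{o} class — an assumption that is in force throughout Section \ref{lower_bound}.
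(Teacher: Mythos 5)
Your proof is correct and uses essentially the same ingredients as the paper: Cauchy–Schwarz on the reproducing kernel, bounding the truncated integral by the full integral, Corollary~\ref{cons_module_PI}, and the relation $|D|^2=(2\pi|\Pi|^2)^{-1}$. The only difference is cosmetic — you arrange the estimate as two factors that visibly cancel the common $\int_0^\infty|P(s,z)|^2\,ds$, whereas the paper carries it through in a single chain of inequalities — so the argument matches.
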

\begin{proof}
\begin{align*}
    | f_r(z)| &= \left| \frac{K_r(0,z)}{K_r(0,0)}\right| =  \left| \frac{\int_0^r \ol{P(s,0)}P(s,z) \,ds}{\int_0^r \left|P(s,0)\right|^2 ds}\right|\le \frac{\left(\int_0^r \left|P(s,0)\right|^2 ds\right)^{\frac{1}{2}}\left(\int_0^r \left|P(s,z)\right|^2 ds\right)^{\frac{1}{2}}}{\int_0^r \left|P(s,0)\right|^2 ds} \le 
    \\
    &\le K_r(0,0)^{-\frac{1}{2}}\left(\frac{|\Pi(z)|^2}{2\Im(z)}  \right)^{\frac{1}{2}} \stackrel{\eqref{szego_function_definition}}{=} K_r(0,0)^{-\frac{1}{2}}\left(\frac{1}{2\Im(z)\cdot 2\pi |D(z)|^2}  \right)^{\frac{1}{2}}
    \\
    &= K_r(0,0)^{-\frac{1}{2}}\left(4\pi \Im(z)  \right)^{-\frac{1}{2}} |D(z)|^{-1}.
\end{align*}
The first inequality on the second line follows from Corollary \ref{cons_module_PI}. Multiplying the last inequality by $|D(z)|$, we obtain the required.
\end{proof}
\begin{lemma}\label{possibility_of_poisson}
For every $z\in \Cm_+$ the following holds: 
\begin{align}\label{kind_of_poisson}
    \frac{1}{2\pi i}\int_{\R}\frac{f_r(x)D(x)}{x - z}\,dx &= f_r(z)D(z),
    \\ \nonumber
    \frac{1}{2\pi i}\int_{\R}\frac{f_r(x)D(x)}{x + z}\,dx &= 0.
\end{align}
\end{lemma}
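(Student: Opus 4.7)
My plan is to derive both identities from the Cauchy reproducing formula in $H^1(\Cm_+)$ applied to the auxiliary function $g(z) := f_r(z)D(z)/(z+i)$. The crucial preliminary observation is that $g \in H^1(\Cm_+)$: by the Paley--Wiener theorem, $f_r \in PW_r$ has spectrum in $[0,r] \subset [0,\infty)$ and therefore extends to an $H^2(\Cm_+)$-function; and from the discussion after the Krein theorem, $[(\lambda+i)\Pi(\lambda)]^{-1} \in H^2(\Cm_+)$, which by \eqref{szego_function_definition} equals $\sqrt{2\pi}\,D(z)/(z+i)$. Since a product of two $H^2(\Cm_+)$-functions always belongs to $H^1(\Cm_+)$, we conclude $g \in H^1(\Cm_+)$.

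Applying the $H^1(\Cm_+)$ Cauchy reproducing formula at $z$ and at $-z$ yields, for every $z \in \Cm_+$,
\begin{align*}
\frac{f_r(z)D(z)}{z+i} &= \frac{1}{2\pi i}\int_{\R}\frac{f_r(x)D(x)}{(x+i)(x-z)}\,dx, \\
0 &= \frac{1}{2\pi i}\int_{\R}\frac{f_r(x)D(x)}{(x+i)(x+z)}\,dx.
\end{align*}
Using the partial fractions
\[
\frac{1}{(x+i)(x-z)} = \frac{1}{z+i}\Bigl[\frac{1}{x-z}-\frac{1}{x+i}\Bigr], \qquad
\frac{1}{(x+i)(x+z)} = \frac{1}{z-i}\Bigl[\frac{1}{x+i}-\frac{1}{x+z}\Bigr],
\]
both displays translate into linear combinations of $\int_{\R} f_r D/(x-z)\,dx$, $\int_{\R} f_r D/(x+z)\,dx$, and the auxiliary constant $C := \int_{\R} f_r(x)D(x)/(x+i)\,dx$. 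Concretely, the second display gives $\int_{\R} f_r D/(x+z)\,dx = C$ for all $z \in \Cm_+\setminus\{i\}$, and by continuity also at $z = i$; so this integral is the same constant $C$ throughout $\Cm_+$.

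To pin $C$ down I would take $z = iY$ with $Y \to \infty$. Because $f_r D \in L^2(\R)$ by \eqref{integral_f_r(x)D(x)}, Cauchy--Schwarz gives
\[
\Bigl|\int_{\R}\frac{f_r(x)D(x)}{x+iY}\,dx\Bigr| \le \|f_r D\|_{L^2(\R)}\cdot \Bigl\|\frac{1}{\cdot+iY}\Bigr\|_{L^2(\R)} = \|f_r D\|_{L^2(\R)}\sqrt{\pi/Y} \to 0
\]
as $Y\to\infty$, whence $C=0$. This is precisely the second identity of the lemma; substituting $C = 0$ into the partial-fraction rewriting of the first display yields the first identity. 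The only nontrivial technical point is the membership $g \in H^1(\Cm_+)$, which I see as a direct consequence of the two $H^2$-facts already recorded earlier in the paper together with the standard inclusion $H^2(\Cm_+) \cdot H^2(\Cm_+) \subset H^1(\Cm_+)$; everything else is partial fractions and one Cauchy--Schwarz estimate.
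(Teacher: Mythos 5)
Your proof is correct, and it takes a genuinely different — and arguably tidier — route than the paper's. The paper confronts the obstacle that $f_r D$ itself need not lie in any Hardy space by regularizing: it introduces $G_{\eps}(z) = f_r(z+\eps i)D(z+\eps i)$, which is bounded in $\Cm_+$ by Lemma~\ref{function_tends_to_zero}, evaluates $\frac{1}{2\pi i}\int_{\R}\frac{2zG_{\eps}(x)}{x^2-z^2}\,dx$ by the residue theorem on a large semicircle, and then passes $\eps\to 0$ using an $L^2$-continuity argument for $D(\cdot+\eps i)/(\cdot+i)$ and $f_r(\cdot+i\eps)$; the only piece of Hardy-space machinery it invokes is that $H^1$ functions integrate to zero over $\R$. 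You sidestep the regularization entirely by dividing by $z+i$: the function $g=f_rD/(\cdot+i)$ is honestly in $H^1(\Cm_+)$ as a product of the two $H^2$ factors $f_r\in PW_r\subset H^2(\Cm_+)$ and $D/(\cdot+i)\in H^2(\Cm_+)$ (both recorded in the paper), so the Cauchy reproducing formula for $H^1$ applies directly, and the rest is partial fractions plus one Cauchy--Schwarz estimate as $z=iY$, $Y\to\infty$, which uses only $f_rD\in L^2(\R)$ from \eqref{integral_f_r(x)D(x)}. The trade-off is that you invoke the $H^1$ reproducing formula rather than merely the vanishing of $\int_{\R}H^1$; both are standard and in the references the paper already cites (Garnett, Koosis). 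One small polish you might add: the ``by continuity at $z=i$'' step uses that $z\mapsto\int_{\R}\frac{f_rD}{x+z}\,dx$ is continuous on $\Cm_+$, which follows because $\frac{1}{\cdot+z}$ varies continuously in $L^2(\R)$ with $z\in\Cm_+$ and $f_rD\in L^2(\R)$.
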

\begin{rem}
    If the function $f_r D$ was in the Hardy space $H^p$ for some $p\ge 1$, the statement of the lemma would be a well-known fact (see p. 116 in \cite{Koosis}). It is not our case but function $f_rD$ satisfies a ``boundedness'' condition from Lemma \ref{function_tends_to_zero}.
\end{rem}
\begin{proof}
We have already mentioned that the function $\frac{c\Pi(z)^{-1}}{z + i} = \frac{D(z)}{z + i}$ is in the Hardy space $H^2$. Hence, for $z\in \Cm_+$ the function $\frac{D(x)}{x + z}$ also belongs to $H^2$. The function $f_r$ belongs to the Paley-Wiener space and consequently to $H^2$. Therefore both integrals in the lemma are absolutely convergent and define analytic functions in $\Cm_+$.  Moreover, $f_r(x)\frac{D(x)}{x + z}$ is a multiplication of two functions from $H^2$ and consequently it belongs to $H^1$. Integral of $H^1$ function over real line equals $0$ (see p. 122 in \cite{Koosis} or Lemma 3.7 in \cite{Garnett}). Thus, the second equality in lemma is proved.

From Lemma \ref{function_tends_to_zero} it follows that the function $G_{\eps}(z) = f_r(z + \eps i)D(z + \eps i)$ is bounded in the upper half-plane $\Cm_+$. Proceeding as above we deduce that for all $z\in \Cm_+$ 
\begin{align*}
    \frac{1}{2\pi i}\int_{\R}\frac{G_{\eps}(x)}{x + z}\,dx &= 0.
\end{align*}
Next,  the following chain of equalities is true:
\begin{align}\label{better_convergence}
    \nonumber
    \frac{1}{2\pi i}\int_{\R}\frac{f_r(x + \eps i)D(x + \eps i)}{x - z}\,dx &= \frac{1}{2\pi i}\int_{\R}\frac{G_{\eps}(x)}{x - z}\,dx 
    \\ 
    =\frac{1}{2\pi i}\int_{\R}\frac{G_{\eps}(x)}{x - z}\,dx&-  \frac{1}{2\pi i}\int_{\R}\frac{G_{\eps}(x)}{x + z}\,dx = \frac{1}{2\pi i}\int_{\R}\frac{2zG_{\eps}(x)}{x^2  -z^2}\,dx. 
\end{align}
The latter integral can be calculated by the calculus of residues. Take a large positive number $R$ and consider a circuit  $C_R$ consisting of the segment $[-R, R]$ and the half-circle $\Gamma_R$ connecting points $R$ and $-R$ in the upper half-plane.
\begin{align}\label{G_eps_into_integrals}
    \nonumber G_{\eps}(z) &= \text{Res}_z\left(\frac{2zG_{\eps}(x)}{x^2  -z^2}\right) = \frac{1}{2\pi i}\ilim_{C_R}\frac{2zG_{\eps}(x)}{x^2  -z^2}dx 
    \\ 
    &=\frac{1}{2\pi i}\ilim_{[-R, R]}\frac{2zG_{\eps}(x)}{x^2  -z^2}dx + \frac{1}{2\pi i}\ilim_{\Gamma_R}\frac{2zG_{\eps}(x)}{x^2  -z^2}dx.
\end{align}
The first term in the latter sum has a limit as $R\to\infty$.
\begin{align*}
    \lim_{R\to\infty} \frac{1}{2\pi i}\ilim_{[-R, R]}\frac{2zG_{\eps}(x)}{x^2  -z^2}\,dx = \frac{1}{2\pi i}\ilim_{\R}\frac{2zG_{\eps}(x)}{x^2  -z^2}\,dx.
\end{align*}
Since the function $G_{\eps}$ is bounded, the second term tends to zero as $R\to\infty$:
\begin{align*}
    \left|\frac{1}{2\pi i}\int_{\Gamma_R}\frac{2zG_{\eps}(x)}{x^2  -z^2}\,dx\right| \le \frac{1}{2\pi}\int_0^{\pi}\left|\frac{2zG_{\eps}(Re^{i\theta})}{R^2e^{2i\theta} - z^2}\right|R\, d\theta \le \frac{R|z|}{R^2 - |z|^2}\sup\nolimits_{\theta}|G_{\eps}(Re^{i\theta})| =O\left(\frac{1}{R}\right).
\end{align*}
We can therefore pass to the limit $R\to\infty$ in the equality \eqref{G_eps_into_integrals} and obtain
\begin{align*}
     \frac{1}{2\pi i}\int_{\R}\frac{2zG_{\eps}(x)}{x^2  -z^2}\,dx = G_{\eps}(z).
\end{align*}
Substituting this into \eqref{better_convergence} we get 
\begin{align*}
    \frac{1}{2\pi i}\int_{\R}\frac{f_r(x + \eps i)D(x + \eps i )}{x - z}\,dx &= G_{\eps}(z)= f_r(z + i\eps)D(z + i\eps).
\end{align*}
To prove equality \eqref{kind_of_poisson} it is only remains to pass to the limit as $\eps$ tends to $0$. The right part of the equality tends to $f_r(z)D(z)$ so we need to evaluate how the left side differs from the integral in the lemma.
\begin{gather*}
    \left|\int_{\R}\frac{f_r(x + \eps i)D(x + \eps i )}{x - z}\,dx - \int_{\R}\frac{f_r(x  )D(x )}{x - z}\,dx\right|=
    \\
    =
    \left|\int_{\R}f_r(x +i\eps )\frac{D(x + \eps i ) - D(x)}{x - z}\,dx + \int_{\R}D(x)\frac{f_r(x +i\eps )- f_r(x)}{x - z}\,dx\right|\le
    \\
    \le \int_{\R}\left|f_r(x +i\eps )\frac{D(x + \eps i ) - D(x)}{x - z}\right|\,dx + \int_{\R}\left|D(x)\frac{f_r(x +i\eps )- f_r(x)}{x - z}\right|\,dx\le
    \\
    \le\norm[f_r(x + \eps i)]_{2}\cdot\norm[\frac{x + i}{x - z}]_{\infty}\norm[\frac{D(x + \eps i) - D(x)}{x + i}]_{2} 
    +\norm[f_r(x + i\eps) - f_r(x)]_{2}\cdot\norm[\frac{x + i}{x - z}]_{\infty}\norm[\frac{D(x)}{x + i}]_{2}
    \\
    \le C\left(\norm[\frac{D(x + \eps i) - D(x)}{x + i}]_{2}  + \norm[f_r(x + i\eps) - f_r(x)]_{2}\right)\le
    \\
    \le C\left(\norm[ \frac{D(x + \eps i )}{x + i}- \frac{D(x + \eps i)}{x + \eps i + i}]_{2} + \norm[\frac{D(x + \eps i)}{x + \eps i + i} - \frac{D(x)}{x + i}]_{2}  + \norm[f_r(x + i\eps) - f_r(x)]_{2}\right)
\end{gather*}
The first term can be bounded in the following way:
\begin{align*}
    \norm[ \frac{D(x + \eps i )}{x + i}- \frac{D(x + \eps i)}{x + \eps i + i}]_{2} = \eps \norm[\frac{D(x + \eps i)}{(x + \eps i + i)(x + i)}]_{2} \le \eps\norm[\frac{D(x)}{x + i}]_{2}\to 0 .
\end{align*}
From a general property of functions in $H^2$ (see Theorem 3.1 in \cite{Garnett} or  Section 19.2 in  \cite{Levin}) it follows that the second and the third terms in the bracket tend to zero as $\eps\to 0$.
\end{proof}
\begin{lemma}\label{lemma 8}
 For a fixed number $b$ an inequality 
\begin{align*}
    |f_r(t)| \le 2\sqrt{2}e\pi |f_r(0)| <  25 |f_r(0)|
\end{align*}
holds for any large enough $r$ and for all $t\in[-\frac{b}{r}, \frac{b}{r}]$.
\end{lemma}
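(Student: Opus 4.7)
My plan is to combine the three previous supporting lemmas with the standing assumption \eqref{trivial_condition} and the non-tangential boundary behaviour of $D$ at $0$. Since $f_r(0)=K_r(0,0)/K_r(0,0)=1$, the lemma reduces to showing $|f_r(t)|\le 2\sqrt{2}e\pi$ on $[-b/r,b/r]$ for large $r$. The natural choice is to lift the problem to $\Cm_+$ by setting $\gamma=1/r$ in Lemma \ref{lemma_with_gamma_for_fa}, which gives the pointwise bound
\begin{equation*}
    |f_r(t)|\le e\,|f_r(t+i/r)|,\qquad t\in\R.
\end{equation*}
The extra height $1/r$ is tuned exactly so that the exponential factor is bounded by $e$.

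Next I estimate $|f_r(t+i/r)|$ via Lemma \ref{function_tends_to_zero} at the point $z=t+i/r$:
\begin{equation*}
    |f_r(t+i/r)|\le K_r(0,0)^{-1/2}\bigl(4\pi/r\bigr)^{-1/2}|D(t+i/r)|^{-1}.
\end{equation*}
The standing assumption \eqref{trivial_condition} reads $r/K_r(0,0)=rm_r(\sigma,0)<2\pi|D(0)|^2$, hence $K_r(0,0)^{-1/2}<\sqrt{2\pi/r}\,|D(0)|$. Substituting this yields
\begin{equation*}
    |f_r(t+i/r)|<\sqrt{2\pi/r}\,|D(0)|\cdot\sqrt{r/(4\pi)}\,|D(t+i/r)|^{-1}=\frac{1}{\sqrt{2}}\cdot\frac{|D(0)|}{|D(t+i/r)|}.
\end{equation*}
Combining with the lift gives $|f_r(t)|<\tfrac{e}{\sqrt 2}\,|D(0)|/|D(t+i/r)|$, so it remains to show that $|D(t+i/r)|\ge |D(0)|/(4\pi)$ for $|t|\le b/r$ and all sufficiently large $r$.

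For this last step I use the standing assumption that $D$ has a non-tangential boundary value at $z=0$. For $t\in[-b/r,b/r]$ the point $t+i/r$ lies in the Stolz angle $\{x+iy\in\Cm_+:|x|\le b\,y\}$ and satisfies $t+i/r\to 0$ as $r\to\infty$; the non-tangential limit thus gives $D(t+i/r)\to D(0)$ uniformly in $|t|\le b/r$. Since \eqref{trivial_condition} forces $|D(0)|>0$ (otherwise its right-hand side vanishes, contradicting $K_r(0,0)<\infty$), for large $r$ we indeed have $|D(t+i/r)|\ge |D(0)|/(4\pi)$, and multiplying through produces $|f_r(t)|\le 2\sqrt{2}e\pi<25$.

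The only delicate point is ensuring the uniform lower bound on $|D(t+i/r)|$; here the hypothesis that $0$ is a non-tangential boundary point of the outer Szeg\H{o} function is used in an essential way, together with the observation that the slope $t/(1/r)=rt$ remains bounded by $b$ on the range of $t$ under consideration. Everything else is a direct assembly of Lemma \ref{lemma_with_gamma_for_fa}, Lemma \ref{function_tends_to_zero}, and inequality \eqref{trivial_condition}.
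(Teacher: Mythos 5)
Your argument is correct and follows the same overall strategy as the paper: lift to height $1/r$ via Lemma~\ref{lemma_with_gamma_for_fa}, bound $|f_r(t+i/r)|$ using the $L^2$ control coming from $m_r(\sigma,0)$ and assumption~\eqref{trivial_condition}, and finish with the non-tangential convergence of $D$ at~$0$. The one real difference is in the middle step: you invoke Lemma~\ref{function_tends_to_zero} (Cauchy--Schwarz on the reproducing kernel $K_r$ together with Corollary~\ref{cons_module_PI}), whereas the paper instead applies Lemma~\ref{possibility_of_poisson} to write $f_rD$ as a Cauchy integral and then uses Cauchy--Schwarz and an explicit evaluation of $\int dx/(x^2+r^{-2})$. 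When the $1/(2\pi i)$ normalization in Lemma~\ref{possibility_of_poisson} is tracked carefully, the paper's route gives exactly your intermediate bound $|f_r(t+i/r)|\le\frac{1}{\sqrt2}\,|D(0)|/|D(t+i/r)|$, so the two routes are quantitatively equivalent; your choice of Lemma~\ref{function_tends_to_zero} is arguably cleaner since it avoids re-deriving the $L^2$ estimate that lemma already encapsulates. Two small notational points: the lemma is stated homogeneously, so the observation $f_r(0)=1$, while true, is not needed; and the reason \eqref{trivial_condition} forces $D(0)\neq0$ is that $m_r(\sigma,0)=K_r(0,0)^{-1}>0$, not the finiteness of $K_r(0,0)$ — but the conclusion is right, and in any case $D(0)\neq 0$ is already one of the standing almost-everywhere assumptions on the chosen Lebesgue point.
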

\begin{proof}
Point $t + i/r$ lies in a cone $K = \{z\colon b|\Im(z)| \ge \Re(z)\}$. By assumption, $D$ has a non-tangential boundary value at the point $0$ and $D(0) \neq 0$. Hence, $D$ is continuous at the point $0$ inside cone $K$ and for a large enough $r$ we have
\begin{align}\label{ineq_for_d_near_boundary}
    \left|D\left(t + \frac{i}{r}\right)\right| > \frac{|D(0)|}{2}.
\end{align}
Lemma \ref{possibility_of_poisson} gives
\begin{align*}
    \left|f_r\left(t + \frac{i}{r}\right)D\left(t + \frac{i}{r}\right)\right|^2 &= \left|\int_{\R}\frac{f_r(x)D(x)}{x -\left(t+  \frac{i}{r}\right)}\,dx\right|^2\le\\
    &\le \int_{\R}\left|f_r(x)D(x)\right|^2 \,dx\int_{\R}\frac{1}{x^2 + \frac{1}{r^2}}\,dx= \\
    &\stackrel{\eqref{integral_f_r(x)D(x)}}{=} m_r(0)|f_r(0)|^2\cdot \pi r \stackrel{\eqref{trivial_condition}}{\le}  2\pi^2|D(0)f_r(0)|^2.
\end{align*}
Hence, 
\begin{align*}
    |f_r(0)| > \frac{1}{\sqrt{2}\pi}\frac{\left|D\left(t + \frac{i}{r}\right)\right|}{|D(0)|}\left|f_r\left(t + \frac{i}{r}\right)\right|.
\end{align*}
Applying Lemma \ref{lemma_with_gamma_for_fa} with $\gamma = \frac{1}{r}$ to bound $\left|f_r\left(t + \frac{i}{r}\right)\right|$ and inequality \eqref{ineq_for_d_near_boundary} we get
\begin{align*}
    |f_r(0)| > \frac{1}{\sqrt{2}\pi}\frac{1}{2} e^{-1}|f_r(t)| >\frac{1}{25}|f_r(t)|.
\end{align*}
This concludes the proof.
\end{proof}
\subsubsection{End of the proof}
\noindent Recall that our aim is the proof of  \eqref{most_wanted}. Let $H_r$ be a function defined by
\begin{align*}
    H_r(x) = \frac{e^{-irx} -1 }{x} .
\end{align*}
\begin{st}
For every positive $\delta$ the following holds:
\begin{align}
    \ilim_{-\infty}^{\infty}\left|H_r(x - \delta i)\right|^2\,dx < 2\pi r.
\end{align}
\begin{proof}
To do the calculations recall the formula of the Fourier transform of $\frac{1}{x^2 + 1}$.
\begin{align*}
    \ilim_{-\infty}^{\infty}\frac{e^{iwx}}{x^2 + 1}\,dx = \pi e^{-|w|}.
\end{align*}
With that in mind, the integral can be easily evaluated.
\begin{align*}
     \ilim_{-\infty}^{\infty}\left|H_r(x - \delta i)\right|^2\,dx &=  \ilim_{-\infty}^{\infty}\frac{\left|e^{-r\delta}e^{irx} - 1\right|^2}{x^2 + \delta^2}\,dx = \ilim_{-\infty}^{\infty}\frac{e^{-2r\delta} + 1 - e^{-r\delta}(e^{irx} + e^{-irx})}{x^2 + \delta^2} \,dx
     \\
     &=\frac{1}{\delta}\left[(e^{-2r\delta} + 1)\ilim_{-\infty}^{\infty}\frac{\,dy}{y^2 + 1} - e^{-r\delta}\ilim_{-\infty}^{\infty}\frac{e^{ir\delta y} + e^{-ir\delta y}}{y^2 + 1}\,dy\right] 
     \\
     &=\frac{1}{\delta}\left[(e^{-2r\delta} + 1)\pi - 2e^{-r\delta}\pi e^{-r\delta}\right] = \pi\frac{1 - e^{-2r\delta}}{\delta} < 2\pi r
\end{align*}
\end{proof}
\end{st}
Put $\delta = \frac{\eps}{r}$. The following inequalities hold:
\begin{align*}
    \left|\ilim_{-\infty}^{\infty} f_r(x)H_r(x - \delta i) D(x)\,dx\right|^2 &\le \ilim_{-\infty}^{\infty}\left|f_r(x) D(x)\right|^2\,dx \ilim_{-\infty}^{\infty}\left|H_r(x - \delta i)\right|^2\,dx
    \\
    &\stackrel{\eqref{integral_f_r(x)D(x)}}{=} m_r(0)|f_r(0)|^2\ilim_{-\infty}^{\infty}\left|H_r(x - \delta i)\right|^2\,dx\le
    \\
    & \le m_r(0)|f_r(0)|^2 2\pi r.
\end{align*}
Hence,
\begin{align}\label{first_ineq_with_H}
    \left|\ilim_{-\infty}^{\infty}f_r(x) H_r(x - \delta i) D(x)\,dx\right|&\le|f_r(0)|\sqrt{2\pi r m_r(0)}
\end{align}
Consider quantities $I, I_1, I_2$, defined by the equality
\begin{align*}
    I = \ilim_{-\infty}^{\infty}f_r(x) H_r(x - \delta i) D(x)\,dx& = \ilim_{-\infty}^{\infty} \frac{e^{-ir(x - \delta i)}f_r(x)D(x)}{x -  \delta i} \,dx - \ilim_{-\infty}^{\infty} \frac{f_r(x)D(x)}{x - \delta i} \,dx = e^{-r\delta}I_2 - I_1.
\end{align*}
Lemma  \ref{possibility_of_poisson} gives
\begin{align}\label{calculated_i1}
    I_1 &= \ilim_{-\infty}^{\infty} \frac{f_r(x)D(x)}{x - \delta i} dx = 2\pi i f_r(\delta i)D(\delta i).
\end{align}
Our further goal will be an estimation 
\begin{align}\label{needed_ineq_for_i_2}
    |I_2| < 4\eps f_r(0)D(0).
\end{align}
Before proving this let us show how \eqref{needed_ineq_for_i_2} can be used to establish inequality \eqref{most_wanted}. Notice that
\begin{align*}
    |I| = |e^{-r\delta}I_2 - I_1|\ge |I_1| - e^{-r\delta}|I_2| \ge |I_1| - |I_2|.
\end{align*}
Substitution of \eqref{calculated_i1},
 \eqref{first_ineq_with_H}, \eqref{needed_ineq_for_i_2} into the last inequality gives
\begin{align}\label{ineq_before_the_end}
    |f_r(0)|\sqrt{2\pi r m_r(0)} \ge 2\pi|f_r(\delta i)D(\delta i)| - 4\eps |f_r(0)D(0)|.
\end{align}
Since the function $D$ has non-tangential limit at the point $0$,  for a large enough $r$
\begin{align}\label{21}
    |D(i\delta)| =  \left|D\left(i\frac{\eps}{r}\right)\right|> (1 - \eps)|D(0)|.
\end{align}
Moreover, from Lemma \ref{lemma_with_gamma_for_fa} we know
\begin{align}\label{22}
    |f_r(\delta i)| \ge e^{-r\delta}|f_r(0)| = e^{-\eps}|f_r(0)|\ge (1 -\eps)|f_r(0)|.
\end{align}
Finally, the substitution of \eqref{21} and \eqref{22} into  inequality \eqref{ineq_before_the_end} finishes the proof:
\begin{align*}
    |f_r(0)|\sqrt{2\pi r m_r(0)} &\ge 2\pi (1 - \eps)(1-\eps)|f_r(0)D(0)| - 4\eps|f_r(0)D(0)|  \ge \\
    &\ge (2\pi - (4 + 4\pi)\eps + 2\pi\eps^2)|f_r(0)D(0)| > (2\pi - 20\eps)|f_r(0)D(0)|,\\
    r m_r(0)&\ge \frac{(2\pi - 20\eps)^2}{2\pi}|D(0)|^2 > (2\pi - 100\eps)|D(0)|^2.
\end{align*}
Thus, it is only remains to prove estimate \eqref{needed_ineq_for_i_2}. 
Consider one more integral $\Tilde{I_2}$.
\begin{align*}
    \Tilde{I_2} = \ilim_{-\infty}^{\infty} \ol{\left(\frac{e^{-irx}f_r(x)}{x -  \delta i}\right)}D(x) \,dx = \ilim_{-\infty}^{\infty} \frac{e^{irx}\ol{f_r(x)}}{x +  \delta i}D(x) \,dx.
\end{align*}
The function $f_r$ is in $PW_r$ so there exists $\phi\in L^2[0, r]$ such that $f_r(x) = \int_0^r \phi(t) e^{itx}dt$. Notice that
\begin{align*}
    e^{irx}\ol{f_r(x)} &= e^{irx}\int_0^r \ol{\phi(t)} e^{-itx}dt = \int_0^r \ol{\phi(t)} e^{i(r - t)x}dt = \int_0^r \ol{\phi(r - t)} e^{itx}dt.
\end{align*}
Hence, the function $e^{irx}\ol{f_r(x)}$ also belongs to $PW_r$ and consequently to the Hardy space $H^2$ in the upper half-plane. As we have mentioned before, the function $\frac{D(x)}{x + \delta i}$ also belong to $H^2$. So 
\begin{align*}
    \frac{e^{irx}\ol{f_r(x)}D(x)}{x + \delta i}\in H^1.
\end{align*}
An integral of a $H^1$ function over the real line equals $0$ (see Lemma 3.7 in \cite{Garnett}), therefore,  $\Tilde{I_2} = 0$.
By definition, put
\begin{align*}
    K_1 = \left\{x\in \R\colon |x| > \frac{b}{r}\right\},\quad K_2 = \R\setminus K_1.
\end{align*}
Split  $I_2$ into two terms and consider them separately.
\begin{align*}
    I_2 = \int_{-\infty}^{\infty} \frac{e^{-irx}f_r(x)}{x -  \delta i}D(x) \,dx =  \int_{K_1} \frac{e^{-irx}f_r(x)}{x -  \delta i}D(x) \,dx + \int_{K_2} \frac{e^{-irx}f_r(x)}{x -  \delta i}D(x) \,dx = I_{21} + I_{22}.
\end{align*}
Similarly, $\Tilde I_2$  is represented as a sum of $\Tilde I_{21}$ and $\Tilde I_{22}$. The following identity is true:
\begin{align}\label{identity_main}
    \nonumber
    I_2/D(0) = I_2/D(0) - \ol{\Tilde I_2/D(0)}&= I_{21}/D(0) + \left(I_{22}/D(0) - \int_{K_2} \frac{e^{-irx}f_r(x)}{x -  \delta i} \,dx\right)  + 
    \\ 
    &- \ol{{\Tilde I_{21}/D(0)}} - \ol{\left(\Tilde I_{22}/D(0) - \int_{K_2}\ol{\left( \frac{e^{-irx}f_r(x)}{x -  \delta i}\right)} \,dx\right)}. 
\end{align}
Let us bound every term in the right side.
\begin{align*}
    |I_{21}|^2 = \left|\int_{K_1 }\frac{e^{-irx }f_r(x)D(x)}{x -  \delta i} \,dx \right|^2&\le \int_{K_1}\left|f_r(x)D(x)\right|^2 dx\int_{K_1 }\left|\frac{e^{-irx}}{x -  \delta i}\right|^2 dx\\
    \int_{K_1}\left|f_r(x)D(x)\right|^2 dx &\le\int_{\R}\left|f_r(x)D(x)\right|^2 dx \stackrel{\eqref{integral_f_r(x)D(x)}}{=} m_r(0)|f_r(0)|^2\\
    \int_{K_1 }\left|\frac{e^{-irx}}{x -  \delta i}\right|^2 dx &=   \ilim_{|x| > \frac{b}{r}}\frac{1}{x^2 +  \delta^2}\,dx \le 2\int_{\frac{b}{r}}^{\infty}\frac{dx}{x^2} = 2r/b.
\end{align*}
Thus, 
\begin{align*}
    |I_{21}|^2\le m_r(0)|f_r(0)|^2\frac{2r}{b} &\le \frac{2}{b}|f_r(0)|^2\left(r m_r(0)\right)\stackrel{\eqref{trivial_condition}}{\le} \frac{4\pi}{b}|f_r(0)D(0)|^2,
    \\
    \frac{|I_{21}|}{|D(0)|}&\le \sqrt{\frac{4\pi}{b}}|f_r(0)|.
\end{align*}
Choose  $b$ so large that $\frac{4\pi}{b}\le \eps^2$ holds. Taking this into account, we obtain 
\begin{align}\label{first_part}
    \frac{|I_{21}|}{|D(0)|}\le \eps|f_r(0)|.
\end{align}
Next, we bound the second term in \eqref{identity_main}. 
\begin{align*}
    \nonumber\left|\frac{I_{22}}{D(0)} - \int_{K_2} \frac{e^{-irx}f_r(x)}{x -  \delta i} \,dx\right| &= \frac{1}{|D(0)|}\left|\int_{K_2} \frac{e^{-irx}f_r(x)}{x -  \delta i}D(x) \,dx - \int_{K_2} \frac{e^{-irx}f_r(x)}{x -  \delta i} D(0)\,dx\right|\le
    \\&\le \frac{1}{|D(0)|}\sup_{K_2}\left|\frac{f_r(x)}{x -\delta i}\right|\int_{K_2} |D(x) - D(0)|\,dx.
\end{align*}
From Lemma $\ref{lemma 8}$ it follows that $|f_r(x)| < 25 |f_r(0)|$ for any $x\in K_2$. Since $0$ is a Lebesgue point of $D$ (inequality \eqref{0_is_lebesgue_point} holds), we can choose $r$ so large that
\begin{align*}
    \ilim_{K_2} |D(x) - D(0)|\,dx = \ilim_{|x| <\frac{b}{r}} |D(x) - D(0)|\,dx <\frac{b}{r} \frac{\eps^2}{25b}|D(0)|.
\end{align*}
From the two latter inequalities we get 
\begin{align}\label{third_part}
    \left|\frac{I_{22}}{D(0)} - \int_{K_2} \frac{e^{-irx}f_r(x)}{x -  \delta i} \,dx\right|&\le \frac{1}{|D(0)|}\cdot\frac{25|f_r(0)|}{\delta}\cdot\frac{b}{r} \frac{\eps^2}{25b}|D(0)| = \frac{\eps^2}{\delta r} |f_r(0)|= \eps |f_r(0)|.
\end{align}
The two remaining terms in \eqref{identity_main} can be bounded in a similar way. Substitution of \eqref{first_part}, \eqref{third_part} and two similar inequalities for $\Tilde I_2$ into identity \eqref{identity_main} establishes required estimate \eqref{needed_ineq_for_i_2}.
\section{Proof of Theorem \ref{Teplyaev_average_convergence}}\label{proof of Teplyaev conjecture}
We begin by proving the average convergence for modules of the continuous orthogonal polynomials.
\begin{lemma}
Assume that the coefficient $a$ of Krein system \eqref{diff_system} belongs to  $L^1_{\loc}(\R_+)$ and the associated spectral measure $\sigma$ belongs to the Szeg\H{o} class on the real line. Then for every $\lambda\in\Cm_+$ we have
\begin{align*}
    \lim_{r\to\infty} \frac{1}{r}\int_0^r\left| P_*(\rho,\lambda)\right|\,d\rho = \left|\Pi(\lambda)\right|.
\end{align*}
\end{lemma}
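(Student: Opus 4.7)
The plan is to leverage the second Christoffel--Darboux identity \eqref{CD_module} together with the fact that $P(\cdot,\lambda)\in L^2(\R_+)$ for every $\lambda\in\Cm_+$ (which follows from Corollary \ref{cons_module_PI}, since the Szeg\H{o} assumption forces $|\Pi(\lambda)|<\infty$). For a fixed $\lambda\in\Cm_+$, set
$$g(\rho)=2\Im(\lambda)\int_0^\rho |P(s,\lambda)|^2\,ds.$$
Then \eqref{CD_module} reads $|P_*(\rho,\lambda)|^2=|P(\rho,\lambda)|^2+g(\rho)$, while Corollary \ref{cons_module_PI} gives $g(\rho)\to|\Pi(\lambda)|^2$ as $\rho\to\infty$.

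The key step is the elementary two-sided estimate obtained from the subadditivity and monotonicity of the square root:
$$\sqrt{g(\rho)}\ \le\ |P_*(\rho,\lambda)|\ =\ \sqrt{|P(\rho,\lambda)|^2+g(\rho)}\ \le\ |P(\rho,\lambda)|+\sqrt{g(\rho)}.$$
Integrating from $0$ to $r$ and dividing by $r$, I obtain
$$\frac{1}{r}\int_0^r \sqrt{g(\rho)}\,d\rho\ \le\ \frac{1}{r}\int_0^r |P_*(\rho,\lambda)|\,d\rho\ \le\ \frac{1}{r}\int_0^r |P(\rho,\lambda)|\,d\rho + \frac{1}{r}\int_0^r \sqrt{g(\rho)}\,d\rho.$$

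Finally I pass to the limit $r\to\infty$ in both bounds. Since $g$ is increasing, continuous, and bounded with limit $|\Pi(\lambda)|^2$, the function $\rho\mapsto\sqrt{g(\rho)}$ has pointwise limit $|\Pi(\lambda)|$, so by the standard Ces\`aro property of convergent functions its Ces\`aro mean has the same limit. The error term is controlled by the Cauchy--Schwarz inequality:
$$\frac{1}{r}\int_0^r |P(\rho,\lambda)|\,d\rho\ \le\ \left(\frac{1}{r}\int_0^r |P(\rho,\lambda)|^2\,d\rho\right)^{1/2}\ \le\ r^{-1/2}\,\|P(\cdot,\lambda)\|_{L^2(\R_+)}\ \longrightarrow\ 0,$$
where the last factor is finite by Corollary \ref{cons_module_PI}. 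Squeezing both bounds gives $\frac{1}{r}\int_0^r |P_*(\rho,\lambda)|\,d\rho\to|\Pi(\lambda)|$, which completes the proof. The argument is essentially a direct consequence of the Christoffel--Darboux formula and the Szeg\H{o} integrability of $P(\cdot,\lambda)$, so I do not expect any serious obstacle; the only point to verify carefully is the Ces\`aro convergence of $\sqrt{g(\rho)}$, which is immediate from its pointwise limit and boundedness.
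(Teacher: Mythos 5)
Your proof is correct and reaches the same conclusion from the same two ingredients the paper uses — the second Christoffel--Darboux identity \eqref{CD_module} and the $L^2$-summability of $P(\cdot,\lambda)$ from Corollary \ref{cons_module_PI} — but the bookkeeping is genuinely different and arguably cleaner. The paper splits the statement into a $\liminf$ and a $\limsup$ inequality and treats them by different means: for the lower bound it shows $\liminf_{r\to\infty}|P_*(r,\lambda)|=|\Pi(\lambda)|$ by taking $\liminf$ directly in \eqref{CD_module}, then Ces\`aro-averages; for the upper bound it averages \eqref{CD_module} itself, rewrites the resulting double integral $\frac{1}{r}\int_0^r\int_0^\rho|P(s,\lambda)|^2\,ds\,d\rho$ via Fubini-type estimates, and finally applies Cauchy--Schwarz to pass from $\frac{1}{r}\int_0^r|P_*|^2$ to $\frac{1}{r}\int_0^r|P_*|$. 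You instead keep a single pointwise two-sided bound
\begin{align*}
\sqrt{g(\rho)}\ \le\ |P_*(\rho,\lambda)|\ \le\ |P(\rho,\lambda)|+\sqrt{g(\rho)},
\end{align*}
obtained from subadditivity of the square root, so that both sides are handled by one Ces\`aro-convergence observation plus a single Cauchy--Schwarz estimate for the vanishing error term $\frac{1}{r}\int_0^r|P(\rho,\lambda)|\,d\rho\le r^{-1/2}\|P(\cdot,\lambda)\|_{L^2}$. This avoids the double-integral manipulation entirely and makes the lower bound more transparent (the paper's phrase ``by direct integration'' is compressed; your monotone minorant $\sqrt{g(\rho)}\uparrow|\Pi(\lambda)|$ spells it out). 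The trade-off is that the paper's route also delivers the explicit quantitative bound \eqref{ineq_for_F_r_lemma}, $\frac{1}{r}\int_0^r|P_*(\rho,\lambda)|\,d\rho\le\sqrt{1+\frac{1}{2r\Im\lambda}}\,|\Pi(\lambda)|$, which is reused verbatim later as inequality \eqref{F_r_module_inequality} in the proof of Theorem \ref{Teplyaev_average_convergence}; your version would need a one-line supplement to recover that uniform estimate.
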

\begin{proof}
The required equality is equivalent to two inequalities 
\begin{align}
    \label{module_liminf}\liminf_{r\to\infty} \frac{1}{r}\int_0^r\left| P_*(\rho,\lambda)\right|\,d\rho \ge \left|\Pi(\lambda)\right|,
    \\
    \label{module_limsup}\limsup_{r\to\infty} \frac{1}{r}\int_0^r\left| P_*(\rho,\lambda)\right|\,d\rho \le \left|\Pi(\lambda)\right|.
\end{align}
Take the limit inferior for $r$ in Christoffel-Darboux formula \eqref{CD_module}.
\begin{align*}
    \liminf_{r\to\infty} \left| P_*(r,\lambda)\right|^2 &= \liminf_{r\to\infty}\left(\left|P(r,\lambda)\right|^2  + 2 \Im(\lambda)\int_0^r \left|P(s,\lambda)\right|^2ds\right)
\end{align*}
Part (b) of the Krein theorem states that the function $P(\cdot,\lambda)$ belongs to  $ L^2(\R)$. Hence,
\begin{align*}
    \liminf_{r\to\infty} \left|P(r,\lambda)\right|^2 = 0.
\end{align*}
The second term tends to the integral from $0$ to $\infty$; by Corollary \ref{cons_module_PI} this integral is equal to $|\Pi(\lambda)|^2$. Thus,
\begin{align*}
    \liminf_{r\to\infty} \left| P_*(r,\lambda)\right|^2 &= |\Pi(\lambda)|^2,\\
    \liminf_{r\to\infty} \left| P_*(r,\lambda)\right| &= |\Pi(\lambda)|.
\end{align*}
Inequality \eqref{module_liminf} follows from the last equality by the direct integration. 
To prove \eqref{module_limsup},  integrate and divide by $r$  Christoffel-Darboux formula \eqref{CD_module}.
\begin{align*}
    \frac{1}{r}\int_0^r\left| P_*(\rho,\lambda)\right|^2d\rho &= \frac{1}{r}\int_0^r\left( \left|P(\rho,\lambda)\right|^2  + 2 \Im(\lambda)\int_0^{\rho} \left|P(s,\lambda)\right|^2ds \right)d\rho 
    \\ 
    &= \frac{1}{r}\int_0^r \left|P(\rho,\lambda)\right|^2d\rho + 2\Im(\lambda)  \frac{1}{r}\int_0^{r}\int_0^{\rho} \left|P(s,\lambda)\right|^2ds\,d\rho 
    \\
    &\le \frac{1}{r}\int_0^{\infty} \left|P(\rho,\lambda)\right|^2d\rho + 2\Im(\lambda)  \frac{1}{r}\int_0^{r}\int_0^{\infty} \left|P(s,\lambda)\right|^2ds\,d\rho
    \\
    &= \left(\frac{1}{2r\Im(\lambda)}+1 \right) 2\Im(\lambda)  \int_0^{\infty} \left|P(s,\lambda)\right|^2ds \stackrel{\text{Cor.} \ref{cons_module_PI}}{=} \left(\frac{1}{2r\Im(\lambda)}+1 \right) |\Pi(\lambda)|^2
\end{align*}
Now, the Cauchy-Schwarz inequality gives
\begin{align}\label{ineq_for_F_r_lemma}
    \frac{1}{r}\int_0^r\left| P_*(\rho,\lambda)\right|\,d\rho\le \sqrt{\frac{1}{r}\int_0^r\left| P_*(\rho,\lambda)\right|^2d\rho} \le \sqrt{1 + \frac{1}{2r\Im(\lambda)}}|\Pi(\lambda)|.
\end{align}
Passing to the limit superior on $r$ concludes the proof.
\end{proof}
\begin{corol}\label{equality_in_i}
Suppose that the spectral measure $\sigma$ belongs to the Szeg\H{o} class and the function $a$ is real-valued. Then
\begin{align*}
    \lim_{r\to\infty} \frac{1}{r}\int_0^r P_*(\rho,i)\,d\rho = \Pi(i).
\end{align*}
\begin{proof}
Substituting $i$ for $\lambda$ in \eqref{Pi_definition}, we obtain
\begin{align*}
    \Pi(i) = \frac{1}{\sqrt{2\pi}}\exp\left[\frac{1}{2\pi i}\int_{-\infty}^{\infty}\frac{(1 + si)\ln \sigma'(s)}{(i - s)(1 + s^2)}\,ds\right] = \frac{1}{\sqrt{2\pi}}\exp\left[\frac{-1}{2\pi }\int_{-\infty}^{\infty}\frac{\ln \sigma'(s)}{(1 + s^2)}\,ds\right].
\end{align*}
Hence, $\Pi(i)$  is a positive real number. System \eqref{diff_system} with $\lambda = i$ has real coefficients, real initial condition and consequently the solution $P_*(r,i)$ is real for every $r$. By Corollary \ref{no_zeros}, $P_*(r,i)$ does not equal zero for $r > 0$; $P_*(0,i) > 0$. Therefore, $P_*(r,i)$ must be positive for every $r\ge 0$ and
\begin{align*}
    \lim_{r\to\infty} \frac{1}{r}\int_0^r P_*(\rho,i)\,d\rho = \lim_{r\to\infty} \frac{1}{r}\int_0^r \left|P_*(\rho,i)\right|\,d\rho = \left|\Pi(i)\right| =  \Pi(i).
\end{align*}
\end{proof}
\end{corol}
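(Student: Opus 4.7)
The plan is to exploit two special features of $\lambda = i$ to reduce the corollary to the preceding lemma. First I would verify that $\Pi(i)$ is a positive real number by substituting $\lambda = i$ into the explicit formula \eqref{Pi_definition}. A short computation gives
\[
\frac{1}{s-i} - \frac{s}{s^2+1} = \frac{i}{s^2+1},
\]
so the exponent in \eqref{Pi_definition} collapses to $\frac{1}{2\pi}\int_{\R}\frac{\log\sigma'(s)}{s^2+1}\,ds$, a real integral (convergent by the Szeg\H{o} condition \eqref{Szego_main_condition}). Hence $\Pi(i)\in\R_{>0}$, and in particular $|\Pi(i)| = \Pi(i)$.

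Next I would show that $P_*(\rho,i)$ is positive for all $\rho\ge 0$. Writing system \eqref{diff_system} at $\lambda = i$ under the assumption that $a$ is real-valued gives
\[
\partial_\rho P = -P - aP_*,\qquad \partial_\rho P_* = -aP,
\]
with real initial data $P(0,i) = P_*(0,i) = 1$. Since the system has real coefficients and real initial conditions, uniqueness forces $P(\rho,i)$ and $P_*(\rho,i)$ to be real for every $\rho\ge 0$. Combined with Corollary \ref{no_zeros} (which says $P_*(\rho,\cdot)$ has no zeros in $\overline{\Cm_+}$), continuity in $\rho$ and $P_*(0,i)=1>0$ yield $P_*(\rho,i) > 0$ for all $\rho\ge 0$, so $|P_*(\rho,i)| = P_*(\rho,i)$.

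With both signs settled, the corollary follows immediately from the lemma established just above: applying it to $\lambda = i$,
\[
\lim_{r\to\infty}\frac{1}{r}\int_0^r P_*(\rho,i)\,d\rho \;=\; \lim_{r\to\infty}\frac{1}{r}\int_0^r |P_*(\rho,i)|\,d\rho \;=\; |\Pi(i)| \;=\; \Pi(i).
\]

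The only non-routine step is the positivity argument for $P_*(\rho,i)$; the main obstacle is making sure that one genuinely needs the nonvanishing from Corollary \ref{no_zeros} to rule out sign changes, since otherwise the real-valuedness alone only gives $|P_*(\rho,i)|=\pm P_*(\rho,i)$. The hypothesis that $a$ is real-valued enters exactly here, and without it one cannot remove the absolute value, which is consistent with the remark after Theorem \ref{Teplyaev_average_convergence} that the Ces\`aro convergence can fail in general.
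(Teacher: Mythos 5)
Your proposal is correct and follows essentially the same route as the paper's proof: compute $\Pi(i)$ from \eqref{Pi_definition} to see it is a positive real, observe that real coefficients and real initial data make $P_*(\cdot,i)$ real-valued, use Corollary~\ref{no_zeros} together with $P_*(0,i)=1$ and continuity to conclude positivity, and then invoke the preceding lemma. (Minor note: your simplification $\frac{1}{s-i}-\frac{s}{s^2+1}=\frac{i}{s^2+1}$ is the correct one; the paper's displayed intermediate expression carries a spurious sign, which is harmless since only the realness of the exponent matters.)
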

\noindent Now we can proceed directly to the proof of Theorem \ref{Teplyaev_average_convergence}.
\begin{proof}[Proof of Theorem \ref{Teplyaev_average_convergence}]
Consider a family $\mathcal{F}$ of the analytic functions
\begin{align*}
    F_r(\lambda) = \frac{1}{r}\int_0^r P_*(\rho,\lambda)\,d\rho,\quad r > 1.
\end{align*}
Notice that
\begin{align}\label{F_r_module_inequality}
    |F_r(\lambda)| = \left|\frac{1}{r}\int_0^r P_*(\rho,\lambda)\,d\rho\right| \le \frac{1}{r}\int_0^r \left|P_*(\rho,\lambda)\right|\, d\rho \stackrel{\eqref{ineq_for_F_r_lemma}}{\le} \sqrt{1 + \frac{1}{2r \Im(\lambda)}}|\Pi(\lambda)|.
\end{align}
Consider an arbitrary compact set $D\subset \Cm_+$. $\Im(\lambda)$ is separated from $0$ in $D$ so
\begin{align*}
    \sup_{\lambda\in D, r}|F_r(\lambda)| \le \sup_{\lambda\in D}c(\lambda)|\Pi(\lambda)| < \infty.
\end{align*}
Hence, the family $\mathcal{F}$ is uniformly bounded on compact subsets and, by Montel theorem, every subset of $\mathcal{F}$ has a convergent subsequence.
Let a sequence $r_n\to\infty$ be such that a sequence $F_{r_n}$ is convergent and $\lim F_{r_n} = \tilde F$.
Inequality \eqref{F_r_module_inequality} gives
\begin{align*}
    |\tilde F(\lambda)| = \lim_{n\to\infty} |F_{r_n}(\lambda)|\le|\Pi(\lambda)|
\end{align*}
for every point $\lambda\in\Cm_+$. On the other hand, from corollary \ref{equality_in_i} it follows that
\begin{align*}
    \tilde F(i) = \lim_{n\to\infty} F_{r_n}(i) = \lim_{n\to\infty}\frac{1}{r_n}\int_0^{r_n} P_*(\rho,i)\,d\rho = \Pi(i).
\end{align*}
Applying the maximum modulus principle, we conclude that $\tilde F(\lambda)=\Pi(\lambda)$ for every $\lambda\in\Cm_+$.
Any partial limit of $F_r$ as $r\to\infty$ coincides with  $\Pi$, so $F_r$ has a limit equal to $\Pi$. This completes the proof.

\end{proof}
\section{Acknowledgements}
The author is grateful to Roman Bessonov for helpful discussions and  constant attention to this work.

\bibliographystyle{plain}
\bibliography{references.bib}

\end{document}